%% file: main.tex
\documentclass{article}
\usepackage[round]{natbib}
\usepackage[colorlinks=true, linkcolor=blue, citecolor=blue, urlcolor=blue,pagebackref=true]{hyperref}
\usepackage[a4paper, margin=2.5cm,top=3cm, bottom=3cm]{geometry}
\input{preamble}

\title{Maker Breaker Games on a Budget}

\author{Sebastian Lüderssen\thanks{TU Wien, Vienna, Austria}\and Fabien Nießen\thanks{KTH Royal Institute of Technology, Digital Futures, Stockholm, Sweden.}\and Silas Rathke\thanks{Fachbereich Mathematik und Informatik, Freie Universität Berlin, Arnimallee 3, 14195 Berlin, Germany.\\ \textit{Correspondence:} \href{mailto:s.rathke@fu-berlin.de}{\texttt{s.rathke@fu-berlin.de}}.}}

\begin{document}

\maketitle

\input{Chapter/0_abstract}

\smallskip

\noindent\textbf{2020 Mathematics Subject Classification.}
Primary 05C57; Secondary 91A24, 91A46.

\input{Chapter/1_introduction}

\input{Chapter/2_budget_triangle}
\input{Chapter/3_threatavoiding}
\input{Chapter/4_kfour}
\input{Chapter/5_concluding}

\section*{Declaration of AI Usage}
 During the preparation of this work the authors used ChatGPT 5.5 to assist with the proofs of \cref{thm: budget K_4,thm: kfour lower bound} about the budget $K_4$-Game.
 After using this tool, the authors reviewed and edited the content as needed and take full responsibility for the content of this paper.

\section*{Acknowledgements}
This research has been funded by the Vienna Science and Technology Fund (WWTF) [Grant ID: 10.47379/VRG23013].

This research has been funded by the Swedish Research Council project ExCLUS (2024-05603) and the Wallenberg AI, Autonomous Systems and Software Program (WASP) funded by the Knut and Alice Wallenberg Foundation.

Silas Rathke is funded by the Deutsche Forschungsgemeinschaft (DFG, German Research Foundation) under Germany's Excellence Strategy – The Berlin Mathematics
Research Center MATH+ (EXC-2046/1, EXC-2046/2, project ID: 390685689).

\bibliographystyle{plainnat}
\bibliography{bib}

\appendix
\input{Chapter/A_appendix}

\end{document}

%% file: preamble.tex
\usepackage[T1]{fontenc}
\usepackage{amssymb}
\usepackage{mathtools}
\allowdisplaybreaks
\usepackage{enumitem}
\usepackage{amsthm}
\usepackage{thm-restate}
\usepackage{xspace}
\usepackage{csquotes}
\usepackage{tikz}
\usepackage{verbatim}
\usepackage{subcaption}
\usepackage[capitalize]{cleveref}

\newtheorem{theorem}{Theorem}[section]
\newtheorem{lemma}[theorem]{Lemma}
\newtheorem{proposition}[theorem]{Proposition}
\newtheorem{claim}[theorem]{Claim}
\newtheorem{observation}[theorem]{Observation}
\newtheorem{question}[theorem]{Question}
\theoremstyle{definition}
\newtheorem{definition}[theorem]{Definition}
\newtheorem{problem}[theorem]{Problem}
\newtheorem{strategy}[theorem]{Strategy}

\newenvironment{claimproof}[1][Proof]{
  
  \begin{proof}[#1]
}{
  \end{proof}
}

\setlist[enumerate,1]{label={\textnormal{(\roman*)}}}

\usetikzlibrary{decorations.pathreplacing}

\newcommand{\floor}[1]{\left\lfloor#1\right\rfloor}
\newcommand{\ceil}[1]{\left\lceil#1\right\rceil}
\newcommand\eps{\varepsilon}
\newcommand{\coleq}{\coloneqq}
\newcommand\cF{\mathcal{F}}
\newcommand{\CH}{\mathcal{H}}
\newcommand{\HG}{H}
\newcommand{\subs}{\subseteq}
\newcommand{\abs}[1]{\left\lvert#1\right\rvert}
\newcommand{\erdos}{Erd\H{o}s\xspace}
\newcommand{\Nearly}{N^+}
\newcommand{\Ninvearly}{N^-}
\newcommand{\degM}{\deg_M}
\newcommand{\degML}{\deg^L_M}
\newcommand{\degB}{\deg_B}
\newcommand{\bG}{\mathbb G}
\newcommand{\norm}[1]{\left\lVert #1 \right\rVert}
\renewcommand{\O}{\mathcal O}
\DeclareMathOperator{\threats}{th}
\newcommand{\nH}{h}
\newcommand{\nT}{t}
\newcommand{\nD}{g}
\newcommand{\nC}{c}
\newcommand{\nS}{s}
\newcommand{\bud}{b}
\DeclareMathOperator{\budget}{bud}

\newcommand{\mbg}{$(1:q)$ Maker Breaker Game\xspace}
\newcommand{\tg}{$(1:q)$ Triangle Game\xspace}
\newcommand{\btg}{$(1:q)$ Budget Triangle Game\xspace}
\newcommand{\abtg}{$(1:q)$ $a$-Budget Triangle Game\xspace}
\newcommand{\btatg}{$(1:q)$ Budget Threat-Avoiding Triangle Game\xspace}
\newcommand{\bkfourg}{$(1:q)$ Budget~$K_4$-Game\xspace}
\newcommand{\kfourg}{$(1:q)$ $K_4$-Game\xspace}
\newcommand{\qkfour}{q_{K_4}}
\newcommand{\qTG}{q_\triangle}
\newcommand{\hgame}{$(1:q)$ $H$-Game\xspace}
\newcommand{\bhgame}{$(1:q)$ Budget $H$-Game\xspace}
\newcommand{\hgames}{$(1:q)$ $H$-Games\xspace}
\newcommand{\hypermakerbreaker}{Hypergraph Maker Breaker Game\xspace}
\newcommand{\hypermakerbreakers}{Hypergraph Maker Breaker Games\xspace}

%% file: Chapter/0_abstract.tex
\begin{abstract}
    The Maker Breaker Triangle Game involves two players, Maker and Breaker, who alternately claim 1 and $q$ edges of $K_n$, respectively. Maker's goal is to claim all three edges of any triangle, whereas Breaker's goal is to prevent this. The threshold bias, i.e. the minimum $q$ such that Breaker wins, is known to lie between $\big\lceil\sqrt{2n-2}-\frac{3}{2}\big\rceil$ and $\big(\sqrt{8/3}+o(1)\big)\sqrt{n}$. Determining its exact value is a longstanding open problem.
    
    In this paper, we introduce a novel version of the game in which Breaker may claim fewer than $q$ edges per round to build up a budget that he can spend in later rounds.
    With this additional power for Breaker, we determine the threshold bias to be precisely $\big\lceil\sqrt{2n-2}-\frac{3}{2}\big\rceil$ for all~$n$, matching the known lower bound. This is the first version of the Maker Breaker Triangle Game for which the exact threshold bias is known. Even if Breaker is not allowed to use the budget for threats, we prove that the threshold bias is still $\big(\sqrt{2}+o(1)\big)\sqrt{n}$.
    
    Furthermore, we study the budget version of other Maker Breaker Games. Specifically, for the $K_4$-Game, we prove the first explicit lower and upper bounds on the threshold bias in both the original and the budget version of the game.
\end{abstract}

%% file: Chapter/1_introduction.tex
\section{Introduction}\label{sec: introduction}
\cite{chvatal1978biased} introduced the \emph{biased \mbg} on the complete graph $K_n$ for a fixed positive integer $n$ and a family of \emph{winning sets} $\cF\subs 2^{E(K_n)}$. It is played by two players, Maker and Breaker, in the following way: in each round, Maker first claims one of the previously unclaimed edges of $K_n$, whereafter Breaker claims $q$ of the previously unclaimed edges of $K_n$ (unless there are fewer than $q$ unclaimed edges, in which case he claims all remaining edges). Maker wins the game if, at some point, she has claimed all elements of a winning set $F\in \mathcal F$. Breaker wins if he has claimed one element of each winning set. In each Maker Breaker game, either Maker or Breaker has a winning strategy. For background on Maker Breaker Games and other positional games, we refer to \cite{krivelevich2014positional}.
\cite{chvatal1978biased} asked for the \emph{threshold bias}, i.e., given $n$ and $\cF\subs 2^{E(K_n)}$, the smallest $q$ such that Breaker has a winning strategy. They gave upper and lower bounds for the threshold bias for several different games of this kind.

One of the most famous examples is the \emph{\tg} where the winning sets $\cF$ are all the triangles of $K_n$. Let $\qTG(n)$ be the smallest $q$ such that Breaker has a winning strategy in the \tg. Chvátal and \erdos{} showed\footnote{The lower bound proven by \cite{chvatal1978biased} is actually $\ceil{\sqrt{2n+2}-\frac{5}{2}}$ which comes from the fact that there, Breaker starts the game instead of Maker.}
\begin{equation}\label{eq: chvatal erdos triangle bounds}
    \ceil{\sqrt{2n-2}-\frac{3}{2}}\le \qTG(n)\le \ceil{2\sqrt n}.
\end{equation}

The upper bound was improved by \cite{triangleimprovement} to 
\[\qTG(n)\le(2-1/24)\sqrt n\] if $n$ is large enough. Finally, \cite{glazik2022New} improved the upper bound even further in 2022, showing that 
\[\qTG(n)\le \sqrt{\left(\frac{8}{3}-o(1)\right)n}\approx 1.633\sqrt n.\] 

In the last nearly 50 years, the lower bound has not been improved, which led to the belief that the lower bound is essentially sharp, i.e.\@ that $\qTG(n)=\big(\sqrt 2+o(1)\big)\sqrt n$ (e.g. \cite{triangleimprovement}).\medskip

Since this has proven to be hard to show, it is natural to look at closely related variants of the Triangle Game with the aim of proving sharp bounds \citep{randomboard,randomboard2,dean2016client,beck2002positional}. This provides insight into how to tackle the original \tg. In particular, we ask the following question:

\begin{question}
How much additional power do we need to give Breaker so that we can prove that the lower bound in \Cref{eq: chvatal erdos triangle bounds} is essentially optimal?
\end{question}

Any Breaker strategy that tightens the threshold bias towards the lower bound must, in particular, defeat the construction behind the lower bound. In the strategy of \cite{chvatal1978biased}, Maker chooses a vertex $r$ and repeatedly claims edges incident to it, building a star. For the stated range of $q$, when she can no longer extend the star, two of its leaves still form an unclaimed edge. She then claims that edge to win the game.

After Maker's first edge $\{u,v\}$, Breaker does not know which endpoint will become the centre. In this particular strategy, Maker's second edge reveals her choice. The uncertainty can, however, last longer: Maker may first claim several disjoint edges and then join vertices of equal Maker degree in different components, retaining two possible centres. Breaker can therefore face repeated decisions about where to claim his edges before Maker commits to one centre.

This motivates giving Breaker the power to \enquote{wait} with claiming his edges, i.e. to build a budget of moves, which he can use in later turns. This may help Breaker avoid committing to edges too early and instead allocate them according to how the game develops. We now formalise this budget variant of the game.

\begin{definition}
For a fixed positive integer $n$, a family $\mathcal F\subs 2^{E(K_n)}$ of \emph{winning sets} and a non-negative integer $q$, called the \emph{bias}, the \emph{$(1:q)$ Budget Maker Breaker Game} is the following game with two players, called Maker and Breaker. The players take alternate turns, with Maker beginning. In her turn, Maker claims one of the previously unclaimed edges of $K_n$. In his turn, Breaker can claim an arbitrary number of previously unclaimed edges of $K_n$ such that at the end of his turn, the number of edges claimed by Breaker is at most $q$ times the number of edges claimed by Maker. 

Maker wins if she manages to claim all elements of an $F\in\mathcal F$. Breaker wins if he manages to claim an element of each winning set. 

If the winning sets are all the triangles of $K_n$, we call it the \textit{\btg} and denote its threshold bias by $\qTG^b(n)$.
\end{definition}

In contrast to the original Maker Breaker Game, in the Budget Maker Breaker Game, Breaker can choose not to claim all $q$ edges immediately, but instead he can build up a budget of edges that he can use later. As this gives Breaker more options to play, we immediately get \[\qTG^b(n)\le \qTG(n).\]

As it turns out, having a budget is enough power for Breaker so that we can determine the threshold bias $\qTG^b(n)$ precisely and it is exactly the lower bound of \eqref{eq: chvatal erdos triangle bounds}.

\begin{restatable}{theorem}{thmbudgettriangle}\label{thm: budget triangle}
    For every integer $n\ge 3$, the threshold bias of the \btg is exactly $\qTG^b(n)=\ceil{\sqrt{2n-2}-\frac{3}{2}}$.
\end{restatable}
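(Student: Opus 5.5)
\textbf{Lower bound.} I would show that the Chvátal--\erdos{} star strategy still wins for Maker when Breaker has a budget, for every $q<\ceil{\sqrt{2n-2}-\frac32}$.

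Maker fixes a vertex $r$. She first checks whether two leaves of her star at $r$ span an unclaimed edge; if so, she claims it and wins. Otherwise she claims a new unclaimed edge $rv$, if one exists.

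Consider the moment after Maker's $k$-th star edge, followed by Breaker's reply. If Maker has not yet won, Breaker must by then own all $\binom k2$ edges among the leaves. Otherwise Maker completes a triangle on her next move. The budget rule gives a total Breaker edge count of at most $kq$.

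Maker stops only when $r$ has no unclaimed edges. At that point Breaker owns the $n-1-k$ remaining edges at $r$ as well. Hence Breaker survives only if
\[
n-1\le k(q+1)-\tbinom{k}{2}\quad\text{for some }k .
\]
The right-hand side is a quadratic in $k$ maximised at $k\approx q+\frac32$, with maximum at most $\frac12(q+\frac32)^2+\frac18$. A short integer computation should show this is $<n-1$ exactly when $q<\sqrt{2n-2}-\frac32$. The budget cannot help Breaker here, because every constraint above is a cumulative count checked after each Maker move.

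\textbf{Upper bound.} Set $q=\ceil{\sqrt{2n-2}-\frac32}$ and let $k\approx q+1$ be the optimiser from the computation above. Breaker follows two rules:

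\begin{enumerate}
\item \emph{Threat rule.} After each Maker edge $uv$, he immediately claims every unclaimed edge $uw$ with $w\in N_M(v)$ and every unclaimed edge $vw$ with $w\in N_M(u)$. This alone guarantees that Maker never closes a triangle.
\item \emph{Closing rule.} Whenever some vertex reaches Maker degree $k$, he claims all remaining unclaimed edges at that vertex, so that no vertex ever exceeds Maker degree $k$.
\end{enumerate}

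The whole difficulty is feasibility: at every time, the total number of edges Breaker has claimed must be at most $q$ times Maker's edge count. The budget is exactly what lets Breaker pay the large closing cost $n-1-k$ late, from savings accumulated earlier.

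To prove feasibility, I would use a charging argument. Orient each Maker edge towards the endpoint whose Maker degree is larger at the moment of insertion, and charge that edge's $q$ units to this head vertex. Each threat edge created by $uv$ is charged to a vertex of the resulting cherry. I would then aim to show the following per-vertex bound for a vertex of in-degree $j$:
\[
\text{(threats charged)}+\text{(closing cost, if closed)}\le qj .
\]
This is precisely the star inequality $\binom{k}{2}+(n-1-k)\le kq$ from the lower bound, which holds by the choice of $q$.

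\textbf{Main obstacle.} The crux is handling non-star Maker graphs, where many cherries are centred at vertices that do not receive much of the edge charge. In the naive accounting a vertex of degree $d$ only earns $qd/2$, since each edge is split between its two endpoints, and that is too little. If the orientation argument fails, I would try a potential function instead, equal to Breaker's remaining budget minus a sum over vertices of "future obligations" $f(\deg_M v)$. I would choose $f$ convex so that $f(k)$ matches the closing cost, and verify by case analysis on the degrees of $u$ and $v$ that each Maker move keeps the potential nonnegative. The case analysis showing that each single Maker move preserves the potential is the step I expect to be hardest.
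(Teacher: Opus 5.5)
Your lower bound is the Chv\'atal--Erd\H{o}s star argument, exactly as in the paper's appendix, and it is fine. In fact the real maximum of $k(q+\frac32)-\frac{k^2}{2}$ is $\frac12(q+\frac32)^2$, so $q<\sqrt{2n-2}-\frac32$ gives the contradiction without any integer computation. Your Breaker strategy is also precisely the paper's: claim all threats, and close a vertex at Maker degree $q+1$.

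The gap is the feasibility argument, which is the whole content of the upper bound. The orientation charge is only something you ``aim to show'', and the potential-function fallback is unspecified. Moreover, the orientation scheme fails as stated. Let $k=q+1$, take disjoint sets $X$ and $W=\{w_1,\dots,w_k\}$ with $|X|=k-1$, and a fresh vertex $v$. Maker first claims all edges between $W$ and $X$. This graph is bipartite, so Breaker only claims edges inside $W$ and inside $X$. He also claims the closing edges of each $x\in X$ once $x$ reaches degree $k$, among them $xv$. Then Maker claims $vw_1,\dots,vw_k$; none of these is ever a threat or a closing edge before she takes it. When $vw_i$ is inserted, $w_i$ has Maker degree $q$ and $v$ has $i-1$. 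So $v$ is the head of at most one edge and receives at most $q$ units. Yet when $v$ reaches degree $k$ it is incident to only $k-1$ Breaker edges. Its closing cost is therefore $n-2q-2=\Theta(n)$, which violates your per-vertex inequality.

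The missing idea is that the symmetric split you dismiss as ``too little'' is exactly what works. Each Maker edge gives $q/2$ to each endpoint. Each threat edge is paid $\frac12$ by each of its two endpoints, which are necessarily open. A closed vertex pays for all of its own closing edges. One maintains $\sum_v b(v)+e(G_B)\le q\,e(G_M)$ with $b\ge 0$. Here $b(v)=\frac q2\deg_M(v)-\frac12 t(v)$ for open $v$, with $t(v)$ the number of threat edges at $v$, and $b(v)=0$ for closed $v$.

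Your worry about cherries centred at poorly charged vertices disappears, because a threat edge is charged to its own endpoints, not to the centre. The centre only enters through its degree cap: every Maker neighbour of $v$ has at most $q$ other Maker neighbours, so $t(v)\le q\deg_M(v)$ and $b(v)\ge 0$. This is where the threshold $q+1$, rather than $q+2$, is used.

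When $v$ is closed it holds $\frac{q(q+1)}2-\frac12 t(v)$. Its closing cost $n-1-(q+1)-\deg_B(v)$ is at most $n-q-2-t(v)$, since each threat edge at $v$, though paid only half by $v$, lowers that cost by a full unit. So it suffices that $n-q-2\le\frac{q(q+1)}2$, i.e.\ $q^2+3q\ge 2n-4$. This is literally your star inequality $\binom k2+(n-1-k)\le kq$ at $k=q+1$. It holds for $q=\lceil\sqrt{2n-2}-\frac32\rceil$ because, by a parity argument, this equals $\lceil\sqrt{2n-\frac74}-\frac32\rceil$.
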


With a complete understanding of the \btg{}, a canonical next step is to take away some of the new power of Breaker and see if we can still prove the same threshold bias. One crucial restriction of Breaker in the original \tg is that he can claim at most $q$ threats. Here, a \textit{threat} is an edge such that, if Maker claimed it, she would win immediately. In other words, it is an unclaimed edge that, together with two edges claimed by Maker, forms a triangle. Therefore, Maker wins the original Triangle Game if she can create $q+1$ threats in a single turn. In the Budget Triangle Game, this is no longer true as Breaker might build up enough budget to deal with more than $q$ threats in a single turn.

Thus, a natural game \enquote{between} the original Triangle Game and the Budget Triangle Game is a game with a budget, but with the restriction that Breaker is only allowed to claim at most $q$ threats per turn:

\begin{definition}
    In the \emph{$(1:q)$ Budget Threat-Avoiding Triangle Game}, two players, Maker and Breaker, take alternate turns with Maker beginning. In her turn, Maker claims one of the previously unclaimed edges of $K_n$. In his turn, Breaker can claim at most $q$ threat edges and an arbitrary number of previously unclaimed edges of $K_n$ such that at the end of his turn, the number of edges claimed by Breaker is at most $q$ times the number of edges claimed by Maker.

    Maker wins if she manages to claim all edges of a triangle. Otherwise, Breaker wins. We denote its threshold bias by $\qTG^t(n)$.
\end{definition}
Note that we have \[\qTG^b(n)\le \qTG^t(n)\le \qTG(n).\]

For this game, we are also able to show that the lower bound of \Cref{eq: chvatal erdos triangle bounds} is essentially tight. 

\begin{restatable}{theorem}{threatavoiding}
\label{thm: threatavoiding}
    For every integer $n\ge 3$, the threshold bias of the \btatg fulfils $\qTG^t(n) \in \left[\left\lceil\sqrt{2n-2}-\frac{3}{2}\right\rceil,\left\lceil \sqrt{2n}\right\rceil+1\right]$. In particular, $\qTG^t(n) = \big(\sqrt{2}+o(1)\big)\sqrt{n}$.
\end{restatable}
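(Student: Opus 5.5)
The lower bound is immediate from \cref{thm: budget triangle}, since $\qTG^b(n)\le \qTG^t(n)$ and $\qTG^b(n)=\ceil{\sqrt{2n-2}-\frac32}$. The \enquote{in particular} statement then follows because both endpoints of the interval are $(\sqrt2+o(1))\sqrt n$. So the work is the upper bound: for $q\ge\ceil{\sqrt{2n}}+1$ we must give a Breaker strategy in the \btatg that never claims more than $q$ threats in one turn.

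The plan is to adapt the Chv\'atal--\erdos{} strategy, with the budget used to delay commitment. Breaker maintains the invariant that after each of his turns there are no unclaimed threats. Under this invariant, Maker's edge $\{u,v\}$ creates new threats only of the form $\{u,w\}$ with $w\in N_M(v)$, or $\{v,w\}$ with $w\in N_M(u)$. Breaker first closes all of these. He then \emph{pre-emptively} claims further unclaimed edges in the two stars of $\{u,v\}$: between $u$ and the vertices adjacent in Maker's graph to likely future neighbours of $v$, and symmetrically. The aim is to keep every vertex $x$ protected, meaning all edges from $x$ to the Maker-neighbourhoods of Maker-neighbours of $x$ are already Breaker's, and to do so with a total cost of at most $q$ times the number of Maker edges.

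The accounting would go by degree. When the $d$-th Maker edge at a vertex $v$ appears, at most $d-1$ threats arise at that endpoint. If Breaker protects each vertex as its Maker degree grows, the total number of threats claimed is $\sum_v \binom{\deg_M(v)}{2}$-like. It is charged against the $2\times$ (number of Maker edges) credits. The budget lets Breaker save $q$ per round early on and spend it when a vertex of high degree creates many threats at once. The threat-per-turn restriction forces each such spike to be at most $q$. That holds if Maker degrees stay below about $q$, which Breaker can enforce by claiming all edges at a vertex before its Maker degree reaches roughly $q$. The degree-sum $1+2+\dots+k\approx k^2/2$ against $n$ vertices gives the condition $q^2/2\gtrsim n$, i.e.\ $q\ge\sqrt{2n}$; the $+1$ and ceiling absorb rounding.

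The concrete strategy I would try works by isolating vertices. Whenever Maker's degree at some vertex $v$ reaches level $k$, Breaker spends budget to claim non-threat edges at $v$, so that the number of unclaimed edges at $v$ is at most about $q-k$ times something. Within $\approx q$ rounds of becoming active, $v$ is then fully surrounded and can produce no threats beyond the $\le q$ Breaker handles immediately.

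The main obstacle is the interaction: a single Maker edge $\{u,v\}$ can create threats at both endpoints simultaneously, about $\deg_M(u)+\deg_M(v)$ of them, which must be at most $q$. Breaker therefore has to keep the sum of Maker degrees at the two endpoints of any unclaimed edge below $q$, while the budget accounting still closes. I would try to maintain the invariant \enquote{every unclaimed edge $\{x,y\}$ satisfies $\deg_M(x)+\deg_M(y)\le q-1$}. Each Maker move raises two degrees by one; Breaker then claims all unclaimed edges violating the invariant. A potential function of the form $\sum_v f(\deg_M(v))$ would verify that the total number of these claims is at most $q$ per Maker edge when $q\ge\ceil{\sqrt{2n}}+1$. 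Checking this potential inequality is where I expect the real difficulty.
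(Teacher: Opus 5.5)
\textbf{Assessment: the proposal has a genuine gap.} Your lower bound and the \enquote{in particular} part are fine, exactly as in the paper: they follow from $\qTG^b(n)\le\qTG^t(n)$ and \Cref{lem: tg lower bound}. The gap is the upper bound, which is the whole content of the theorem.

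Your final rule is to claim all threats and every unclaimed $\{x,y\}$ with $\degM(x)+\degM(y)\ge q$. It does keep each Maker move below $q$ new threats, and it prevents triangles. But you never show that Breaker can afford it, i.e.\ that his edge count stays at most $q\cdot e(G_M)$. That is exactly the step you defer.

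The difficulty is real. Each edge class is affordable on its own by a per-vertex count:
\begin{itemize}
\item Degree-sum pairs: charge $\{x,y\}$ the amount $\degM(x)/q$ to $x$ and $\degM(y)/q$ to $y$. Then $x$ pays at most $\frac{n}{q}\degM(x)\le\frac q2\degM(x)$.
\item Threats: there are at most $\sum_w\binom{\degM(w)}{2}<\frac q2\sum_w\degM(w)$ of them.
\end{itemize}
Each estimate already uses the entire budget, so together they only give $2q\cdot e(G_M)$. For example, take a low-degree vertex $x$ whose Maker neighbours have degree close to $q$. Its half-share of incident threats already costs about $\frac q2\degM(x)$, its whole budget. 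On top of that, it can be charged up to the same amount again for degree-sum pairs.

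Recovering this factor of two requires exploiting how threats and forced pairs overlap. That depends on the structure of $G_M$ and on the joint degree distribution. A potential of the form $\sum_v f(\degM(v))$ cannot capture this. A single move raising $\degM(u)$ from $k$ to $k+1$ forces a new edge at $u$ for every vertex of degree exactly $q-k-1$. That number can be anywhere between $0$ and $\Theta(n)$, independently of $k$ and of the other endpoint's degree.

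Moreover, any correct accounting must be sharp at leading order. If Maker builds an (almost) $q/2$-regular triangle-free graph on all vertices, Breaker's edges and his budget are both $(\frac12+o(1))n^2$. So rough estimates cannot close the argument.

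The paper avoids this in two ways: it claims far fewer edges, and it charges more cleverly.
\begin{itemize}
\item \Cref{strat:TA} claims threats, all edges between two vertices of Maker degree $\ge q/2$, and closing edges at degree $q$. Beyond that, it claims, one at a time, only edges whose claiming by Maker would actually create $q+1$ threats.
\item Because high-degree edges are claimed first, only the low-degree neighbours of the low endpoint matter.
\item Because dangerous edges are claimed iteratively, the witness sets $E^M_x$ are disjoint. This gives at most $n/2$ dangerous edges at a low-degree vertex (\Cref{lemma:dangerous n/2}), instead of the up to $n$ degree-sum pairs your rule can force.
\item The remaining deficit is covered by letting each high-degree $u$ pass its surplus $\alpha_u=\frac{2\degM(u)}{q}-1$ to the dangerous edges at its early low-degree neighbours $\Nearly(u)$. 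This is controlled by the disjointness bound $\nD(w)+\nS(w)+\nC(w)\le n$ of \Cref{lem:fiveprops}.
\end{itemize}

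To complete your route, you would need a comparably global and sharp argument. It would have to show that your more aggressive rule never claims more than $q\cdot e(G_M)$ edges. As it stands, the proposal does not contain one.
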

Note that \Cref{thm: threatavoiding} determines the threshold bias up to an additive error of 3 for all $n \ge 3$.

We discuss other ways to restrict Breaker's power in \Cref{sec: Concluding}, along with future research questions.\medskip 

Next, we turn our attention to more general settings, in which Maker's winning sets do not derive from triangles but from a fixed graph~$H$. More formally, the \textit{\hgame} is the \mbg in which the winning sets are the edge sets of all subgraphs of $K_n$ isomorphic to $H$. Again, the threshold bias $q_H(n)$ is defined as the smallest $q$ so that Breaker has a winning strategy. \cite{bednarska2000biased} show that if $H$ has at least two edges, the threshold bias is $q_H = \Theta\big(n^{1/m(H)}\big)$ where \[m(H)\coleq\max_{\substack{H'\subs H\\ v(H')\ge 3}}\frac{e(H')-1}{v(H')-2}.\] 

Bounds on the constant factor of $q_H(n)$ are only known for specific graphs~$H$ besides the triangle. \cite{MinorGame} determines\footnote{His result is phrased for the game where Maker only needs to build a graph that has $H$ as a minor, but if $H$ is one of the graphs stated above, then having $H$ as a minor is equivalent to having $H$ as a subgraph.} the leading constant if $H$ is a matching, a forest whose components have at most two edges, a path, or a three-legged spider. Furthermore, he shows \[\big(1-o(1)\big)\frac{n}{r-1}\le q_{K_{1,r}}(n)\le \big(2+o(1)\big)\frac{n}{r-1}.\]
Bounds by \cite{sowa2025constructive, sowa2026constructive} show that 
\[\big(0.16-o(1)\big)n^{2/3}\le q_{C_4}(n)\le \big((27/4)^{1/3}+o(1)\big)n^{2/3},\]
and, more generally, for a fixed integer $k\ge 3$ and sufficiently large $n$,
\[q_{C_k}(n)\le \floor{\sqrt[k-1]{(k-1)\left(\frac{2(k-1)}{k}\right)^{k-2}n^{k-2}}}+1.\]
Finally, \cite{C6Paper} showed that for sufficiently large $n$,
\[0.007n^{4/5}\le q_{C_6}(n).\]
To our knowledge, these are all graphs where bounds on the constant factor are known. Motivated by the results for the \tg, one can ask the following question:

\begin{question}
Can we obtain tighter bounds on the threshold bias in the \hgame when allowing Breaker to use a budget?
\end{question}
To this end, we define the \textit{\bhgame} as the version of the \hgame in which Breaker is allowed to use a budget and $q_H^b(n)$ as the corresponding threshold bias. Proving bounds on $q_H^b(n)$ likely provides insight on how to determine $q_H(n)$. For this, we should first make sure that $q_H^b(n)$ and $q_H(n)$ have the same order of magnitude. And indeed, we can show that this is the case:
\begin{restatable}{proposition}{Hresult}\label{prop: H-game thresholds}
    For every fixed graph $H$ with at least two edges, the threshold bias of the \bhgame fulfils $q_{H}^b(n)=\Theta\big(n^{1/m(H)}\big)$.
\end{restatable}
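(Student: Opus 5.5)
The upper bound comes for free: a Breaker strategy for the ordinary \hgame is also a legal strategy in the budget version, since Breaker may simply spend his full allowance every turn. Hence $q_H^b(n)\le q_H(n)=O\big(n^{1/m(H)}\big)$ by the result of \cite{bednarska2000biased}. The real work is the lower bound $q_H^b(n)=\Omega\big(n^{1/m(H)}\big)$. Here Breaker's extra flexibility must be neutralised, because he may now hold back edges and spend many at once.

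For the lower bound I would adapt the random Maker strategy of Bednarska and Łuczak to the budget setting. Choose $q=c\,n^{1/m(H)}$ with a small constant $c>0$, and let Maker play randomly: each turn she picks a uniformly random edge of $K_n$. If it is still unclaimed, she claims it. If it is already claimed (by either player), the turn counts as a failure and she claims an arbitrary free edge, which is then disregarded.

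The key observation is that a failure event depends only on Breaker's total number of claimed edges at that moment. At Maker's $i$-th turn, Breaker owns at most $q(i-1)$ edges regardless of how he distributed them over time. So the probability that Maker's $i$-th random choice hits a Breaker edge is at most $q(i-1)/\binom n2$. Budgeting changes the timing of Breaker's moves but not this cumulative bound, which is exactly what the original analysis uses. Over $T=\varepsilon n^{2-1/m(H)}$ rounds with $\varepsilon$ small relative to $c$, the expected number of failures is at most $qT^2/n^2=c\varepsilon^2 n^{2-1/m(H)}$, a small fraction of $T$.

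The successful random choices therefore contain a uniform random graph $G(n,M)$ with $M\ge T/2$, minus the edges hit by Breaker. The next step is to invoke the Bednarska–Łuczak lemma (a Janson-type statement): for $M=\varepsilon n^{2-1/m(H)}$, with high probability the random graph $G(n,M)$ has the property that deleting any set of edges of size $o$ of the number of copies of $H$ does not destroy all copies. More precisely, every subgraph containing many edges still contains a copy of $H$ once we remove only the bounded damage Breaker can do. Breaker's influence is confined to the edges he claimed before Maker's random choice of them, and the count above bounds this.

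The main obstacle is checking that the ``adaptive adversary'' part of their argument survives budgeting. In their proof, Breaker's destroyed copies are charged via the failed picks. I expect the per-turn bound of $q$ is used only through the cumulative count $q(i-1)$, but I would verify that no step relies on Breaker's edges arriving evenly. If some step did, I would fall back on a coupling: Breaker's budgeted strategy is dominated by a non-budget Breaker with bias $2q$ who claims in advance whatever the budget player will eventually have claimed by that time. This is not quite possible online, so the cumulative-count argument is the route I would push.
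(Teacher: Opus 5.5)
Your overall route is the paper's. The upper bound comes from $q_H^b(n)\le q_H(n)$. The lower bound comes from the random Maker strategy of Bednarska and \L{}uczak, with exactly the right observation: a budget cannot increase the failure probability, since before Maker's $i$-th pick Breaker owns at most $q(i-1)$ edges.

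The ``adaptive adversary'' issue you leave open disappears once the random-graph input is used in its actual form (\Cref{lem: Random graph fact}): with probability at least $2/3$, \emph{every} subgraph of $\bG(n,m)$ with $\floor{(1-\delta)m}$ edges contains a copy of $H$. Because this is uniform over all subgraphs, it does not matter which sampled edges Breaker hits or when he claimed them. Only the number of failures matters, and the per-step conditional bound plus Markov's inequality controls it. Let Maker sample without replacement among not-yet-sampled edges; then the $m$ sampled edges form exactly $\bG(n,m)$, and her graph is $\bG(n,m)$ minus the failures. No coupling is needed.

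As written, however, two steps fail.

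\textbf{The constants are chosen in the wrong order.} The robustness you need is against deleting a constant fraction $\delta$ of the edges, since your failures are a constant fraction of the rounds. Robustness against deleting ``$o$ of the number of copies'' is not the right statement. Moreover, the admissible $\delta$ is not uniform in the constant $\varepsilon$ in $M=\varepsilon n^{2-1/m(H)}$. Let $H'\subseteq H$ attain $m(H)$, so that $1/m(H)=(v(H')-2)/(e(H')-1)$. Then the expected number of copies of $H'$ in $\bG(n,M)$ is $\Theta(\varepsilon^{e(H')-1}M)$, and deleting one edge from each copy leaves an $H$-free graph. So any valid $\delta$ is $O(\varepsilon^{e(H')-1})=O(\varepsilon^2)$, because $m(H)>1$ forces $e(H')\ge 3$ when $H$ has a cycle. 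Your failure fraction is about $c\varepsilon$, which is below this only if $c\ll\varepsilon$. That is the opposite of taking $\varepsilon$ small relative to $c$.

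The paper orders the choices correctly. It fixes $m=2\ceil{n^{2-1/m(H)}}$, then takes $\delta$ from the lemma, and only then sets $q=0.1\,\delta n^{1/m(H)}$. With this choice Breaker owns at most a $\delta/2$ fraction of all edges during these $m$ rounds. Hence the expected number of failures is at most $\frac{\delta}{2}m$, and with probability at least $1/2$ there are at most $\delta m$ of them.

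\textbf{Acyclic $H$ is not covered.} The lemma requires $H$ to contain a cycle. For a matching, $m(H)=1/2$ makes $n^{2-1/m(H)}=1$, so your argument says nothing about forests. The paper handles these separately with greedy Maker strategies: bias of order $n^2$ for matchings, and order $n$ for forests containing a path on three vertices.
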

Hence, studying the \bhgame might help in solving the \hgame. In the case of $H = K_4$, we are able to give an explicit upper bound for the budget version.

\begin{restatable}{theorem} {thmbudgetkfour}\label{thm: budget K_4}
    The threshold bias of the \bkfourg fulfils the upper bound
    \[q_{K_4}^b(n) \leq \left(5\cdot 2^{-9/5}e^{2/5} + o(1) \right)n^{2/5}\approx 2.14\,n^{2/5}.\] 
\end{restatable}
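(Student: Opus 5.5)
We construct an explicit Breaker strategy whose total number of claimed edges after every Maker move $t$ is at most $q t$, and which keeps Maker's graph $M$ free of $K_4$. The bound has order $n^{2/5}$ because $m(K_4)=5/2$, and the constant $5\cdot 2^{-9/5}e^{2/5}$ should come out of optimising a few thresholds in the strategy. The basic observation is that a $K_4$ in Maker's graph is a Maker edge $uv$ together with a Maker edge inside the common neighbourhood $C(u,v)=N_M(u)\cap N_M(v)$. Breaker therefore maintains the invariant
\[
\text{(I)}\quad \text{for every Maker edge } uv, \text{ every unclaimed edge inside } C(u,v) \text{ is claimed by Breaker.}
\]
The budget is what makes this affordable. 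Breaker does nothing while (I) is not threatened, and spends the saved edges when a Maker move forces repairs. The accounting then only needs to hold on average, which is exactly what the budget rule checks.

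The strategy has three layers, each charged to Maker's moves. (a) \emph{Degree cap.} Whenever a vertex reaches Maker degree $D$, Breaker claims all remaining edges at it. This costs at most $n$ per capped vertex. At most $2t/D$ vertices are capped after $t$ Maker moves, so the amortised cost is at most $2n/D$ per move. (b) \emph{Codegree cap.} Whenever a pair $\{u,v\}$ reaches $|C(u,v)|=s$, Breaker freezes the pair by claiming every edge from $v$ to $N_M(u)$ and from $u$ to $N_M(v)$. This costs at most $2D$ per frozen pair. Counting cherries, the number of pairs reaching codegree $s$ is at most $\sum_v \binom{\deg_M v}{2}/s\le tD/s$, so the amortised cost is about $2D^2/s$ per move. (c) \emph{Repair of (I).} When Maker claims $xy$, the invariant can fail in two ways: inside $C(x,y)$ itself, at cost at most $\binom{s}{2}$, and because $y$ joins $C(x,w)$ for $w\in N_M(x)\cap N_M(y)$ (symmetrically for $x$), at cost at most $s$ per such $w$. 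Together this is at most about $s^2/2+2s\cdot\min(D,s)$ per move. Summing, Breaker needs $q \gtrsim 2n/D + 2D^2/s + s^2/2+\dots$. Balancing with $D\sim n^{3/5}$ and $s\sim n^{1/5}$ gives $q=\Theta(n^{2/5})$.

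The crude bounds above will not give the claimed constant, and sharpening them is the main obstacle. I expect to replace the hard caps by a smoother amortisation. The repair cost in (c) is not $s^2$ on every move; a codegree set grows one vertex at a time, so the total cost of filling $C(u,v)$ telescopes to $\binom{s}{2}$ once per pair rather than once per move. Similarly, the degree-cap charge should be paid in instalments as degrees grow, not in a single lump. I would first set up a potential function $\Phi=\sum_v f(\deg_M v)+\sum_{uv}g(|C(u,v)|)$ with $f,g$ chosen so that each Maker move raises $\Phi$ by at most $q$ while Breaker's actual spending is bounded by $\Phi$. Choosing $f$ and $g$ optimally should lead to an expression of the form $\frac{a n}{D}+\frac{b D^{k}}{\cdots}$ whose minimisation over $D$ produces the factors $5$ (from balancing five powers) and $2^{-9/5}$. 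I would guess the factor $e^{2/5}$ comes from an exponential potential or from a bound like $\binom{D}{k}\le (eD/k)^k$ in the cherry count.

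Finally, I would check that Breaker never claims a threat edge he cannot afford, that the budget constraint holds at every turn (not just asymptotically), and that the $o(1)$ terms absorb the rounding in $D$ and $s$. Given (I), Maker never completes a $K_4$, since its last edge would lie inside some $C(u,v)$ with $uv$ a Maker edge.
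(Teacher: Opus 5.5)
Your reformulation is sound. Invariant (I) is exactly ``Breaker claims every threat edge'', and your repair count in (c) matches what one gets once every Maker edge lies in at most $s$ Maker triangles: $\frac12 s^2+2s^2=\frac52 s^2$ per move, which is the paper's non-budget count. Your telescoping remark corresponds to the paper's amortised bound.

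\textbf{The gap: how codegrees are kept small.} Your deterministic caps are far too expensive.
\begin{itemize}
\item With your own estimates, the degree cap costs $2n/D$ and freezing costs $2D^2/s$ per move. At $D\sim n^{3/5}$ and $s\sim n^{1/5}$ the second term is $2D^2/s\sim 2n$, not $n^{2/5}$, so the stated balancing is arithmetically wrong.
\item Balancing $n/D$, $D^2/s$ and $s^2$ correctly gives $s=q^{1/2}$, $D=n/q$ and $n^2q^{-5/2}=q$, i.e.\ only $q=\Theta(n^{4/7})$.
\item Freezing is not a one-time payment. After $\{u,v\}$ is frozen, Maker can claim $uw$ for a fresh $w$ and then $vw$. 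So Breaker must keep answering every new Maker edge at $u$, for each of up to $D^2/s$ frozen pairs containing $u$, and your accounting omits this.
\item The final paragraph (a potential $\Phi$ with unspecified $f,g$, and guesses about where $5$, $2^{-9/5}$ and $e^{2/5}$ come from) gives no mechanism.
\end{itemize}
As written, the proposal does not reach order $n^{2/5}$, let alone the constant.

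\textbf{The missing idea.} Control triangles on Maker edges with a potential (Erd\H{o}s--Selfridge type) strategy instead of hard caps.
\begin{itemize}
\item A $t$-book is an edge plus $t$ wedges over it: it has $2t+1$ edges, and $K_n$ contains $\binom n2\binom{n-2}{t}$ of them. By Beck's criterion, Breaker can stop Maker from ever claiming a $t$-book with $q_1$ edges per round as long as $\bigl(\binom n2\binom{n-2}{t}\bigr)^{1/t}<(q_1+1)^2$. For $t=\omega(\log n)$ the left side is at most $(1+o(1))\,ne/t$, so $q_1=(1+o(1))\sqrt{ne/t}$ suffices. This is where the factor $e$ comes from.
\item Without a $t$-book, each Maker edge lies in fewer than $t$ Maker triangles. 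For each vertex $v$, count triples $(x,y,w)$ with $x\in N_M(v)$, $vxy$ a Maker triangle and $xyw$ a Maker triangle. This shows that at most $e(G_M)\,t^2/2$ edges ever become threats.
\item The budget therefore lets Breaker close all threats with $q_2=\lceil t^2/2\rceil$ edges per round on average.
\end{itemize}
Minimising $\sqrt{ne/t}+t^2/2$ gives $t=2^{-2/5}e^{1/5}n^{1/5}$ and total bias $e^{2/5}\bigl(2^{1/5}+2^{-9/5}\bigr)n^{2/5}=5\cdot 2^{-9/5}e^{2/5}n^{2/5}$. No degree cap is needed at all.
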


Using the same techniques, we can prove the first explicit upper bound in the original version. 

\begin{restatable}{theorem}{thmnobudgetkfour}\label{thm: non-budget K_4}
    The threshold bias of the (original) \kfourg fulfils the upper bound
    \[q_{K_4}(n) \leq \left(5^{6/5}\;2^{-9/5}e^{2/5} + o(1) \right)n^{2/5}\approx 2.96\,n^{2/5}.\] 
\end{restatable}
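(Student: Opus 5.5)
The plan is to reuse, essentially verbatim, the Breaker strategy and the structural lemmas developed for \cref{thm: budget K_4}, and to change only the accounting. In the budget game it suffices that Breaker's \emph{cumulative} demand after $t$ Maker moves is at most $qt$. In the original game he must meet the demand \emph{turn by turn}. Whatever deficit arises from this stricter requirement has to be absorbed by a larger bias.

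\textbf{Step 1: recall the budget strategy.} Breaker's strategy in \cref{thm: budget K_4} keeps Maker's graph free of the configurations that lead to a $K_4$. Concretely, he claims every threat, i.e.\ every unclaimed edge that completes a Maker $K_4$ minus an edge. He also pre-emptively claims edges around vertices and triangles whose Maker degree crosses a threshold $d \approx \lambda n^{1/5}$ (or similar); these are the parameters optimised to produce the constant $5\cdot 2^{-9/5}e^{2/5}$. I would isolate from that proof the two kinds of charges.

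\begin{enumerate}
\item \emph{Instantaneous charges}: threats created by Maker's last edge. These must be answered immediately in either game.
\item \emph{Deferrable charges}: edges that are claimed to prevent future threats, and whose total is controlled by the $e^{2/5}$-type integral/sum estimate.
\end{enumerate}

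\textbf{Step 2: bound each kind per turn.} Following Chvátal--Erdős-style arguments, I would show the following.
\begin{enumerate}
\item Any single Maker edge $uv$ creates only $O\big(\deg_M(u)\deg_M(v)\big)$-type new threats. Under the degree thresholds maintained by the strategy, this is at most a constant times $n^{2/5}$.
\item The deferrable charges can be scheduled so that at most a constant fraction of them is due in any one turn. This requires Breaker to claim them greedily in advance, which is possible since they depend only on Maker degrees that grow by at most one per turn.
\end{enumerate}

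\textbf{Step 3: reoptimise.} Requiring that the sum of the worst-case per-turn charges is at most $q$ yields an inequality of the form $q^5 \ge C\,n^2$, with the same parameter family as in the budget proof but with an extra multiplicative factor. Optimising over the threshold parameter then gives a constant that is the budget constant times $5^{1/5}$. Indeed, the worst single turn can be charged roughly five times the average rate that was smoothed out by the budget. This produces the claimed $5^{6/5}2^{-9/5}e^{2/5}$.

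\textbf{Main obstacle.} The hard step is the second bullet of Step~2: showing that the deferrable charges can be spread evenly enough. In the budget proof they may cluster, for instance when many vertices hit the degree threshold simultaneously. I would first try a potential-function argument: Breaker maintains the invariant that every vertex of Maker degree $k$ already has its required edges for degree $k+1$ claimed. Then I would verify that maintaining this invariant costs at most about $(5-1)$ times the average rate per turn, which is where the factor $5$ should enter.
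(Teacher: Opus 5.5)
\textbf{There is a genuine gap.} The strategy you set out to reuse is not the one behind the budget bound, and the per-turn estimate in your Step~2 cannot come from Maker degrees.

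\emph{The budget proof has no degree thresholds.} Breaker does two things:
\begin{enumerate}
\item He uses Beck's Erd\H{o}s--Selfridge-type criterion to stop Maker from ever claiming a $t$-book, with $t=\Theta(n^{1/5})$. This costs at most $(1+o(1))\sqrt{ne/t}$ edges in every single turn, already without a budget. The $e$ in the constant comes from $\binom{n}{t}^{1/t}\le ne/t$, not from an integral estimate.
\item He claims every threat.
\end{enumerate}
The budget enters only in the second part. Every Maker edge lies in at most $t$ Maker triangles, so at most $e(G_M)\,t^2/2$ threats are ever created, i.e.\ $t^2/2$ per Maker edge on average. Your split is therefore backwards: the book-prevention part transfers verbatim to the original game, and it is the threat part that was amortised.

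\emph{Your degree-based bound fails.} The bound of $O(\deg_M(u)\deg_M(v))=O(n^{2/5})$ threats per Maker edge cannot be rescued by any enforceable degree cap. Suppose Maker just builds a star at a vertex $v$. To keep $\deg_M(v)\le d$, Breaker must claim the remaining $n-1-d$ edges at $v$ within $d$ rounds, so $qd\ge n-1-d$ and $d=\Omega(n/q)=\Omega(n^{3/5})$. Then $\deg_M(u)\deg_M(v)$ can be of order $n^{6/5}\gg q$. If your ``threshold'' was meant to cap the number of Maker triangles through an edge, that is the right quantity. But you would then need a mechanism enforcing it at cost $O(n^{2/5})$ per turn and a threat count in terms of it, and neither is in the proposal.

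\emph{The missing idea is a per-turn threat count.} Suppose Maker claims $\{u,v\}$ and $G_M$ has no $t$-book. A new threat is one of two kinds:
\begin{enumerate}
\item The edge $\{w,x\}$ opposite to $\{u,v\}$, with $uvw$ and $uvx$ Maker triangles. There are at most $t^2/2$ choices.
\item An edge incident to an endpoint, say $\{v,x\}$, with $uvw$ and $uwx$ Maker triangles. There are at most $t$ choices for $w$, then $t$ for $x$, and a factor $2$ for the endpoint, so at most $2t^2$.
\end{enumerate}
Hence at most $\frac{5}{2}t^2$ threats arise per turn, five times the amortised rate. Minimising $(1+o(1))\sqrt{ne/t}+\frac{5}{2}t^2$ over $t$ gives $t\approx 10^{-2/5}(en)^{1/5}$ and the constant $5^{6/5}2^{-9/5}e^{2/5}$.

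Your heuristic factor $5$ is numerically right, but it comes from this combinatorial count, not from rescheduling deferrable charges. The potential-function argument you flag as the main obstacle is neither needed nor carried out. The constant in Step~3 is read off from the target rather than derived.
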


We complement this with the first explicit lower bound on the threshold bias of the \kfourg. 

\begin{restatable}{theorem}{kfourlowerbound}\label{thm: kfour lower bound}
    For $n$ large enough, the threshold bias of the \kfourg and the \bkfourg fulfils the lower bound $\qkfour(n) \geq 6\cdot 10^{-6}n^{2/5}$.
\end{restatable}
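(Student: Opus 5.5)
Since allowing a budget only helps Breaker, we have $q_{K_4}^b(n)\le q_{K_4}(n)$. It therefore suffices to give Maker a winning strategy in the \bkfourg for $q=\delta n^{2/5}$ with $\delta=6\cdot 10^{-6}$. The plan is to adapt the random-strategy argument of \cite{bednarska2000biased}, keeping track of constants. Its key feature is that it only uses the total number of edges Breaker has claimed up to each time. That total is still at most $q$ times the number of Maker's edges in the budget game, so the budget gives Breaker no advantage in the analysis.

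\textbf{Maker's strategy.} Fix $N=\binom n2$ and $p=\varepsilon/q$ for a small constant $\varepsilon$, and let Maker play $t=pN$ rounds. In round $i$ she picks an edge $e_i$ uniformly at random among all edges of $K_n$ she does not yet own. If $e_i$ is free she claims it. Otherwise the round \emph{fails}, and she claims an arbitrary free edge, which we ignore in the analysis.

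\textbf{Step 1: failed rounds are few.} Before round $i$ Breaker owns at most $qi$ edges, whatever his budget history. Hence round $i$ fails with probability at most $qi/(N-i)$, and the expected number of failed rounds is at most about $q t^2/(2N)=\tfrac{\varepsilon}{2}pN$.

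\textbf{Step 2: many $K_4$'s appear among the random edges.} The set $R=\{e_1,\dots,e_t\}$ is distributed essentially as $G(n,t)$. By Janson's inequality or a second-moment computation, $R$ contains about $\binom n4 p^6$ copies of $K_4$ with high probability. This count has order $n^4p^6=\varepsilon^6 n^{8/5}/\delta^6$, which is large.

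\textbf{Step 3: the main obstacle.} A copy of $K_4$ in $R$ is lost only if one of its six edges is a failed round. The difficulty is that Breaker adapts: he may grab edges lying in many \emph{future} $K_4$'s of $R$, and these events are correlated with his strategy. I would handle this by a worst-case bound that holds simultaneously for every edge $e$ of $K_n$. Let $X_e$ be the number of $K_4$'s in $R\cup\{e\}$ containing $e$. Then $\mathbb E X_e\approx n^2p^5/2$, which is of order $\varepsilon^5/\delta^5\cdot n^{0}\cdot n^{2-2}$, in fact $\Theta(n^{0})\cdot (n^2p^5)$ with $n^2p^5 = \varepsilon^5/\delta^5$. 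Since this is a large constant rather than polylogarithmic, I would choose parameters so that $n^2p^5$ grows: take $p=\varepsilon/q$ with $\varepsilon$ fixed and check that $n^2p^5 = (\varepsilon/\delta)^5$. If this is too small for concentration, I would instead use a Kim--Vu polynomial or deletion-type upper-tail bound of the form $X_e\le C n^2p^5+\mathrm{polylog}(n)$ for all $e$ w.h.p., plus a union bound over the $N$ edges.

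\textbf{Step 4: conclusion.} With this uniform bound, the number of destroyed copies is at most (number of failed rounds)$\cdot\max_e X_e$. By Step 1 this is about
\[
\tfrac{\varepsilon}{2}pN\cdot C n^2p^5 \;\approx\; C\varepsilon\, n^4p^6/4 \quad(\text{up to polylog terms}).
\]
This is smaller than $\binom n4 p^6$ once $\varepsilon$ is small compared to $1/C$, so some $K_4$ survives entirely in Maker's graph. Because Breaker's adaptivity is covered by the worst-case bound over all $e$, the argument works against any Breaker strategy. A positive probability of winning then gives Maker a deterministic winning strategy, since the game is finite with perfect information.

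The main obstacle is Step 3: obtaining the upper tail for $X_e$ with explicit constants, and then optimising $\varepsilon$, $\delta$ and the hidden constants so that they combine to $6\cdot10^{-6}$. I expect the small value of $\delta$ to be forced precisely by the need for $n^2p^5$ to be large enough for concentration.
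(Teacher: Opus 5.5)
Your reduction to the budget game is correct, since $q_{K_4}^b(n)\le q_{K_4}(n)$. Steps 1--2 are also fine once Maker samples among not-yet-sampled edges, so that $R$ is exactly a uniform $t$-set. The argument breaks at Step 3, and no choice of constants repairs it.

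With $p=\varepsilon/q$ and $q=\delta n^{2/5}$ you have $\mathbb{E}X_e\approx\frac12 n^2p^5=\frac12(\varepsilon/\delta)^5=:\lambda$, a constant independent of $n$. Shrinking $\delta$ makes it a larger constant, but it never grows with $n$, whereas a union bound over $\binom n2$ edges needs tail probabilities $o(n^{-2})$. In fact the uniform bound is false. For $e=\{u,v\}$, $X_e$ is exactly the number of edges of $R$ inside the common $R$-neighbourhood of $u$ and $v$, which is approximately Poisson with mean $\lambda$. So $\Pr[X_e\ge k]$ is bounded away from $0$ for each fixed $k$, and w.h.p.\ $\max_e X_e$ is unbounded (it grows roughly like $\log n/\log\log n$). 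A bound of the Kim--Vu form $Cn^2p^5+\mathrm{polylog}(n)$ is therefore the best available, and it is dominated by its polylog term.

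Step 4's bound on destroyed copies then becomes about $\frac{\varepsilon}{4}pn^2\cdot\frac{\log n}{\log\log n}$. The number of copies in $R$ is only about $\binom n4p^6\approx\frac{\lambda}{12}pn^2$, so the comparison fails for large $n$ for every fixed $\varepsilon$. Making a worst-case bound work forces $n^2p^5$ to grow like $\log n$, which costs a polylogarithmic factor in $q$. As written, the proposal therefore gives at best $q_{K_4}^b(n)=\Omega\big(n^{2/5}/\mathrm{polylog}(n)\big)$.

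The missing idea is to truncate instead of taking a maximum. This is what lies behind the robustness statement \Cref{lem: Random graph fact}, and it is the right use of your \enquote{deletion-type} remark. Call $e\in R$ heavy if $X_e>K$ for a large constant $K$. The copies of $K_4$ in $R$ through heavy edges number at most $\sum_{e:\,X_e>K}X_e\le K^{-1}\sum_{e\in R}X_e^2$. The expectation $\mathbb{E}\sum_{e\in R}X_e^2$ counts ordered pairs of copies sharing an edge, which at this density is $O\big((1+\lambda)\binom n4p^6\big)$. By Markov, these copies are a small fraction of all copies with good probability, while each failed round on a light edge destroys at most $K$ of the remaining ones.

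This bound is deterministic given $R$ and the number of failures, so Breaker's adaptivity is harmless. It also only uses that Breaker owns at most $q(i-1)$ edges before round $i$, so it would cover the budget game as well. Even then, the theorem is an explicit-constant statement. You would still have to carry $\varepsilon$, $\delta$, $K$ and the Markov losses through to reach $6\cdot10^{-6}$, and the proposal leaves this open.

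The paper avoids a random-graph analysis altogether. It applies the hypergraph Maker--Breaker theorem of \cite{hgameforhypergraphs} as a black box, whose constant can be taken as $x/4$ for any $x\in(0,\frac12)$ with $x(1-\ln x)<(c_1 2^{k+2})^{-1}$. For $K_4$ it computes $\Delta_1(K_4,n)=6\,d(K_4,n)$ (so $c_1=6$, $k=6$) and $f(K_4,n)=d(K_4,n)^{1/5}\ge\frac12n^{2/5}$, and takes $x=5\cdot10^{-5}$. Maker then wins whenever $q\le\frac x4f(K_4,n)-1$, which covers $q\le 6\cdot10^{-6}n^{2/5}$.
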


\paragraph{Related work.}
Several variants of Maker Breaker Games modify Breaker's power while keeping the general setup of the game. 
\cite{pegden2025structure} study \emph{structure-biased} games, in which Breaker's edges in each turn are required to form a prescribed structure, such as a matching, a clique, or a star. 
In particular, they determine the order of magnitude of the threshold bias for several structure-biased versions of the Triangle Game.

\cite{krivelevich2015random} consider, among other variants, a random Breaker who claims his $q$ edges uniformly at random in every turn, while Maker still plays strategically. 
Conversely, \cite{clemens2025maker} study a variant where Breaker is given more power through an informational advantage. They consider games with a \emph{phantom} breaker: Breaker has complete information, whereas his claimed edges are hidden from Maker until she attempts to claim one of them.

Another related modification is the Client Waiter Game, introduced by \cite{beck2002positional} to better understand Maker Breaker Games. 
Here, the Breaker-like player Waiter offers a collection of unclaimed edges, Client chooses one of them, and Waiter receives the remaining edges.
Biased and random-graph versions of Client Waiter Games were studied by \cite{dean2016client}.

Instead of modifying the players' moves, another line of work changes the underlying board.
\cite{randomboard} and \cite{randomboard2} study clique games on the random graph $G_{n,p}$.
More recently, \cite{clemens} initiated the study of minimal boards on which Maker has a winning strategy in the $H$-Game, and \cite{brinkmann2026faster} presented algorithms for deciding the unbiased ($q=1$) Triangle Game on general graphs.
\cite{hgameforhypergraphs} generalise the threshold-bias results of \cite{bednarska2000biased} from graph $H$-Games to a broad class of hypergraph Maker Breaker games.

Regarding the $K_\ell$-Game, \cite{beckbook,1to1clique} determines asymptotically the largest clique Maker can build in the unbiased game, while \cite{gebauer} proves a lower bound in the biased variant.

\begin{paragraph}{Notation.}
    Throughout the paper, we denote by $G_M$ resp.\@ $G_B$ the graph on all $n$ vertices containing exactly the edges claimed by Maker resp.\@ Breaker. For a vertex $v$, $\degM(v)$ resp.\@ $\degB(v)$ denote the degree of $v$ in $G_M$ resp.\@ $G_B$. Denote by $e(G_M)$ and $e(G_B)$ the number of edges in $G_M$ and $G_B$, respectively. 
\end{paragraph}

\begin{paragraph}{Outline of the paper.}
    In \Cref{sec: budget triangle game}, we prove \Cref{thm: budget triangle}, which determines the exact threshold bias for the \btg.
    In \Cref{sec: threat-avoiding}, we analyse the threat-avoiding version of the \tg and prove \Cref{thm: threatavoiding}.
    \Cref{thm: budget K_4,thm: non-budget K_4}, which give upper bounds on the threshold bias of the \kfourg with and without a budget, respectively, are proved in \Cref{sec: K_4 game}.
    The lower bound for the \kfourg is proved in Appendix~\ref{sec: appendix lower bound kfourg}.
    Furthermore, \Cref{prop: H-game thresholds} is proved in Appendix~\ref{sec: appendix asymp bound}.
    In the concluding section, \cref{sec: Concluding}, we state some open problems and possible directions for further research.
\end{paragraph}

%% file: Chapter/2_budget_triangle.tex
\section{The \texorpdfstring{\btg}{}}\label{sec: budget triangle game}

In this section, we study the \btg and prove \Cref{thm: budget triangle} which determines $\qTG^b(n)$ precisely. For completeness, we restate the theorem here.

\thmbudgettriangle*

We start with the lower bound.

\begin{restatable}{lemma}{tglowerbound}\label{lem: tg lower bound}
For every integer $n\ge 3$, we have $\qTG^b(n)\ge\ceil{\sqrt{2n-2}-\frac{3}{2}}$.
\end{restatable}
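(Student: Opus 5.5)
The plan is to adapt the star strategy of Chvátal and \erdos{} and check that Breaker's budget does not help him against it. The key point is that the strategy never depends on how Breaker distributes his edges over time. It only uses the total number of edges Breaker has claimed, which is at most $q$ times the number of Maker's edges, with or without a budget. So fix $q\le \ceil{\sqrt{2n-2}-\frac32}-1$, which means $q<\sqrt{2n-2}-\frac32$. We must exhibit a Maker winning strategy in the \btg with this bias.

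\emph{Strategy.} Maker fixes a vertex $r$ and keeps her graph a star centred at $r$ for as long as she does not win. Suppose that at the start of some Maker turn her star has $k\ge 0$ leaves $L$.
\begin{enumerate}
\item If some pair $\{u,v\}\subseteq L$ is unclaimed, she claims it. This completes the triangle $ruv$ and she wins.
\item Otherwise, she claims an unclaimed edge $rw$ with $w\notin L\cup\{r\}$, if one exists.
\end{enumerate}
It remains to show that Maker is never stuck, meaning that neither option is available.

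\emph{Counting.} Suppose Maker is stuck at the start of her turn, with $k$ leaves. Then all $\binom{k}{2}$ pairs inside $L$ are Breaker edges. Likewise, every edge $rw$ with $w\notin L\cup\{r\}$ is claimed, and since Maker's only edges are star edges, each of these $n-1-k$ edges belongs to Breaker. These two families of edges are disjoint. On the other hand, Maker has claimed exactly $k$ edges so far, so by the budget rule Breaker owns at most $qk$ edges in total. Hence
\[
\binom{k}{2}+(n-1-k)\le qk,
\qquad\text{that is,}\qquad
\tfrac12k^2-\left(q+\tfrac32\right)k+(n-1)\le 0 .
\]
The left-hand side is a quadratic in $k$ with discriminant $\left(q+\tfrac32\right)^2-2(n-1)$. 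This is negative exactly when $q<\sqrt{2n-2}-\tfrac32$, which is our assumption. So the quadratic is positive for every real $k$, a contradiction. Therefore Maker is never stuck.

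\emph{Termination.} Each time Maker does not win immediately, she adds a new leaf, and there are only $n-1$ possible leaves. Hence Maker eventually wins through option (i). The boundary case $k=n-1$ needs no separate treatment: then option (ii) is vacuous, and the counting above still forces an unclaimed leaf pair.

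The only subtle step is convincing oneself that the budget gives Breaker nothing here. The contradiction uses only the cumulative bound $e(G_B)\le q\,e(G_M)$ at the moment Maker moves. Breaker's freedom to delay or concentrate his edges does not change that total, and Maker's strategy reacts only to the current board. In particular, Maker never needs to create more than one threat at a time, so Breaker's ability to answer many threats in one turn is irrelevant.
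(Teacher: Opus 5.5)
Your proof is correct and follows the paper's argument. Both use the Chv\'atal--Erd\H{o}s star strategy, count $\binom{k}{2}+(n-1-k)\le qk$ Breaker edges at the moment Maker would be stuck, and derive a contradiction from the negative discriminant when $q<\sqrt{2n-2}-\frac32$. The only difference is cosmetic: you let Maker complete a triangle as soon as an unclaimed leaf pair appears, whereas the paper first exhausts the edges at the centre, and this does not change the counting.
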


To prove \Cref{lem: tg lower bound}, we have to find a winning strategy for Maker if $q<\sqrt{2n-2}-\frac{3}{2}$. It turns out that the same strategy works as the one described by \cite{chvatal1978biased} when they initiated the study of the \tg. There, Maker builds a star with arbitrary centre $r$ and it can be shown that if $q< \sqrt{2n-2}-\frac{3}{2}$, once Maker can no longer add a new edge to $r$, there are two edges $\{u,r\}$ and $\{w,r\}$ in $G_M$ such that the edge $\{u,w\}$ is still unclaimed, as otherwise the number of edges Breaker claimed is larger than $q\cdot e(G_M)$. Since $q\cdot e(G_M)\ge e(G_B)$ is also a constraint in the \btg, the very same analysis also works here to prove \Cref{lem: tg lower bound}. We do not repeat the full argument here, but for completeness, the proof of \Cref{lem: tg lower bound} is written up in \Cref{sec: lower bound}.

\medskip

Next, we consider the upper bound on $\qTG^b(n)$, for which we have to describe Breaker's winning strategy for the case $q\ge \sqrt{2n-2}-\frac{3}{2}$.
Recall that an edge $\{u,v\}$ is called a \emph{threat} if it is unclaimed and there is a vertex $w$ such that $\{u,w\}\in E(G_M)$ and $\{w,v\}\in E(G_M)$. If Breaker does not claim all newly created threat edges in his next turn, then Maker can win in the next round.

The idea behind the strategy of Breaker comes from the following observation. If Maker claims the edge $\{u,v\}$, then all newly created threat edges are either incident to $u$ or~$v$. More precisely, if $w$ is a neighbour of $u$ in $G_M$, then $\{w,v\}$ becomes a threat unless it is already claimed. Similarly, $\{w,u\}$ becomes a threat if $w$ is a neighbour of $v$ in $G_M$ unless it is already claimed. Therefore, Maker creates approximately $\degM(u)+\degM(v)$ threats when claiming $\{u,v\}$. Thus, if $u$ or $v$ has a large Maker degree, a lot of threats could be created. 
This is why, in our strategy, Breaker not only closes all threats in his turn but also makes sure that no vertex will ever have a Maker degree larger than some threshold $\theta$. As soon as a vertex $v$ reaches a Maker degree of $\theta$, Breaker claims all edges incident to $v$. 

It remains to determine the value of $\theta$. In the analysis of Breaker's strategy, we will no longer consider a total budget. Instead, each vertex will have its own budget, and Breaker edges are \enquote{paid for} by their endpoints. We will see that as soon as a vertex has a Maker degree of $q+1$, it could pay for all incident edges by itself; hence, we set $\theta$ to $q+1$. Now, we are ready to formulate the entire strategy. 

\begin{strategy}\label{strat: Breaker triangle}
    At each turn, Breaker does the following:
    \begin{enumerate}
        \item\label{enum: threat} Claim all threat edges.
        \item\label{enum: close} For every vertex $v$, if $\degM(v)=q+1$, claim all unclaimed edges incident to $v$.
    \end{enumerate}
     We call the process of claiming all edges incident to $v$ in \ref{enum: close} \emph{closing} $v$. Vertices for which this has been done are called \emph{closed}; all others are called \emph{open}.
\end{strategy}

Before we prove that this strategy really is a winning strategy for $q\ge\sqrt{2n-2}-\frac{3}{2}$, we first show that this desired bound is equal to another expression, which will be the expression we get from the analysis of the strategy.

\begin{lemma}\label{lem: ceil eq}
    For all positive integers $n$, we have 
    \[\ceil{\sqrt{2n-2}-\frac{3}{2}}=\ceil{\sqrt{2n-\frac{7}{4}}-\frac{3}{2}}.\]
\end{lemma}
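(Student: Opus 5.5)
Since $\sqrt{2n-2}<\sqrt{2n-\frac74}$, the left-hand side is at most the right-hand side. The plan is to show that the two expressions cannot straddle an integer. Suppose for contradiction that there is an integer $k$ with
\[\sqrt{2n-2}-\tfrac32\le k<\sqrt{2n-\tfrac74}-\tfrac32,\]
that is, $\sqrt{2n-2}\le k+\frac32<\sqrt{2n-\frac74}$. If no such $k$ exists, both ceilings coincide.

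Both sides of this inequality are nonnegative for $n\ge 1$, and if $k+\frac32<0$ the left inequality already fails. So we may assume $k\ge -1$ and square everything:
\[2n-2\le \big(k+\tfrac32\big)^2<2n-\tfrac74.\]
Expanding gives $(k+\frac32)^2=k^2+3k+\frac94$. Multiplying by $4$ yields
\[8n-8\le 4k^2+12k+9<8n-7.\]
The middle quantity is an integer, so it must equal $8n-8$. But $4k^2+12k+9$ is odd while $8n-8$ is even, a contradiction. Hence no such integer $k$ exists, and the two ceilings are equal.

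The only point needing care is the squaring step: the argument must justify that $k+\frac32\ge 0$ before squaring. This is ensured because $k+\frac32\ge\sqrt{2n-2}\ge 0$ for $n\ge1$. Beyond that, the argument is a short parity check.
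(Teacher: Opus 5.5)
Your proposal is correct and follows essentially the same argument as the paper: assume an integer $k$ lies in $\bigl[\sqrt{2n-2}-\tfrac32,\sqrt{2n-\tfrac74}-\tfrac32\bigr)$, square and multiply by $4$ to get $8n-8\le(2k+3)^2<8n-7$, and derive a parity contradiction. Your explicit check that $k+\tfrac32\ge 0$ before squaring is a small extra detail that the paper leaves implicit.
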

\begin{proof}
    Suppose there is a positive integer $n$ such that $\ceil{\sqrt{2n-2}-\frac{3}{2}}<\ceil{\sqrt{2n-\frac{7}{4}}-\frac{3}{2}}$. That means that there is an integer $k$ such that
    \begin{align*}
    &&\sqrt{2n-2}-\frac{3}{2}\le k&<\sqrt{2n-\frac{7}{4}}-\frac{3}{2}\\
    &\Longrightarrow&2n-2\le\left(k+\frac{3}{2}\right)^2&<2n-\frac{7}{4}\\
    &\Longrightarrow&8n-8\le\left(2k+3\right)^2&<8n-7.
    \end{align*}
    In the last line, all terms are integers. Therefore, we get $8n-8=\left(2k+3\right)^2$, which is impossible as the left-hand side is even, whereas the right-hand side is odd. 
\end{proof}

Now, we are ready to finish the proof of \Cref{thm: budget triangle}.

\begin{proof}[Proof of \Cref{thm: budget triangle}.]
    From \Cref{lem: tg lower bound}, we already know that $        \qTG^b(n)\ge\ceil{\sqrt{2n-2}-\frac{3}{2}}$.
    Therefore it remains to prove that for every integer $n\ge 3$, if $q$ is an integer with $q\ge \sqrt{2n-2}-\frac{3}{2}$, Breaker wins the \btg. Indeed, we prove that Breaker wins using \Cref{strat: Breaker triangle}.
    
Fix an integer $n\ge 3$ and let $q$ be an integer with $q\ge \sqrt{2n-2}-\frac{3}{2}$.
By \ref{enum: threat} of \Cref{strat: Breaker triangle}, Maker will never claim a triangle. All we have to show is that Breaker can always perform the steps \ref{enum: threat} and \ref{enum: close} of \Cref{strat: Breaker triangle} without claiming more than $q\cdot  e(G_M)$ edges. Because of \ref{enum: close}, we can immediately conclude
\begin{equation}\label{eq: max M deg}
    \degM(v)\le q+1\hspace{1cm}\forall~ v\in V(G)
\end{equation}
at every time during the game.

The idea of the analysis is the following. Instead of having a single total budget for Breaker, we divide his budget among the vertices; more precisely, every claimed Maker edge distributes $q/2$ budget to both its endpoints. Furthermore, edges claimed by Breaker because of threats are then paid for by their endpoints with $1/2$ budget each, whereas edges claimed while closing a vertex are fully paid for by the vertex that is being closed. Our goal is to prove that no vertex spends more budget than it was assigned. 

Following this idea, we maintain a budget function $\bud\colon V(G)\to \mathbb R$, defined as
\[\bud(v)=\begin{cases}
    \frac{q}{2}\cdot \degM(v)-\frac{1}{2}\degB'(v)&\text{$v$ open}\\0&\text{$v$ closed,}
\end{cases}\]
where $\degB'(v)$ is the number of edges incident to $v$ that were added because of \ref{enum: threat}, i.e., because they were threat edges.

\begin{claim}\label{cl: c(v) nonnegative}
    Throughout the game, $\bud(v)\ge 0$ for all $v\in V(G)$.
\end{claim}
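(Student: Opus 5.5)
The plan is to prove the stronger inequality $\degB'(v)\le q\cdot\degM(v)$ for every open vertex $v$, at every moment of the game. Closed vertices have $\bud(v)=0$ by definition, so for them there is nothing to show. For an open vertex, the inequality gives
\[\bud(v)=\tfrac q2\degM(v)-\tfrac12\degB'(v)\ge \tfrac q2\degM(v)-\tfrac q2\degM(v)=0.\]
Edges that Breaker claims while closing some other vertex are not counted in $\degB'(v)$, so they never reduce $\bud(v)$ and can be ignored.

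To bound $\degB'(v)$, I charge each threat edge at $v$ to a path of Maker edges. An edge $\{v,w\}$ counted in $\degB'(v)$ was a threat when Breaker claimed it. So at that moment there was a vertex $x$ with $\{v,x\},\{x,w\}\in E(G_M)$, where $w\ne v$. Fixing one such $x$ for each edge gives an injective map
\[\{v,w\}\mapsto(x,w),\qquad x\in N_{G_M}(v),\quad w\in N_{G_M}(x)\setminus\{v\}.\]
It is injective because $w$ determines the edge $\{v,w\}$. Hence it suffices to show that, for each fixed Maker neighbour $x$ of $v$, at most $q$ vertices $w$ arise as images.

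The key point is \eqref{eq: max M deg}: $\degM(x)\le q+1$ throughout the game. This holds because as soon as $\degM(x)=q+1$, Breaker closes $x$ in step \ref{enum: close} of the same turn, and no further Maker edge can then be added at $x$. Since $v$ itself is one of the Maker neighbours of $x$, there are at most $q$ choices of $w\in N_{G_M}(x)\setminus\{v\}$. Summing over the at most $\degM(v)$ choices of $x$ gives $\degB'(v)\le q\cdot\degM(v)$.

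The main thing to check carefully is the timing within a turn. Maker's new edge may raise $\degM(x)$ to $q+1$ and create threats in the same round, before $x$ is closed. This does not break the bound, because \eqref{eq: max M deg} is never violated, even momentarily. Two smaller points also need confirming. First, each threat edge is claimed exactly once, so it contributes once to $\degB'(v)$. Second, an open vertex $v$ itself has $\degM(v)\le q+1$, but the argument above never uses any bound on $\degM(v)$, so this causes no difficulty.
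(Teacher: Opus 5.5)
Your proposal is correct and follows the paper's proof essentially verbatim. Both arguments establish $\degB'(v)\le q\cdot\degM(v)$ for open $v$ by attributing each threat edge $\{v,w\}$ to a Maker neighbour $x$ of $v$, and then use \eqref{eq: max M deg} to allow at most $q$ choices of $w\ne v$ per $x$. Your explicit injective map and timing remarks only spell out details the paper leaves implicit.
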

\begin{claimproof}
    Fix any vertex $v\in V(G)$ and any time of the game. If $v$ is closed, then there is nothing to show. Let $E'$ be the set of edges incident to $v$ that were added because they were threat edges. In particular, $\abs{E'}=\degB'(v)$. Each $\{u,v\}\in E'$ was added because there was a vertex $w$ such that $\{u,w\},\{w,v\}\in E(G_M)$. By \eqref{eq: max M deg}, for every $w\in N_{G_M}(v)$, there can be at most $q$ many nodes $u\ne v$ such that $\{u,w\}\in E(G_M)$. Hence, $\degB'(v)\le q\cdot\degM(v)$. Therefore, $\bud(v)\ge 0$.
\end{claimproof}
\begin{claim}\label{cl: claim 2}
    Throughout the game, we always have \begin{equation}\label{eq: claim 2}\sum_{v\in V(G)}\bud(v)+e(G_B)\le q\cdot e(G_M).
    \end{equation}
\end{claim}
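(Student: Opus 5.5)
The plan is to prove \eqref{eq: claim 2} by induction over the moves of the game, checking that each elementary action never increases the difference $q\cdot e(G_M)-\sum_v\bud(v)-e(G_B)$ by a negative amount. At the start all quantities are $0$. I split every round into three phases: Maker's move, Breaker's step \ref{enum: threat}, and Breaker's step \ref{enum: close}. I treat the closings in step \ref{enum: close} one vertex at a time, so that the edges claimed while closing a vertex are always counted as unclaimed at that moment.

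\emph{Maker's move.} Maker claims $\{u,v\}$, so the right-hand side increases by exactly $q$. An open endpoint has its budget increased by $q/2$, and a closed endpoint keeps budget $0$. Hence the left-hand side increases by at most $q$, and the inequality is preserved.

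\emph{Step \ref{enum: threat}.} Claiming a threat edge $\{x,y\}$ increases $e(G_B)$ by $1$. Both $x$ and $y$ must be open: every edge incident to a closed vertex was already claimed when that vertex was closed, so it cannot be an unclaimed threat. A vertex that reached Maker degree $q+1$ in this round has not yet been closed and so is still open. Claiming the edge therefore raises $\degB'$ of each endpoint by one, so each budget drops by $1/2$. The total budget drops by exactly $1$, and the left-hand side is unchanged.

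\emph{Step \ref{enum: close}.} This is the key step. When $v$ with $\degM(v)=q+1$ is closed, $\bud(v)$ drops from $\frac q2(q+1)-\frac12\degB'(v)$ to $0$. No other budget changes, since closing edges are not counted in $\degB'$. It therefore suffices to show that the number of edges claimed is at most this old budget. The number of unclaimed edges at $v$ is at most
\[n-1-\degM(v)-\degB'(v)=n-q-2-\degB'(v).\]
Because $\degB'(v)\ge \frac12\degB'(v)$, it is enough to show $n-q-2\le \frac{q(q+1)}2$, that is, $q^2+3q+4-2n\ge0$, or equivalently $\bigl(q+\frac32\bigr)^2\ge 2n-\frac74$.

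The main obstacle is deriving this last inequality from the hypothesis on $q$. Since $q$ is an integer with $q\ge\sqrt{2n-2}-\frac32$, we have $q\ge\ceil{\sqrt{2n-2}-\frac32}$. By \Cref{lem: ceil eq} this ceiling equals $\ceil{\sqrt{2n-\frac74}-\frac32}$, so $q\ge\sqrt{2n-\frac74}-\frac32$, which is exactly the required inequality. Each closing therefore does not increase the left-hand side, which completes the induction.
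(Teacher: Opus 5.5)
Your proposal is correct and follows essentially the same route as the paper's proof: induction over Maker's move, Breaker's threat claims, and individual closings, with the closing step reduced to $q^2+3q+4-2n\ge 0$ via \Cref{lem: ceil eq}. Your extra justifications (why threat-edge endpoints are open, why closing edges leave other budgets unchanged, and handling closings one vertex at a time) make explicit details that the paper leaves implicit.
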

Once we have proven the claim, we are done because then Claim~\ref{cl: claim 2} together with \Cref{cl: c(v) nonnegative} immediately implies $e(G_B)\le q\cdot e(G_M)$, whence Breaker can always perform his strategy without claiming more edges than his budget allows.
\begin{claimproof}[Proof of Claim~\ref{cl: claim 2}]
    The statement holds at the beginning of the game. Suppose \eqref{eq: claim 2} holds and Maker claims an edge $\{u,v\}$. In particular, this means that $u$ and $v$ are open. Therefore, both sides of \eqref{eq: claim 2} increase by exactly $q$, whence it still holds after Maker's turn.

    Now suppose \eqref{eq: claim 2} holds and Breaker takes a turn. When he is claiming a threat edge $\{u,v\}$, it means that $u$ and $v$ are still open. Therefore, $e(G_B)$ increases by 1 whereas $\bud(u)$ and $\bud(v)$ both decrease by $\frac{1}{2}$. Thus, both sides of \eqref{eq: claim 2} do not change and \eqref{eq: claim 2} still holds after Breaker claims $\{u,v\}$. 

    Finally, we have to consider the case that \eqref{eq: claim 2} holds and Breaker closes a vertex $v$. That means that he claims $n-1-\degM(v)-\degB(v)=n-1-(q+1)-\degB(v)\le n-1-(q+1)-\frac{1}{2}\degB'(v)$ edges in total, where the inequality uses $\degB(v)\ge \degB'(v)\ge \frac{1}{2}\degB'(v)$. Thus, $e(G_B)$ increases by at most $n-q-2-\frac{1}{2}\degB'(v)$. Meanwhile, $\bud(v)$ decreases by $\frac{q}{2}\cdot\degM(v)-\frac{1}{2}\degB'(v)=\frac{q^2+q}{2}-\frac{1}{2}\degB'(v)$. To show that \eqref{eq: claim 2} still holds, it is enough to show 
    \begin{align*}
        &&n-q-2-\frac{1}{2}\degB'(v)&\le \frac{q^2+q}{2}-\frac{1}{2}\degB'(v)\\
        &\!\iff&0&\le q^2+3q-(2n-4)\\
        &\Longleftarrow&q&\ge -\frac{3}{2}+\sqrt{\frac{9}{4}+2n-4}=-\frac{3}{2}+\sqrt{2n-\frac{7}{4}}.  
    \end{align*}
    As we picked $q$ such that $q\ge \ceil{\sqrt{2n-2}-\frac{3}{2}}=\ceil{\sqrt{2n-\frac74}-\frac{3}{2}}$, where the equality follows from \Cref{lem: ceil eq}, the inequality above is satisfied by the definition of $q$, which concludes the proof of the claim.
    \end{claimproof}
    Therefore $\qTG^b \leq \ceil{\sqrt{2n-2}-\frac{3}{2}}$, which concludes the proof.
\end{proof}

The overall goal is to determine the threshold function of the original non-budget Triangle Game. Therefore, one might ask how much we actually used the additional power of having a budget in \Cref{strat: Breaker triangle}. For this, we need a way to measure the budget usage of a winning strategy in the \btg.

\begin{definition}\label{def: budget usage}
    Consider a concrete play of the \btg and let $b\coleq (b_1,\dots,b_k)$ be the \emph{move sequence} where $b_i$ is the number of edges Breaker claimed in round $i$. Let $b'\coleq (b_1',\dots,b_k')$ be the \emph{excess sequence} where $b_i'\coleq \max(b_i-q,0)$. The \emph{budget usage} is then defined as 
    \[\budget(b)\coleq\sum_{i=1}^kb_i'.\]

    If $\mathcal S$ is a winning strategy for Breaker in the \btg, then its \emph{budget usage} $\budget(\mathcal S)$ is defined as the smallest integer $\ell$ such that, whenever Breaker plays according to strategy $\mathcal S$, his play will have a budget usage of at most~$\ell$.  
\end{definition}

For our \Cref{strat: Breaker triangle}, we can show that its budget usage has, asymptotically, the worst possible budget usage. 

\begin{restatable}{proposition}{propbudgetusage}\label{prop: budget usage}
     Let $q = \qTG^b(n)$ and let $\mathcal S$ be \Cref{strat: Breaker triangle} for the \btg played on $K_n$. Then $\budget(\mathcal S)=\Theta\big(n^2\big)$.
\end{restatable}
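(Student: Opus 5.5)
Both bounds are needed. The upper bound $\budget(\mathcal S)=O(n^2)$ is immediate: the budget usage of any play is at most $\sum_i b_i\le e(G_B)\le\binom n2$.

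For the lower bound, I will describe a Maker strategy against \Cref{strat: Breaker triangle} that forces a single Breaker turn with $\Omega(n^2)$ excess, or $\Omega(n)$ turns each with $\Omega(n)$ excess. Closing a vertex costs about $n-q-2-\degB(v)$ edges, and $q=\Theta(\sqrt n)$. So it suffices that Maker brings many vertices to Maker degree $q+1$ in the same turn, each with small Breaker degree, and does so repeatedly.

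\textbf{Construction.} Maker proceeds in phases. In each phase she picks a fresh vertex $r$ that is still open and has small Breaker degree. She claims edges $\{r,x_1\},\{r,x_2\},\dots$ to fresh leaves. Each new star edge creates threats $\{x_i,x_j\}$, and Breaker claims them by step \ref{enum: threat}; these are at most $q$ per turn while $\degM(r)\le q$. When $\degM(r)$ reaches $q+1$, step \ref{enum: close} closes $r$ in that turn. This costs $n-1-(q+1)-\degB(r)$ edges. Only threat edges are incident to $r$, and since leaves are fresh and pairwise edges between leaves do not touch $r$, $\degB(r)$ stays small (essentially $0$). Hence that turn claims at least $n-q-2-q\ge n-3q$ edges. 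Its excess is therefore at least $n-4q=\Omega(n)$.

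\textbf{Fresh vertices.} Each phase uses $q+2=O(\sqrt n)$ vertices: the centre plus its leaves. Breaker's edges added during the phase touch only those vertices and the closed centre. I need to check that a new centre $r'$, chosen among untouched vertices, has Breaker degree equal only to the number of closed centres it is adjacent to, i.e. at most the number of previous phases. Running $\Theta(\sqrt n)$ phases uses $O(n)$ vertices; I would choose constants so that at most $n/2$ vertices are used. Then each closing costs at least $n-q-2-O(\sqrt n)=\Omega(n)$ edges. This gives $\Omega(\sqrt n)$ turns with $\Omega(n)$ excess, i.e. $\Omega(n^{3/2})$ in total, which is not yet $n^2$.

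\textbf{Reaching $\Theta(n^2)$ (main obstacle).} To get $\Theta(n^2)$, many closings must happen in one turn, or $\Theta(n)$ closings each of cost $\Omega(n)$. Leaves can be reused: a leaf $x$ has Maker degree $1$ after its phase, and can serve as a leaf for other centres until its own degree reaches $q+1$. I would organise the centres into $\Theta(n)$ centres $r_1,\dots,r_m$ and a leaf set $L$ of size about $n/2$, with each $r_j$ joined to $q+1$ leaves in $L$. This is a Maker bipartite graph of maximum degree $q+1$, with $m(q+1)\le |L|(q+1)$. Breaker's threat edges then lie inside $L$ and between centres sharing leaves; centres are pairwise non-adjacent in $G_M$. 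The delicate point is keeping each $\degB(r_j)$ at $o(n)$ when it closes. Edges from previously closed centres contribute at most the number of closed centres. Threat edges $\{r_j,r_k\}$ arise whenever two centres share a leaf. To control this I would use a design-like assignment in which each leaf serves a few centres and the codegree structure is sparse. For example, I would first let every centre grow simultaneously up to degree $q$ in a round-robin, then finish them one by one. Then half the centres close when $\degB(r_j)\le n/4$, which gives $\Omega(n)\cdot\Omega(n)=\Omega(n^2)$ excess. Verifying this degree bookkeeping is where I expect the real work to lie.
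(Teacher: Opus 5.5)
\textbf{Gap: the proposal only proves $\Omega(n^{3/2})$.} Your upper bound is correct. Your last paragraph has the right plan: build a bipartite Maker graph in which every vertex stays at Maker degree at most $q$, then push vertices sitting at degree exactly $q$ to $q+1$, one per round. This is essentially the paper's construction, which uses a $q$-regular bipartite circulant graph on $2\lfloor n/4\rfloor$ vertices.

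The step meant to give $\Omega(n^2)$ is asserted and explicitly left unverified. That step is the claim that a linear number of centres close while $\deg_B(r_j)\le n/4$. The device you propose for it is a design-like assignment with sparse codegrees so that $\deg_B(r_j)=o(n)$. It is never specified, and it targets a much stronger property than needed. Breaker's edges from $r_j$ to other centres number at most $m-1$ whatever the codegrees are.

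Per-vertex bookkeeping of $\deg_B(r_j)$ is also not the clean route. During the pushing phase Breaker can claim edges from the used vertices to outside vertices too. For instance, a pushing edge $\{v,x\}$ with $x$ outside creates threats joining $x$ to all Maker neighbours of $v$.

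\textbf{The missing idea is a global count over the edges leaving the used vertex set.} Let $S$ be the vertex set of the bipartite Maker graph, with $|S|=\Theta(n)$ and $|S|\le n/2$.

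First, the build-up phase. No vertex reaches degree $q+1$, so Breaker only claims threat edges. Each threat joins two Maker neighbours of a common vertex, so it lies inside $S$ and on one side of the bipartition. In particular, Breaker never takes an edge Maker still needs. Hence afterwards all $|S|(n-|S|)$ edges between $S$ and $V\setminus S$ are unclaimed, regardless of the structure inside $S$.

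Second, the pushing phase. Each round Maker claims an unclaimed edge at a vertex of $S$ that still has one; that vertex has Maker degree exactly $q$, so Breaker must close it. After at most $|S|$ rounds every edge between $S$ and $V\setminus S$ is claimed, at most $|S|$ of them by Maker.

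So Breaker claims at least $|S|(n-|S|)-|S|=\Omega(n^2)$ edges in at most $|S|$ rounds. His allowance over those rounds is only $|S|q=O(n^{3/2})$, which gives $\budget(\mathcal S)=\Omega(n^2)$.

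In your centre/leaf setting, leaves may finish below degree $q$. Apply the same count only to the $m=\Theta(n)$ centres, which sit at degree exactly $q$ after the round-robin, and to their edges into $V\setminus S$. No design is needed.
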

\begin{proof}
    As Breaker claims at most $\binom{n}{2}$ edges, we have $\budget(\mathcal S)\le \binom{n}{2}=\O\big(n^2\big)$.

    For the other direction, we give a strategy for Maker such that, if Breaker plays according to \Cref{strat: Breaker triangle}, then his play has a budget usage of $\Omega\big(n^2\big)$. For this, let $k\coleq \floor{\frac{n}{4}}$ and take $2k$ distinct vertices $u_1,\dots,u_k,w_1,\dots,w_k$. Note that $q < k$. Let $G$ be the bipartite graph with vertex set $\{u_1,\dots,u_k,w_1,\dots,w_k\}$ in which $u_i$ is connected to $w_i,w_{i+1},\dots,w_{i+q-1}$ where the indices are considered mod~$k$. One can quickly see that $G$ is $q$-regular.
    
    Maker's strategy is to successively claim the edges of $G$. As no vertex reaches a degree of $q+1$, Breaker will not close a vertex while Maker claims the edges of~$G$. Breaker will, however, close threats, but as $G$ is bipartite, it is triangle-free, so Breaker will never claim an edge of $G$. Therefore, Maker can claim all edges of~$G$.

    Then, all $u_1,\dots,u_k,w_1,\dots,w_k$ have a Maker degree of $q$ and all other vertices are still isolated in both $G_M$ and $G_B$. In the next $\le 2k$ rounds, while there is still a vertex $v\in \{u_1,\dots,u_k,w_1,\dots,w_k\}$ which is incident to an unclaimed edge, Maker will claim an unclaimed edge incident to~$v$. This forces Breaker to close~$v$. Thus, after the next $\le 2k$ rounds, all vertices in $\{u_1,\dots,u_k,w_1,\dots,w_k\}$ are closed, so Breaker claimed at least 
    \[2k\cdot (n-2k)-2k=\Omega\big(n^2\big)\]
    edges in $2k$ rounds. Since \Cref{thm: budget triangle} implies $2k\cdot q=\O\big(n^{3/2}\big)$, also the sum of the $2k$ corresponding entries of the excess sequence is $\Omega\big(n^2\big)$, so $\budget(\mathcal S)=\Omega\big(n^2\big)$. 
\end{proof}

It is an open question whether there are winning strategies for Breaker with which he can win the $(1:q)$ Budget Triangle Game with $q=\qTG^b(n)$ whose budget usage is $o(n^2)$. We discuss this further in the concluding remarks in \Cref{sec: Concluding}.

%% file: Chapter/3_threatavoiding.tex
\section{The \texorpdfstring{\btatg}{}}\label{sec: threat-avoiding}

In this section, we consider the \btatg, a special variant of the \btg in which Breaker can only claim at most $q$ threat edges per turn. In particular, Maker wins not only if she claims a triangle but also if she creates more than $q$ threats in one turn.
In the original \tg, creating $q+1$ threats guarantees a Maker win in the next move, while in the budget version, Breaker can use his budget to close the extra threats.
Thus, solving the \btatg is a natural step on the path of converting a strategy for the \btg into a non-budget strategy for the original \tg.

The goal of this section is to prove the following:

\threatavoiding*

The lower bound of $\qTG^t(n)$ follows directly from \Cref{lem: tg lower bound}. Therefore, we only need to show the upper bound.

For the remainder of the section, let $q$ be the smallest \emph{even} integer satisfying $q \geq \left\lceil\sqrt{2n}\right\rceil$. Note that $q\le \left\lceil\sqrt{2n}\right\rceil+1$. The assumption that $q$ is even makes some of the calculations in the proof easier. We believe that a more thorough analysis can avoid this assumption. In that case, the $+1$ in the upper bound of \Cref{thm: threatavoiding} can be removed.

To prove that Breaker has a winning strategy for the \btatg with the given $q$, we extend Breaker's strategy for the \btg.
In addition to threat and closing edges, Breaker now also exhaustively claims two more types of edges: The first type consists of \emph{dangerous} edges that, if not claimed by Breaker, would allow Maker to create more than $q$ threats in the next turn and thereby lead to her immediate win. The second consists of \emph{high-degree} edges connecting high-degree vertices.

\begin{definition}
    A vertex $v$ is of \emph{high degree} (resp. \emph{low degree}) if $\degM(v)\geq q/2$ (resp. $\degM(v)<q/2$). An edge is \emph{high-degree} if both endpoints are high-degree. For an unclaimed edge $e$, denote by $\threats(e)$ the number of threats Maker would create by claiming it. 

    An edge $\{v,w\}$ is \emph{unclaimed dangerous} if it is unclaimed and $\threats(\{v,w\}) > q$. An edge $\{v,w\}$ is \emph{dangerous} if it was claimed by Breaker as an unclaimed dangerous edge and is not a high-degree edge.
\end{definition}
\cref{fig:dangerous} shows an example of a dangerous edge. With these definitions, we are now able to state Breaker's strategy.

\begin{figure}
    \centering
    \begin{tikzpicture}[
        scale=0.9,
        transform shape,
        vertex/.style={
            circle,
            fill=black,
            inner sep=2.2pt
        },
        makeredge/.style={
            draw=black,
            line width=1.1pt
        },
        dangerousedge/.style={
            draw=green!50!black,
            line width=1.5pt
        },
        threatedge/.style={
            draw=red,
            dashed,
            line width=0.5pt
        },
        breakeredge/.style={
            draw=blue,
            line width=1.3pt
        },
        partial/.style={
            draw=black,
            line width=0.9pt
        }
    ]
        
        \node[vertex] (v) at (0,0) {};
        \node[vertex] (w) at (6,0) {};
        
        \node at (0,-0.5) {$v$};
        \node at (6,-0.5) {$w$};
        
        \draw[dangerousedge] (v) -- (w);

        
        \node[vertex] (a1) at (-2.0,1.25) {};
        \node[vertex] (a2) at (-1.25,2.05) {};
        \node[vertex] (a3) at (-0.35,2.65) {};
        
        \node[vertex] (b1) at (1.80,3.30) {};
        \node[vertex] (b2) at (2.90,3.40) {};
        \node[vertex] (b3) at (4.00,3.35) {};
        \node[vertex] (b4) at (5.10,3.15) {};
        \node[vertex] (b5) at (6.15,2.80) {};
        \node[vertex] (b6) at (7.10,2.25) {};
        \node[vertex] (b7) at (7.90,1.50) {};

        \draw[makeredge] (v) -- (a1);
        \draw[makeredge] (v) -- (a2);
        \draw[makeredge] (v) -- (a3);

        \foreach \x in {b1,b2,b3,b4,b5,b6,b7}
            \draw[makeredge] (w) -- (\x);

        
        \draw[threatedge] (a2) -- (w);
        
        \foreach \x in {b1,b2,b3,b4,b5,b6,b7}
            \draw[threatedge] (v) -- (\x);

        
        \foreach \angle in {55,75,95,115,135}
            \draw[partial] (a1) -- ++(\angle:0.55);
        
        \foreach \angle in {45,65,85,105,125}
            \draw[partial] (a3) -- ++(\angle:0.55);
        
        \draw[breakeredge] (a1) -- (w);
        \draw[breakeredge] (a3) -- (w);
        
    \end{tikzpicture}
    
    \caption{ Example of a dangerous unclaimed edge for $q=7$. Maker edges are marked black. High-degree Breaker edges are marked blue. Maker claiming the green edge $\{v,w\}$ would introduce $q+1$ threats (marked in red). Thus, $\{v,w\}$ is an unclaimed dangerous edge. High-degree neighbours of $v$ would not create threats with $\{v,w\}$, since $w$ is high-degree and Breaker claims high-degree edges.}
    \label{fig:dangerous}
\end{figure}

\begin{strategy}(Breaker's strategy for the \btatg)
    \label{strat:TA}
    At each turn, Breaker does the following:
    \begin{enumerate}
        \item\label{strat:TA claim all} If all remaining unclaimed edges can be claimed in this turn, claim all remaining unclaimed edges.
        \item\label{strat:TA threat} Claim all threat edges.
        \item\label{strat:TA high} Claim all unclaimed high-degree edges.
        \item\label{strat:TA closing} For every vertex $v$, if $\degM(v)=q$, claim all unclaimed edges incident to $v$.
        \item\label{strat:TA dangerous} Repeatedly claim one unclaimed dangerous edge until no unclaimed dangerous edges remain.
    \end{enumerate}
\end{strategy}
We call the process of claiming all edges incident to $v$ in \ref{strat:TA closing} \emph{closing} $v$, and the edges claimed during this step are called \emph{closing edges}. Vertices for which this process has been done are called \emph{closed}; all others are called \emph{open}. Note that the definition is slightly different from that in \Cref{sec: budget triangle game}. There, we only closed vertices if they reached a degree of $q+1$. Here, however, as $q\ge \sqrt{2n}$, we can afford to already close vertices when they reach a degree of~$q$.

One can quickly see that Breaker will never lose if he can follow this strategy, as claiming all dangerous edges prevents him from having to claim more than $q$ threat edges in the next round. Thus, we only need to show that if Breaker follows this strategy, he will never claim more than $q\cdot e(G_M)$ edges. This is clear if Breaker can perform step~\ref{strat:TA claim all} of \Cref{strat:TA} and claim all remaining edges. Hence, we can assume in the following that we are at a time of the game at which the condition of step~\ref{strat:TA claim all} is not satisfied yet.

To see that Breaker will always be able to perform \Cref{strat:TA}, we will proceed similarly to the last section and maintain a budget function that assigns the budget to the vertices, which is then used to pay for edges that are claimed by Breaker.

As we have seen in the last section, Breaker is able to pay for claimed threat edges and closing edges. High-degree edges can only occur when both endpoints are of high degree. Intuitively, this implies that Breaker has already accumulated enough budget at these vertices that he can use to pay for these edges. Hence, the crucial point of this analysis is to see how the dangerous edges can be paid for. 

We already remark here that the order in which Breaker claims edges is important: as we will see, it is vital that Breaker claims high-degree edges before determining and claiming all edges that are still dangerous (\cref{fig:dangerous}).
Note that whether an edge is dangerous can change during Breaker's turn. Crucially, in the strategy, Breaker only claims dangerous edges if they are still dangerous at the time of insertion, which allows us to bound the number of dangerous edges incident to any vertex (see \cref{lemma:dangerous n/2}).

\paragraph{Dangerous Edges.}

Since dangerous edges play such a crucial role, we start by observing a few basic properties of them.
\begin{observation}\label{obs:dangerousthreshold}
    Any unclaimed dangerous edge $e$ fulfils $\threats(e) = q+1$.  
\end{observation}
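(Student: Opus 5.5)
This is an invariant argument. We show that at the start of every Maker turn, i.e.\ right after Breaker has finished his turn according to \Cref{strat:TA}, no unclaimed edge $e$ has $\threats(e)>q$. The claim then follows at once. When Maker claims an edge $e'$, the threats count $\threats(e)$ of any other unclaimed edge $e$ grows by at most one, so every unclaimed edge has $\threats(e)\le q+1$ at Breaker's turn. During Breaker's turn the values $\threats(e)$ only go down: Breaker's edges never create threats, and claiming edges can only remove potential threats. An unclaimed dangerous edge is looked at only during step~\ref{strat:TA dangerous}, where by definition $\threats(e)>q$; together with $\threats(e)\le q+1$ this gives $\threats(e)=q+1$.

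The key step is a precise bound on how much $\threats(e)$ can increase during one Maker move. Let $e=\{v,w\}$ and let Maker claim $e'=\{x,y\}\neq e$. The threats created by claiming $e$ are the unclaimed edges $\{v,z\}$ with $z\in N_{G_M}(w)$ and $\{w,z\}$ with $z\in N_{G_M}(v)$; we count them as a set, so that the quantity is well defined. Maker's new edge adds a potential threat only if it raises $N_{G_M}(v)$ or $N_{G_M}(w)$. This happens exactly when $e'$ is incident to $v$ or $w$, and since $e'\neq e$ the edge $e'$ contains exactly one of $v,w$. Say $e'=\{v,y\}$ with $y\neq w$. Then the only new candidate is $\{w,y\}$. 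So $\threats(e)$ increases by at most one, and it increases by exactly one only when $\{w,y\}$ is unclaimed and was not already counted through $N_{G_M}(w)$.

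Next I check the base case and the end of Breaker's turn. At the start of the game every $\threats(e)=0\le q$. At the end of Breaker's turn, step~\ref{strat:TA dangerous} has run until no unclaimed dangerous edge is left, so every unclaimed edge satisfies $\threats(e)\le q$. One point needs care. An edge claimed early in the turn could, in principle, later make something dangerous again. But claims by Breaker only shrink the set of unclaimed potential threats, so the values are monotonically non-increasing and the loop terminates with the invariant intact.

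The main obstacle is being careful about what ``created'' means. One could worry that a single Maker move could create two new threats relevant to $e$, for instance when $e'$ joins $v$ to a vertex $y$ that is also adjacent in $G_M$ to $w$. I would handle this by making explicit that $\threats(e)$ counts distinct unclaimed edges that would become threats and are not already threats. If an edge is already a threat, Breaker claimed it in step~\ref{strat:TA threat}, so it is no longer unclaimed. This makes the increment argument above airtight.
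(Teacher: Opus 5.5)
Your proof is correct and is essentially the paper's argument: a Maker move raises $\threats(e)$ by at most one, and step (v) claims every unclaimed edge whose value exceeds $q$, so no value ever passes $q+1$. You make explicit two things the paper leaves implicit, namely the end-of-turn invariant and the fact that $\threats(e)$ cannot grow during Breaker's turn. Your last paragraph is unnecessary, and the worry in it does not arise. If $y\in N_{G_M}(w)$, then $\{w,y\}$ is already a Maker edge, so Maker's move $\{v,y\}$ produces no new candidate at all. The at-most-one bound therefore holds under either reading of ``create''.
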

\begin{proof}
    Maker claiming an edge increases $\threats(e)$ by at most one for all edges $e$. However, as soon as $\threats(e) = q+1$ at step~\ref{strat:TA dangerous} of Breaker's turn, $e$ will be claimed by Breaker. Hence, $\threats(e)$ cannot increase beyond $q+1$.
\end{proof}

\begin{observation}
\label{obs:dangerous1lo1hi}
    Every dangerous edge $\{v,w\}$ has one low-degree and one high-degree endpoint.
\end{observation}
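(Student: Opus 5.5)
The plan is to read \Cref{obs:dangerous1lo1hi} directly off the definitions. The only point that needs care is timing. Being \emph{unclaimed dangerous} is a condition at the moment Breaker claims the edge in step~\ref{strat:TA dangerous}. Not being a high-degree edge is a condition at the present moment. Throughout I use that Maker degrees never decrease, so a vertex that is high-degree at some time stays high-degree for the rest of the game.

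First I bound the number of threats an unclaimed edge can create. Suppose Maker claims an unclaimed edge $\{v,w\}$. Every newly created threat has the form $\{x,w\}$ with $x\in N_{G_M}(v)$, or $\{x,v\}$ with $x\in N_{G_M}(w)$. This is the same observation used to motivate \Cref{strat: Breaker triangle}. Hence
\[\threats(\{v,w\})\le \degM(v)+\degM(w),\]
where the degrees are taken before $\{v,w\}$ is claimed.

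Next I apply this at the moment $t$ when Breaker claimed the dangerous edge $\{v,w\}$. At that moment the edge was unclaimed dangerous, so $\threats(\{v,w\})>q$, and by \Cref{obs:dangerousthreshold} in fact $\threats(\{v,w\})=q+1$. Suppose both endpoints were low-degree at time $t$. Then $\degM(v),\degM(w)<q/2$, which gives $\threats(\{v,w\})<q$, a contradiction. So at time $t$ at least one endpoint, say $w$, had $\degM(w)\ge q/2$. By monotonicity of Maker degrees, $w$ is high-degree at every later time, in particular now.

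Finally, by definition a dangerous edge is not a high-degree edge at the current time. Since $w$ is currently high-degree, $v$ must currently be low-degree. Hence $\{v,w\}$ has exactly one low-degree and one high-degree endpoint.

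The only step needing attention is this timing issue: the high-degree endpoint is certified at claiming time, and the low-degree endpoint is certified at the current time. Monotonicity of $\degM$ reconciles the two, and apart from that the argument is a direct counting check.
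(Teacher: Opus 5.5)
Your proof is correct and is essentially the paper's argument. Both use $\threats(\{v,w\})\le\degM(v)+\degM(w)$ at claiming time to exclude two low-degree endpoints, and the clause ``is not a high-degree edge'' in the definition to exclude two high-degree endpoints; the only minor difference is that you apply that clause directly at the current time together with monotonicity of $\degM$, so you never need step~\ref{strat:TA high}, whereas the paper first invokes step~\ref{strat:TA high} at claiming time and then the definition for later times.
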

\begin{proof}
    Let us first consider the time when Breaker claims the dangerous edge $\{v,w\}$.
    Breaker claims dangerous edges in step~\ref{strat:TA dangerous} of his turn.
    If both endpoints were high-degree, the edge would have been previously claimed by Breaker as a high-degree edge in step~\ref{strat:TA high} of his turn. As Maker claiming $\{v,w\}$ can only create $\degM(v)+\degM(w)$ threats, an edge with two low-degree endpoints can never induce more than $q$ threats. Thus, $\{v,w\}$ has one low-degree and one high-degree endpoint at the time when Breaker claims $\{v,w\}$. 

    Later, this can only change if the low-degree endpoint becomes a high-degree endpoint, i.e.\@ both endpoints of $\{v,w\}$ are high-degree. But then, by definition, $\{v,w\}$ is no longer a dangerous edge but a high-degree edge.
\end{proof}
Since Breaker claims high-degree edges, these edges cannot count towards the threat threshold that defines an edge as (unclaimed) dangerous. This motivates the distinction between neighbours with a low degree and neighbours with a high degree. 
\begin{definition}
    For any vertex $v$, let $\degML(v)$ be the number of low-degree neighbours of $v$ in~$G_M$.
    For a high-degree vertex $u$, we define $\Nearly(u)$ as the set of all low-degree vertices which were among the first $q/2$ neighbours of $u$ in~$G_M$. Here, we use the fact that $q$ is even. Conversely, for a low-degree vertex $v$, we define $\Ninvearly(v)$ as the set of all high-degree vertices $u$ for which $v\in\Nearly(u)$.
\end{definition}
\begin{observation}
\label{obs:dangerousClaiming}
    At the time Breaker claims a dangerous edge $\{v,w\}$ with low-degree $v$, we have $\degML(v)+\degM(w) > q$. 
\end{observation}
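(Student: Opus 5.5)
At the moment Breaker claims $\{v,w\}$ in step~\ref{strat:TA dangerous}, I will bound $\threats(\{v,w\})$ from above and compare the bound with the fact that the edge is unclaimed dangerous, so $\threats(\{v,w\})>q$.

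By \Cref{obs:dangerous1lo1hi}, $v$ is the low-degree endpoint and $w$ the high-degree endpoint. If Maker claimed $\{v,w\}$, every new threat would be one of two kinds:
\begin{itemize}
\item an edge $\{x,w\}$ with $x\in N_{G_M}(v)$;
\item an edge $\{v,y\}$ with $y\in N_{G_M}(w)$.
\end{itemize}
Moreover, such an edge is a threat only if it is currently unclaimed.

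First I treat the neighbours $y$ of $w$. There are at most $\degM(w)$ such $y$, so this kind contributes at most $\degM(w)$ threats.

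Next I treat the neighbours $x$ of $v$. If $x$ has high degree, then $\{x,w\}$ joins two high-degree vertices. Step~\ref{strat:TA high} runs before step~\ref{strat:TA dangerous} in this same turn, so every high-degree edge has already been claimed by then. Degrees in $G_M$ do not change during Breaker's turn, so the high/low classification is the same as in step~\ref{strat:TA high}. Hence $\{x,w\}$ is not unclaimed and is not a threat. Only low-degree neighbours $x$ of $v$ can contribute, which gives at most $\degML(v)$ threats.

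Adding the two cases gives
\[
q<\threats(\{v,w\})\le \degML(v)+\degM(w),
\]
which is the claim. The only delicate point is this timing argument: high-degree edges must already be claimed when the dangerous edge is taken. The strict inequality comes directly from the definition of an unclaimed dangerous edge.
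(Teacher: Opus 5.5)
Your proof is correct and matches the paper's argument: a high-degree Maker neighbour $x$ of $v$ cannot contribute a threat, because $\{x,w\}$ is a high-degree edge and is already claimed by step~\ref{strat:TA high}. This gives $q<\threats(\{v,w\})\le \degML(v)+\degM(w)$. Your explicit timing remark (Maker degrees are frozen during Breaker's turn, and step~\ref{strat:TA high} precedes step~\ref{strat:TA dangerous}) only spells out what the paper's one-line proof leaves implicit.
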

\begin{proof}
    Let $u$ be a high-degree neighbour of $v$. Then, \Cref{obs:dangerous1lo1hi} implies that $\{u,w\}$ is a high-degree edge and was already claimed by Breaker. Hence, Maker claiming $\{v,w\}$ creates at most $\degML(v)+\degM(w)$ many threats.  
\end{proof}

\begin{observation}
    \label{obs:dangerousDecreasing}
    At any time of the game, if $\{v,w\}$ is a dangerous edge with low-degree $v$, we have $\degML(v)+\degM(w)+|\Ninvearly(v)|>q$. 
\end{observation}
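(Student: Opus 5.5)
We prove that the quantity $\Phi \coleq \degML(v)+\degM(w)+|\Ninvearly(v)|$ never decreases for as long as $\{v,w\}$ remains a dangerous edge with low-degree endpoint $v$. By \Cref{obs:dangerousClaiming}, at the moment Breaker claims $\{v,w\}$ we have $\degML(v)+\degM(w)>q$, so $\Phi>q$ already at that time. Maker degrees never decrease, so $\degM(w)$ is monotone. By \Cref{obs:dangerous1lo1hi}, as long as $\{v,w\}$ is dangerous, $v$ stays low-degree and $w$ stays high-degree. It therefore suffices to control how $\degML(v)$ and $|\Ninvearly(v)|$ change.

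Fix a later time, take an update step (a Maker move), and check that $\degML(v)+|\Ninvearly(v)|$ cannot drop. The term $\degML(v)$ only decreases when some low-degree Maker neighbour $u$ of $v$ becomes high-degree, i.e.\ $\degM(u)$ reaches $q/2$. This is exactly where $q$ being even enters. At that moment $u$ has precisely $q/2$ Maker neighbours, and these are its first $q/2$ neighbours. The vertex $v$ is among them and is low-degree. Hence $v\in\Nearly(u)$, so $u\in\Ninvearly(v)$, and $|\Ninvearly(v)|$ increases by one while $\degML(v)$ decreases by one. The sum is unchanged.

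We also need that $|\Ninvearly(v)|$ never decreases on its own. By definition, $\Nearly(u)$ consists of the low-degree vertices among the first $q/2$ neighbours of $u$. So a vertex $u$ leaves $\Ninvearly(v)$ only if $u$ ceases to be high-degree, which is impossible, or if $v$ ceases to be low-degree, which contradicts $\{v,w\}$ still being dangerous. The set of first $q/2$ neighbours of $u$ is frozen once $\degM(u)\ge q/2$. A single Maker edge can trigger several degree changes at once, for instance when both endpoints become high-degree simultaneously, so we process them one vertex at a time; each transition is handled by the argument above.

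The main obstacle is bookkeeping about the exact moment of becoming high-degree. We must make sure that "first $q/2$ neighbours" is well defined, and that when $u$ reaches degree $q/2$ through the very edge $\{u,v\}$, the vertex $v$ is counted. In that case $v$ has just gained a neighbour $u$ that is already high-degree, so $\degML(v)$ does not increase. However, $v\in\Nearly(u)$ still holds, provided $v$ is still low-degree after the move, which it is since the edge remains dangerous. This case therefore only increases $\Phi$.
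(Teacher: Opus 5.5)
Your proof is correct and takes the same route as the paper. You start from \Cref{obs:dangerousClaiming} at the moment $\{v,w\}$ is claimed. You then observe that $\degM(w)$ and $\abs{\Ninvearly(v)}$ never decrease while $v$ stays low-degree, and that each unit lost in $\degML(v)$ (a low-degree neighbour $u$ reaching degree $q/2$) is compensated by $u$ entering $\Ninvearly(v)$. Your extra bookkeeping makes explicit details the paper's short argument leaves implicit: simultaneous transitions, the case where $u$ becomes high-degree through the edge $\{u,v\}$ itself, and the need for $q$ to be even.
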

\begin{proof}
    \Cref{obs:dangerousClaiming} implies that when $\{v,w\}$ is claimed by Breaker, it holds that $\degML(v)+\degM(w)+|\Ninvearly(v)| \geq \degML(v)+\degM(w)>q$.
    Over the course of the game, $\degM(w)$ and $\abs{\Ninvearly(v)}$ can only increase, but $\degML(v)$ might decrease if a low-degree neighbour $u$ of $v$ becomes high-degree. In this scenario, $\degML(v)$ decreases by 1, but $u$ is added to $\Ninvearly(v)$. Thus, the statement also holds at any later stage of the game.
\end{proof}

\paragraph{Vertex Budget Allocation and Payment.}

To prove \cref{thm: threatavoiding}, we need to show that Breaker has enough budget to play \cref{strat:TA}. As for the \btg, we will maintain a vertex budget function $\bud(v) \coleq\bud_A(v)-\bud_P(v)$, consisting of a positive term $\bud_A$ of allocated budget and a term $\bud_P$ of budget used for paying for Breaker's claimed edges.

As in the \btg, we set $\bud_A(v)\coleq\frac{q}{2}\degM(v)$, i.e., we allocate for each claimed Maker edge a budget of $q/2$ to each of its endpoints. Thus, if we now fix the game after Breaker's turn, the total budget distributed among all vertices equals the number of edges Breaker was allowed to claim until now: 
\begin{equation}
    \sum_{v\in V(G)} \bud_A(v)=q\cdot e(G_M). \label{eq:c_A}
\end{equation}

To define the payment term $\bud_P(v)$, we count the number of Breaker edges based on their type in the following way:
\begin{itemize}
    \item Let $\nT(v)$ be the number of (threat) edges incident to~$v$ claimed by Breaker in step \ref{strat:TA threat} of \Cref{strat:TA}.
    \item Let $\nH(v)$ be the number of high-degree edges claimed by Breaker incident to $v$ which were not claimed by Breaker in step \ref{strat:TA threat} of \Cref{strat:TA}.
    \item Let $\nD(v)$ be the number of dangerous edges claimed by Breaker incident to $v$ which were not claimed by Breaker in step \ref{strat:TA threat} of \Cref{strat:TA}.
    \item Let $\nC(v)$ be the number of closing edges claimed by Breaker when closing~$v$, which are not high-degree or dangerous.    
\end{itemize}
Recall that an edge can only be dangerous if it is not high-degree. Therefore, each edge of Breaker is counted by exactly one of the four functions defined above.

Any vertex will have enough budget to pay for half of incident threat edges. Similarly, if a vertex is high-degree, it will have accumulated enough budget to pay half for incident high-degree edges. These two edge types will therefore be paid for equally by their two endpoints. If a vertex is being closed, it has accumulated enough budget to pay the full amount for all its incident edges; therefore, closing edges are paid for by the closing vertex. 

Intuitively, it is reasonable to pay for dangerous edges by their endpoints, proportional to their respective degrees. However, it turns out that dangerous edges cannot be paid for only by their endpoints. Instead, for a dangerous edge $\{v,w\}$ with $v$ being low-degree, also the vertices in $\Ninvearly(v)$ will pay a certain amount proportional to their degree. Note that each vertex $u\in \Ninvearly(v)$ is high-degree, so $u$ has already accumulated a lot of budget, which is why $u$ can afford to partly pay for the dangerous edges incident to  $v\in \Nearly(u)$.
This motivates the following payment function. 
\begin{equation*}
    \bud_P(v) \coleq 
    \begin{cases}
      \frac{1}{2}\nT(v) + \frac{1}{2}\nH(v) + \frac{\degM(v)}{q}\nD(v) + \sum_{w\in \Nearly(v)}\frac{\alpha_v}{q}\nD(w) + \nC(v) & \degM(v) \ge \frac{q}{2}\\[4pt]
     \frac{1}{2}\nT(v)  + \frac{\degML(v)}{q}\nD(v) + \sum_{u\in \Ninvearly(v)}\frac{1-\alpha_u}{q}\nD(v) & \degM(v) < \frac{q}{2}
    \end{cases},
\end{equation*}
where $\alpha_{u}=\frac{2\degM(u)}{q}-1$ is a scalar with $\alpha_u\in[0,1]$ for each high-degree vertex $u$. One can think of the value $\alpha_u$ in the following way. Suppose a vertex $u$ was charged only $1/2$ for every incident Breaker edge. Once $u$ becomes high-degree, it holds that $\bud_A(u)=\frac{q}{2}\deg_M(u)=\frac{q}{2}\cdot \frac q2\ge n/2$, so $u$ has already accumulated enough budget to pay for any incident edge. Consequently, the budget $\frac{q}{2}$ received for each subsequent incident Maker edge is surplus, and sums to $\frac{q}{2}(\degM(u)-\frac{q}{2})$. This surplus is used by $u$ to pay for the dangerous edges incident to vertices in $\Nearly(u)$. Since $\abs{\Nearly(u)} \leq \frac{q}{2}$ and each vertex in $\Nearly(u)$ has at most $n$ incident dangerous edges, distributing the surplus evenly over these edges yields a per-edge payment of $\frac{q}{2}(\degM(u)-\frac{q}{2})/(\frac{q}{2}n) = (\degM(u)-\frac{q}{2})/n \ge (2\degM(u)-q)/q^2 = \frac{\alpha_u}{q}$, matching the definition of $\bud_P(v)$. It remains to show that $u$ can pay more than $1/2$ for each dangerous or closing edge incident to $u$ itself. This relies on two further properties: (i) there can be at most $\frac{n}{2}$ dangerous edges incident to each vertex in $\Nearly(u)$ (\Cref{lemma:dangerous n/2}), and (ii) since Breaker must close high-degree edges, the numbers of dangerous edges incident to vertices in $\Nearly(u)$, dangerous edges incident to $u$, and closing edges incident to $u$ cannot all be large simultaneously (\Cref{lem:fiveprops} \ref{eq:DSC<n}).

\begin{observation}\label{obs:edges are paid}
Throughout the game,
    \[\sum_{v\in V(G)} \bud_P(v) \geq e(G_B). \]
\end{observation}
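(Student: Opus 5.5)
The plan is to double count: attribute every Breaker edge to the payment terms of the vertices and check that each edge receives total payment at least $1$. As noted after the definition of $\nT,\nH,\nD,\nC$, every edge of $G_B$ is counted by exactly one of these four functions. All classifications (high/low degree, dangerous, $\Nearly$, $\Ninvearly$) are taken at the current time. I would then treat the four types separately and sum.

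\emph{Threat, high-degree and closing edges.} A threat edge $\{u,v\}$ contributes $1$ to $\nT(u)$ and to $\nT(v)$. Both cases of $\bud_P$ contain the term $\frac12\nT$, so the edge receives payment $\frac12+\frac12=1$. A high-degree edge $\{u,v\}$ has both endpoints currently of high degree. Indeed, Maker degrees never decrease, so an endpoint that was high when the edge became high-degree is still high. Hence both endpoints are in the first case of $\bud_P$, and the edge again receives $\frac12+\frac12$. A closing edge counted in $\nC(v)$ is paid fully by $v$. The closing vertex satisfies $\degM(v)=q\ge q/2$, so the first case applies and contains the term $\nC(v)$.

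\emph{Dangerous edges.} This is the only step needing care. Let $\{v,w\}$ be a dangerous edge. By \Cref{obs:dangerous1lo1hi}, at the current time one endpoint, say $v$, is of low degree and the other, $w$, is of high degree. If $v$ later becomes high, the edge is by definition a high-degree edge and falls under the previous case. So every edge counted in $\nD(x)$ for a high-degree $x$ has its other endpoint low, and vice versa. I would collect the contributions to this edge from three sources:
\begin{itemize}
\item from $w$ (first case), the amount $\frac{\degM(w)}{q}$;
\item from $v$ (second case), the amount $\frac{\degML(v)}{q}+\sum_{u\in\Ninvearly(v)}\frac{1-\alpha_u}{q}$;
\item from each $u\in\Ninvearly(v)$ (first case), the amount $\frac{\alpha_u}{q}$, through the term $\sum_{x\in\Nearly(u)}\frac{\alpha_u}{q}\nD(x)$ with $x=v$, because $v\in\Nearly(u)$ exactly when $u\in\Ninvearly(v)$.
\end{itemize}
The $\alpha_u$ terms cancel, so the total payment for $\{v,w\}$ is
\[
\frac{\degM(w)+\degML(v)+\abs{\Ninvearly(v)}}{q}>1,
\]
where the inequality is \Cref{obs:dangerousDecreasing}.

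Summing over all edges of $G_B$ gives $\sum_v\bud_P(v)\ge e(G_B)$. The main obstacle is bookkeeping rather than estimation. One must check that the sum over $\Nearly(u)$ pays only for dangerous edges whose low endpoint lies in $\Nearly(u)$, and that it never double-charges an edge already handled elsewhere. One must also confirm that every vertex is evaluated in the correct case of $\bud_P$ at the current time, including vertices that changed status during the game. Both points reduce to the monotonicity of Maker degrees together with \Cref{obs:dangerous1lo1hi}.
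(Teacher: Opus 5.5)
Your proposal is correct and follows the paper's proof essentially verbatim: threat and high-degree edges are paid $\tfrac12+\tfrac12$ by their endpoints, and closing edges are paid $1$ by the closing vertex. Dangerous edges receive $\frac{\degM(w)+\degML(v)+\abs{\Ninvearly(v)}}{q}>1$ once the $\alpha_u$ terms cancel, and your extra remarks about evaluating degree classes at the current time (via monotonicity of Maker degrees) make explicit what the paper leaves implicit.
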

\begin{proof}
    We prove that each Breaker edge contributes at least 1 to the sum. For each threat or high-degree edge $\{u,v\}$, both $\bud_P(u)$ and $\bud_P(v)$ are increased by $\frac{1}{2}$. When closing a vertex $v$, each closing edge contributes 1 to $\bud_P(v)$. Any dangerous edge $\{v,w\}$ has one low-degree endpoint, say $v$, and one high-degree endpoint, say $w$, by \cref{obs:dangerous1lo1hi}. Then, $\bud_P(w)$ contributes $\degM(w)/q$ to the sum, while $\bud_P(v)$ contributes $\degML(v)/q+\sum_{u\in \Ninvearly(v)}\frac{1-\alpha_u}{q}$. Additionally, each vertex $u\in \Ninvearly(v)$ contributes $\alpha_u/q$. In total
    \[\frac{\degM(w)}q + \frac{\degML(v)}q+\sum_{u\in \Ninvearly(v)}\frac{1-\alpha_u+\alpha_u}{q}=\frac{\degML(v)+\degM(w)+\abs{\Ninvearly(v)}}{q}\geq 1,\] by \cref{obs:dangerousDecreasing}.
\end{proof}

\begin{lemma}
\label{lemma:c_geq0}
    Throughout the game $\bud(v)\geq 0$ for all $v\in V(G)$.
\end{lemma}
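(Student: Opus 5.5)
We split according to whether $v$ is low-degree or high-degree, and compare $\bud_A(v)=\frac q2\degM(v)$ with each term of $\bud_P(v)$. The text before the statement refers forward to two auxiliary results: \Cref{lemma:dangerous n/2}, that every vertex is incident to at most $\frac n2$ dangerous edges, and \Cref{lem:fiveprops}, in particular \ref{eq:DSC<n}, that dangerous edges at $\Nearly(u)$, dangerous edges at $u$ and closing edges at $u$ cannot all be numerous at once. I would prove (or assume) these first and then do the budget bookkeeping. Throughout I use $q^2\ge 2n$, and that the game is before the point where step~\ref{strat:TA claim all} applies.

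\textbf{Low-degree $v$.} Here $\degM(v)<q/2$. The threat term is handled as in \Cref{cl: c(v) nonnegative}. Every threat edge at $v$ arises from a Maker neighbour $w$ of $v$ together with a Maker neighbour of $w$. Since $\degM(w)\le q$ (vertices are closed at degree $q$), this gives $\nT(v)\le q\degM(v)$, hence $\frac12\nT(v)\le \frac q2\degM(v)$.

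That bound leaves no room, so I would sharpen the count. Threat edges at $v$ are counted only through open neighbours. Moreover, a threat edge $\{v,x\}$ via a high-degree $w$ was often claimed earlier as a dangerous or high-degree edge. It may be cleaner to prove directly
\[
\nT(v)+\tfrac{2}{q}\Bigl(\degML(v)+\textstyle\sum_{u\in\Ninvearly(v)}(1-\alpha_u)\Bigr)\nD(v)\le q\degM(v).
\]
For the dangerous part, $\degML(v)+\abs{\Ninvearly(v)}\le \degM(v)$ and $1-\alpha_u\le 1$, so the dangerous payment is at most $\frac{\degM(v)}{q}\nD(v)\le \frac{\degM(v)}{q}\cdot\frac n2$ by \Cref{lemma:dangerous n/2}. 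This is at most $\frac{q}{4}\degM(v)$ because $n\le q^2/2$. So it suffices to show $\nT(v)\le \frac q2\degM(v)$ plus whatever slack remains. I expect to charge threats only through low-degree neighbours, about $\frac q2$ each, with the threats through high-degree neighbours already counted among high-degree or dangerous claims. This refinement is the delicate point of the low-degree case, and I would first try to make the ``at most $q/2$ per low neighbour'' count precise.

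\textbf{High-degree open $v$.} Split $\bud_A(v)=\frac q2\cdot\frac q2+\frac q2(\degM(v)-\frac q2)$. The first part, which is at least $\frac n2$, pays $\frac12$ for each incident edge of any kind. This covers $\frac12\nT+\frac12\nH$ and half of each dangerous edge, since $\nT+\nH+\nD\le n$. The surplus $\frac q2(\degM(v)-\frac q2)=\frac{q^2}{4}\alpha_v$ has to cover two things. The first is the excess dangerous payment $(\frac{\degM(v)}q-\frac12)\nD(v)=\frac{\alpha_v}{2}\nD(v)$. The second is $\frac{\alpha_v}{q}\sum_{w\in\Nearly(v)}\nD(w)$, which is at most $\frac{\alpha_v}{q}\cdot\frac q2\cdot\frac n2=\frac{\alpha_v n}{4}$. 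Together these are at most $\frac{\alpha_v}{2}\bigl(\nD(v)+\frac n2\bigr)$. Using \ref{eq:DSC<n} to bound $\nD(v)+\frac1q\sum_{w\in\Nearly(v)}\nD(w)$ by about $n$ (or $n$ minus other edge counts), this is at most $\frac{\alpha_v}{4}q^2$, again by $q^2\ge 2n$.

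\textbf{Closed $v$.} Here $\degM(v)=q$, so $\alpha_v=1$ and $\bud_A(v)=\frac{q^2}2\ge n$. We must cover $\nC(v)$ plus all the other terms. Every edge at $v$ is counted once with weight at most $1$: threat and high-degree edges with weight $\frac12$, dangerous edges with weight $1$, closing edges with weight $1$. The remaining $\frac1q\sum_{w\in\Nearly(v)}\nD(w)$ is where \ref{eq:DSC<n} is needed, in the form $\nD(v)+\nC(v)+\frac1q\sum_{w\in\Nearly(v)}\nD(w)\lesssim n$. I expect this to be the main obstacle. Proving that inequality requires the argument that Breaker claims high-degree edges before dangerous ones, so that the dangerous edges at low neighbours of $v$ land on vertices that are not already dangerous-adjacent to $v$ or to be closed. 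I would try to establish it by showing that these edge sets end at essentially disjoint subsets of the $n$ vertices.
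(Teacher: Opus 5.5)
\textbf{Verdict.} Your high-degree and closed cases follow the paper's proof. The low-degree case, however, has a genuine gap.

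\textbf{High-degree and closed cases.} As in the paper, the part $\frac{q^2}{4}\ge\frac n2$ of $\bud_A(v)$ pays half of every incident edge by property~(iii) of \Cref{lem:fiveprops}. The surplus $\frac{q^2}{4}\alpha_v$ pays the rest. One correction: the inequality you actually need is $\nD(v)+\nC(v)+\frac{2}{q}\sum_{w\in\Nearly(v)}\nD(w)\le n$, with factor $\frac 2q$ rather than $\frac 1q$. It follows from property~(v) together with $\abs{\Nearly(v)}\le\frac q2$, which give $\frac 2q\sum_{w\in\Nearly(v)}\nD(w)\le\nS(v)$, and then from property~(iv).

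\textbf{The gap in the low-degree case.} After bounding the dangerous payment globally by $\frac q4\degM(v)$, you reduce to $\nT(v)\le\frac q2\degM(v)$, and this is false. Your justification also fails. Threats at $v$ through a high-degree Maker neighbour are not already paid for as high-degree or dangerous claims: an edge at a low-degree vertex is never a high-degree edge, and nothing forces it to have been dangerous. For example, suppose Maker claims $\{v,w\}$ and then $q-2$ edges $\{w,x_i\}$ to fresh vertices. Breaker then claims the $q-2$ threats $\{v,x_i\}$ in step~(ii), so $\nT(v)=q-2>\frac q2=\frac q2\degM(v)$ once $q>4$. By bounding $1-\alpha_u\le 1$ you discarded exactly the mechanism that handles this.

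\textbf{What the paper does instead.} It splits $\frac q2\degM(v)$ into $\frac q2$ per Maker neighbour $w$ and writes $\nT(v)\le\sum_{w}\nT_w(v)$. It then uses $\nT_w(v)\le\degM(w)-1$ from property~(i); this is the per-neighbour count you were looking for. There are three kinds of neighbour.
\begin{itemize}
\item For low-degree $w$, the cost is $\frac{\nT_w(v)}{2}<\frac q4$ plus the dangerous share $\frac{\nD(v)}{q}\le\frac{n}{2q}\le\frac q4$, using \Cref{lemma:dangerous n/2}.
\item For high-degree $w\notin\Ninvearly(v)$, $v$ pays nothing for dangerous edges on behalf of $w$. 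So $\frac q2$ covers $\frac{\nT_w(v)}{2}\le\frac{q-1}{2}$.
\item For $w\in\Ninvearly(v)$, the charge is $\frac{1-\alpha_w}{q}\nD(v)\le\frac{n}{q^2}\bigl(q-\degM(w)\bigr)\le\frac{q-\degM(w)}{2}$. This shrinks exactly as the threat cost $\frac{\nT_w(v)}{2}\le\frac{\degM(w)}{2}$ grows, so the two together stay within $\frac q2$.
\end{itemize}
This per-neighbour coupling of threat payments with dangerous-edge payments, through the weight $1-\alpha_w$, is the missing idea. Without it, the low-degree case does not close.
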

Proving \Cref{lemma:c_geq0} allows us to prove the theorem.
\begin{proof}[Proof of \Cref{thm: threatavoiding}]
    We show that Breaker wins the game by playing \cref{strat:TA}. If Breaker is able to follow \cref{strat:TA} throughout the game, then he will always claim all threats and dangerous edges in his turn. Therefore, Maker will never claim a triangle, and by claiming all dangerous edges, Breaker prevents himself from having to claim more than $q$ threats in the next round. Once he has enough budget to perform step~\ref{strat:TA claim all} of his strategy, he will do it and win the game. It remains to show that, before he can perform step~\ref{strat:TA claim all} of his strategy, he always has enough budget to play according to the other steps of \cref{strat:TA}.
    
    Throughout the game, \eqref{eq:c_A}, \Cref{obs:edges are paid}, and \Cref{lemma:c_geq0} show that 
    \[0\leq \sum_{v\in V(G)} \bud(v) = \sum_{v\in V(G)} \bud_A(v) - \sum_{v\in V(G)} \bud_P(v)\leq q\cdot e(G_M) - e(G_B).\] 
    This implies $e(G_B) \leq q\cdot  e(G_M)$, showing that Breaker never claims more edges than his budget allows and thereby finishing the proof of \cref{thm: threatavoiding}.
\end{proof}
\medskip
To prove \cref{lemma:c_geq0}, we introduce the following notation: 
by $\nT_u(v)$ we denote the number of former threat edges $\{v,w\}$ incident to $v$ that were claimed by Breaker in step \ref{strat:TA threat} of \Cref{strat:TA} because $\big\{\{v,u\},\{w,u\}\big\}\subs E(G_M)$. In other words, $\nT_u(v)$ counts the number of former threat edges incident to $v$ that arose because of Maker edges incident to~$u$. Note that $\nT_u(v)=0$ unless $\{u,v\}\in E(G_M)$ and 
\[\nT(v)=\sum_{u\in N_{G_M}(v)} \nT_u(v).\]
Furthermore, for a high-degree vertex $w$, define 
\begin{equation*}
    \nS(w) \coloneqq \Big\vert \bigcup_{v\in \Nearly(w)}\big\{u\;:\;\{v,u\} \text{ dangerous}\big\}\Big\vert.
\end{equation*}
\Cref{fig:explanationSwTuv} illustrates these two definitions. Note that since $\degM(v) < \frac{q}{2}$ for all $v\in \Nearly(w)$, the vertices $u$ which are counted by $\nS(w)$ must fulfil $\degM(u) \geq \frac{q}{2}$ by \Cref{obs:dangerous1lo1hi}. 

\begin{figure}
    \centering

    \begin{subfigure}[b]{0.42\textwidth}
        \centering
        \begin{tikzpicture}[
            scale=0.7,
            vertex/.style={
                circle,
                fill=black,
                inner sep=2.2pt
            },
            blackedge/.style={
                draw=black,
                line width=1.1pt
            },
            rededge/.style={
                draw=red,
                line width=1.4pt
            }
        ]

        \node[vertex] (v) at (0,0) {};
        \node[vertex] (w) at (4,0) {};

        \node at (-0.05,-0.55) {$v$};
        \node at (4,-0.55) {$u$};

        \node[vertex] (x1) at (4.1,3.5) {};
        \node[vertex] (x2) at (5.0,3.15) {};
        \node[vertex] (x3) at (5.8,2.45) {};
        \node[vertex] (x4) at (6.5,1.65) {};

        \draw[blackedge] (v) -- (w);
        \draw[blackedge] (w) -- (x1);
        \draw[blackedge] (w) -- (x2);
        \draw[blackedge] (w) -- (x3);
        \draw[blackedge] (w) -- (x4);

        \draw[rededge]
            (v) to[out=35,in=225] (x1);

        \draw[rededge]
            (v) to[out=30,in=210] (x2);

        \draw[rededge]
            (v) to[out=25,in=200] (x3);

        \draw[rededge]
            (v) to[out=18,in=195] (x4);

        \end{tikzpicture}
        \caption{$t_u(v)$ former threats at $v$ via $u$.}
        \label{fig:first-part}
    \end{subfigure}
    \hfill
    \begin{subfigure}[b]{0.55\textwidth}
        \centering
        \begin{tikzpicture}[
            scale=0.7,
            vertex/.style={
                circle,
                fill=black,
                inner sep=2.2pt
            },
            neighbour/.style={
                circle,
                draw=black,
                fill=black,
                line width=1pt,
                inner sep=2.5pt
            },
            target/.style={
                rectangle,
                draw=black,
                fill=white,
                line width=1pt,
                minimum width=2mm,
                minimum height=2mm,
                inner sep=2.5pt
            },
            firstedge/.style={
                draw=black,
                line width=1.1pt
            },
            secondedge/.style={
                draw=green!50!black,
                line width=1.1pt
            }
        ]

        \node[vertex] (w) at (0,0) {};
        \node at (-0.15,-0.55) {$w$};

        \node[neighbour] (n1) at (4,3.6) {};
        \node[neighbour] (n2) at (4,2.6) {};
        \node[neighbour] (n3) at (4,1.6) {};
        \node[neighbour] (n4) at (4,0.6) {};

        \node[vertex] (u1) at (4,-1) {};
        \node[vertex] (u2) at (4,-2) {};

        \node[target] (s1) at (8,4.0) {};
        \node[target] (s2) at (8,3) {};
        \node[target] (s3) at (8,2) {};
        \node[target] (s4) at (8,1) {};
        \node[target] (s5) at (8,0) {};
        \node[target] (s6) at (8,-1) {};

        \draw[firstedge] (w) -- (n1);
        \draw[firstedge] (w) -- (n2);
        \draw[firstedge] (w) -- (n3);
        \draw[firstedge] (w) -- (n4);
        \draw[firstedge] (w) -- (u1);
        \draw[firstedge] (w) -- (u2);

        \draw[secondedge] (n1) -- (s1);
        \draw[secondedge] (n1) -- (s2);

        \draw[secondedge] (n2) -- (s2);

        \draw[secondedge] (n3) -- (s3);
        \draw[secondedge] (n3) -- (s4);
        \draw[secondedge] (n3) -- (s5);

        \draw[secondedge] (n4) -- (s6);

        \draw[
            dashed,
            line width=1pt
        ]
        (4,2.1) ellipse [x radius=0.65, y radius=2.25];

        \node at (2.8,4.0) {$N^{+}(w)$};

        \draw[
            dashed,
            line width=1pt
        ]
        (8,1.5) ellipse [x radius=0.75, y radius=3.25];

        \node at (9.25,4.0) {$\nS(w)$};

        \end{tikzpicture}
        \caption{Vertices counted in $s(w)$}
        \label{fig:second-part}
    \end{subfigure}
    \caption{Figures illustrating $\nT_u(v)$ and $\nS(w)$. Red edges are former threat edges claimed by Breaker; green edges are claimed dangerous edges; black edges are Maker's claimed edges. Left: The number of red edges equals $\nT_u(v)$. Right: Vertices shown as squares are the vertices counted in $\nS(w)$.}
    \label{fig:explanationSwTuv}
\end{figure}

\begin{lemma}\label{lem:fiveprops}
    The following bounds hold throughout the game:
    \begin{enumerate}
        \item $\nT_u(v)\leq \degM(u) - 1$  for all $u,v\in V(G)$ for which $\{u,v\}\in E(G_M)$; \label{eq:T_u(v)}
        \item $\degM(v)\leq q$  for any $v\in V(G)$; \label{eq:degM{v}}
        \item $\nT(v) + \nH(v) + \nD(v) + \nC(v)\leq n$  for any $v\in V(G)$; \label{eq:THDC<n}
        \item $\nD(w)+\nS(w)+\nC(w)\leq n$ for all high-degree vertices $w\in V(G)$; \label{eq:DSC<n}
        \item $\nD(v)\leq \nS(w)$ for any high-degree vertex $w\in V(G)$ and all $v\in \Nearly(w)$.\label{eq:Dv<=Sw}
    \end{enumerate}
\end{lemma}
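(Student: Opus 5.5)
I would prove the five items separately, mostly directly from the definitions and the order of the steps in \Cref{strat:TA}. Item \ref{eq:degM{v}} comes first because the others use it. As soon as $\degM(v)=q$, step~\ref{strat:TA closing} closes $v$, so all unclaimed edges at $v$ go to Breaker. Hence Maker can never claim a further edge at $v$, and $\degM(v)\le q$.

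For item \ref{eq:T_u(v)}, every edge counted by $\nT_u(v)$ has the form $\{v,w\}$ with $\{w,u\}\in E(G_M)$ and $w\ne v$. Distinct counted edges give distinct $w$, and each such $w$ is a Maker neighbour of $u$ other than $v$. So $\nT_u(v)\le \degM(u)-1$.

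For item \ref{eq:THDC<n}, the four quantities count pairwise disjoint sets of Breaker edges incident to $v$. This disjointness was noted right after their definition; closing edges counted in $\nC(v)$ are incident to $v$ as well. Since $v$ has only $n-1$ incident edges, the sum is at most $n-1\le n$.

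Item \ref{eq:Dv<=Sw} is also immediate. Each dangerous edge at $v\in\Nearly(w)$ has the form $\{v,u\}$, and $u$ is one of the vertices in the union defining $\nS(w)$. Distinct dangerous edges at $v$ give distinct $u$, so $\nD(v)\le\nS(w)$.

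The main work is item \ref{eq:DSC<n}. I would injectively map the three counted objects to vertices $x\ne w$, that is, to edges $\{w,x\}$. Each dangerous edge $\{w,x\}$ at $w$ maps to $x$, and each closing edge $\{w,x\}$ counted in $\nC(w)$ maps to $x$. These two families are disjoint edge sets at $w$.

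For a vertex $u$ counted by $\nS(w)$, the key claim is that the edge $\{w,u\}$ is a high-degree edge. By \Cref{obs:dangerous1lo1hi}, $u$ is high-degree, and $w$ is high-degree by assumption. Step~\ref{strat:TA high} therefore ensures that $\{w,u\}$ is claimed by Breaker as a high-degree edge or was claimed earlier. Possibly $\{w,u\}$ is a threat edge or a Maker edge instead; the subtle point is timing, since $u$ may have become high-degree late. In either case it is neither dangerous, because dangerous edges are not high-degree, nor counted in $\nC(w)$, because $\nC$ excludes high-degree edges.

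I also need $u\ne w$. This holds because $w$ itself cannot be counted: $w\in\Ninvearly(v)$ means $\{v,w\}\in E(G_M)$, so $\{v,w\}$ is not a dangerous Breaker edge. Hence the three families map to disjoint sets of edges at $w$, giving $\nD(w)+\nS(w)+\nC(w)\le n-1\le n$.

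The step I expect to be the obstacle is exactly this disjointness for $\nS(w)$. One subtlety is that $u$ could be a vertex whose edge to $w$ was claimed by Maker or was a closing edge before either endpoint was high-degree. To handle it, I would work with the final status of each edge $\{w,x\}$ and check that every edge at $w$ falls into at most one of the three classes. A Maker edge $\{w,u\}$ would be harmless, since it simply uses the slot $u$. The tedious part is confirming that a closing edge that later became high-degree is not double counted, by appealing to the definition of $\nC$ at the moment of claiming.
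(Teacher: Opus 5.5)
Your proposal is correct and follows the paper's proof. Items (i), (ii), (iii) and (v) are argued exactly as there. For (iv), the paper likewise shows that every vertex counted by $\nS(w)$ is high-degree, while the other endpoints of the edges counted by $\nD(w)$ and $\nC(w)$ are low-degree, so the three sets are pairwise disjoint. Working with these vertex sets rather than edges at $w$ gives the bound $n$ directly and makes your check $u\neq w$ unnecessary.

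One caution about your last paragraph: the subtlety is resolved by reading ``not high-degree'' in the definition of $\nC$ at the current time, as your main argument and the paper do, not at the moment of claiming. Under the latter reading, a closing edge $\{w,x\}$ whose endpoint $x$ later became high-degree could be counted both in $\nC(w)$ and, via $x$, in $\nS(w)$.
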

\begin{proof} We prove the properties one after another.
\begin{enumerate}
    \item For a fixed Maker edge $\{u,v\}$, only the Maker edges incident to $u$, except $\{u,v\}$, can induce a threat at $v$ counted in $\nT_u(v)$; see \Cref{fig:explanationSwTuv}.
    \item Since Breaker closes all vertices $v$ with degree $q$, thereby claiming all unclaimed edges incident to $v$, the property holds.
    \item This follows from the fact that $\nT(v)$, $\nH(v)$, $\nD(v)$, and $\nC(v)$ count distinct edges incident to $v$.
    \item Let $V_s$ be the vertex set counted by $\nS(w)$. Similarly, let $V_g$ be the set of vertices $u$ such that $\{u,w\}$ is counted by $\nD(w)$ and $V_c$ the set of vertices $u$ such that $\{u,w\}$ is counted by $\nC(w)$. We will show that $V_s$, $V_g$, and $V_c$ are pairwise disjoint.
    
    By \Cref{obs:dangerous1lo1hi}, any $v\in V_s$ is high-degree, while any $u\in V_g$ is low-degree. As $\nC(w)$ only counts closing edges which are not high-degree, any $u\in V_c$ must be low-degree as well. Thus, $V_s\cap V_g=\emptyset=V_s\cap V_c$. Finally, $\nC(w)$ only counts closing edges which are not dangerous. Thus, $V_c\cap V_g=\emptyset$. 
    \item This follows immediately from the definition.\qedhere
\end{enumerate}
\end{proof}
Next, we will use the fact that we iteratively claim dangerous edges in \cref{strat:TA}. This allows us to bound the total number of dangerous edges per vertex.
\begin{lemma}
\label{lemma:dangerous n/2}
    Throughout the game, $\nD(v)\leq n/2$ for all low-degree $v$.
\end{lemma}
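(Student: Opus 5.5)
The plan is to charge the dangerous edges at $v$ to \emph{other}, distinct vertices $x$ for which $\{v,x\}$ is not a dangerous edge. This would give $2\nD(v)\le n$.

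Fix a low-degree vertex $v$. List its dangerous edges $\{v,w_1\},\dots,\{v,w_k\}$ in the order in which Breaker claimed them, so $k=\nD(v)$. By \Cref{obs:dangerous1lo1hi}, each $w_i$ is high-degree.

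Consider the moment $\{v,w_i\}$ is claimed in step~\ref{strat:TA dangerous}. At that moment it is still unclaimed dangerous, so by \Cref{obs:dangerousthreshold} we have $\threats(\{v,w_i\})=q+1$. The threats it would create are of two kinds:
\begin{itemize}
\item edges $\{w_i,y\}$ with $y$ a Maker neighbour of $v$. As in \Cref{obs:dangerousClaiming}, only low-degree $y$ count, because high-degree edges were already claimed in step~\ref{strat:TA high}.
\item edges $\{v,x\}$ with $x\in N_{G_M}(w_i)$.
\end{itemize}
Since $q$ is even and $v$ is low-degree, $\degML(v)\le q/2-1$. Hence the set
\[U_i\coleq\{x\in N_{G_M}(w_i): \{v,x\}\text{ unclaimed at that moment}\}\]
has $|U_i|\ge q+1-(q/2-1)=q/2+2$. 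Every $x\in U_i$ is distinct from $w_1,\dots,w_i$, because those edges at $v$ are already claimed.

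Next I would understand how the $U_i$ evolve. An $x\in U_i$ can later become some $w_j$ with $j>i$, since the edge $\{v,x\}$ may later be claimed as a dangerous edge. However, the step-by-step rule gives control here. Breaker re-checks dangerousness after each single claim, and each new Maker edge raises every $\threats(\cdot)$ by at most one. So between two dangerous claims at $v$, the pool of unclaimed edges at $v$ that witness dangerousness must be replenished by Maker edges at the relevant $w$'s or at $v$.

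Concretely, the aim is an injection $\phi$ from $\{w_1,\dots,w_k\}$ into the vertex set, such that $\phi(w_i)\in U_i$, the $\phi(w_i)$ are pairwise distinct, and no $\phi(w_i)$ equals any $w_j$. This yields $2k\le n$.

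I would build $\phi$ greedily in \emph{reverse} claim order, or via Hall's condition, using two facts:
\begin{itemize}
\item $|U_i|>q/2$;
\item each $x$ lies in $U_i$ only when $x\in N_{G_M}(w_i)$, so $x$ is usable for at most $\degM(x)\le q$ indices by \Cref{lem:fiveprops}~\ref{eq:degM{v}}.
\end{itemize}
To exclude collisions with the $w_j$, I would show the following. If $x=w_j\in U_i$ with $j>i$, then at time $i$ the edge $\{v,w_j\}$ was a threat-witness for $\{v,w_i\}$. When $\{v,w_j\}$ later becomes dangerous, it needs at least $q/2+2$ fresh unclaimed Maker-neighbour witnesses of its own. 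Thus each such collision is compensated by new witnesses.

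The main obstacle is exactly this distinctness and counting step. The naive double count, $k\cdot q/2\le q\cdot n$, only gives $k\le 2n$, which is a factor of $4$ too weak. I would first try a potential argument on the quantity ``number of unclaimed edges at $v$ plus twice the number of dangerous edges at $v$''. The goal is to show this quantity never exceeds $n$. That would require checking that a dangerous claim at $v$ consumes at least one additional unclaimed non-witness edge at $v$, or showing that a dangerous claim cannot occur unless enough unclaimed edges remain. If this fails, I would fall back on the Hall-type matching described above.
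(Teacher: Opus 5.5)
Your preliminary facts are right. The bound $|U_i|\ge q/2+2$ holds, and your remark that $w_j\in U_i$ forces $j>i$ is exactly the ordering fact the paper needs. But the proposal stops where the real work begins, and neither route you sketch closes the gap.

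The potential argument needs each dangerous claim at $v$ to be offset by some other claim at $v$. Nothing in \Cref{strat:TA} guarantees this: in step~\ref{strat:TA dangerous} Breaker claims only the single edge $\{v,w_i\}$, and several dangerous edges at $v$ may be claimed one after another in the same turn.

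The Hall/injection route needs $\bigl|\bigcup_{i\in I}U_i\setminus\{w_1,\dots,w_k\}\bigr|\ge |I|$. As you observed yourself, your two facts only give about $|I|/2$.

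More fundamentally, your plan never uses $q^2\ge 2n$ or step~\ref{strat:TA claim all}, and these are what make the lemma true. Local information alone, namely that every Maker degree is at most $q$, bounds the number of Maker edges only by $nq/2$. Even with the disjoint-witness count described next, that yields nothing better than $\nD(v)\le n$.

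The missing idea is to count witness \emph{edges} rather than witness vertices, and to bound their number globally.

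\begin{itemize}
\item Put $E_i\coleq\{\{w_i,x\}:x\in U_i\}\subseteq E(G_M)$.
\item For $i<j$, the sets $E_i$ and $E_j$ can share only the edge $\{w_i,w_j\}$. That edge lies in $E_j$ only if $w_i\in U_j$, which is impossible because $\{v,w_i\}$ is already claimed when $\{v,w_j\}$ is. Hence the $E_i$ are pairwise disjoint, and $\nD(v)\cdot\frac q2\le e(G_M)$.
\item Step~\ref{strat:TA claim all} ends the game after at most $\ceil{\binom n2/(q+1)}\le n^2/(2q)$ rounds, so $e(G_M)\le n^2/(2q)$.
\item Combining these gives $\nD(v)\le n^2/q^2\le n/2$, using $q^2\ge 2n$.
\end{itemize}

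This is precisely the paper's argument.
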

\begin{proof} 
Fix a low-degree vertex~$v$. For any dangerous edge $e = \{v,x\}$, let $E^M_x$ be the subset of those Maker edges incident to $x$ that would have formed a threat together with $e$ if Maker had claimed $e$ instead of Breaker. See \Cref{fig:disjointEMx} for an illustration. Denote by $\degM'(v)$ the Maker degree of $v$ at the time $e$ was claimed. Since $v$ is low-degree, we also have $\degM'(v) \leq \frac{q}{2}$. Therefore, \Cref{obs:dangerousthreshold} implies that $\abs{E^M_x} \geq q+1 - \degM'(v) \geq \frac{q}{2}$. 

\begin{claim}
    Let $\{v,x\}$ and $\{v,x'\}$ be two different dangerous edges incident to $v$. Then $E^M_x \cap E^M_{x'} = \emptyset$.
\end{claim}
\begin{proof}
    Since edges in $E^M_x$ resp.\@ $E^M_{x'}$ must be incident to $x$ resp.\@ $x'$, it is enough to prove that $\{x,x'\}\not\in E^M_x \cap E^M_{x'}$. 
    Breaker claims dangerous edges iteratively; therefore, we may assume that $\{v,x\}$ was claimed before $\{v,x'\}$. In that case, $\{v,x\}$ cannot be a potential threat for $\{v,x'\}$, excluding $\{x,x'\}$ from $E^M_{x'}$.
\end{proof}

\begin{figure}
    \centering
    \begin{tikzpicture}[
        scale=0.7,
        vertex/.style={circle, fill=black, inner sep=2.2pt},
        makeredge/.style={draw=black, line width=1.1pt},
        dangerousedge/.style={draw=green!50!black, line width=1.4pt},
        setx/.style={draw=black, line width=1.1pt},
        setxp/.style={draw=black, line width=1.1pt}
    ]

    \node[vertex] (v)  at (0,0)    {};
    \node[vertex] (x)  at (4,2.2)  {};
    \node[vertex] (xp) at (4,-2.2) {};

    \node at (-0.1,-0.6) {$v$};
    \node at (3.6,2.75)  {$x$};
    \node at (3.6,-2.8)  {$x'$};

    \draw[dangerousedge] (v) -- (x);
    \draw[dangerousedge] (v) -- (xp);

    \node[vertex] (a1) at (-2.2,1.2)  {};
    \node[vertex] (a2) at (-2.5,0)    {};
    \node[vertex] (a3) at (-2.2,-1.2) {};
    \draw[makeredge] (v) -- (a1);
    \draw[makeredge] (v) -- (a2);
    \draw[makeredge] (v) -- (a3);

    \node[vertex] (y1) at (7.5,4.0) {};
    \node[vertex] (y2) at (7.9,3.1) {};
    \node[vertex] (y3) at (8.0,2.1) {};
    \node[vertex] (y4) at (7.8,1.1) {};
    \foreach \y in {y1,y2,y3,y4} \draw[setx] (x) -- (\y);

    \node[vertex] (z1) at (7.8,-1.1) {};
    \node[vertex] (z2) at (8.0,-2.1) {};
    \node[vertex] (z3) at (7.9,-3.1) {};
    \node[vertex] (z4) at (7.5,-4.0) {};
    \foreach \z in {z1,z2,z3,z4} \draw[setxp] (xp) -- (\z);

    \draw[dashed, line width=1pt, blue!70!black]
        (6.2,2.45) ellipse [x radius=2.4, y radius=2.2];
    \node[blue!70!black] at (9.4,4.1) {$E^M_x$};

    \draw[dashed, line width=1pt, orange!85!black]
        (6.2,-2.45) ellipse [x radius=2.4, y radius=2.2];
    \node[orange!85!black] at (9.4,-4.1) {$E^M_{x'}$};
    
    \draw[dashed] (x) -- (xp);

    \end{tikzpicture}
    \caption{Two dangerous edges $\{v,x\}$ and $\{v,x'\}$ at a low-degree vertex~$v$.
    Green edges are claimed dangerous edges; all other edges are Maker's claimed edges.
    Edges in the blue ellipse form $E^M_x$ and edges in the orange ellipse form $E^M_{x'}$.}
    \label{fig:disjointEMx}
\end{figure}

Thus, the total number of Maker edges is bounded by 
\[\frac{n^2}{2q}\geq e(G_M)\geq \abs{\bigcup_{\{v,x\}\text{ dangerous}} E^M_{x}}= \sum_{\{v,x\}\text{ dangerous}} \abs{E^M_{x}}\geq \sum_{\{v,x\}\text{ dangerous}} \frac{q}{2}=\frac{q}{2}\nD(v),\]
where the first inequality follows from \ref{strat:TA claim all} of \Cref{strat:TA} as the game ends after at most $\ceil{\frac{\binom n2}{q+1}}\le \frac{n^2}{2q}$ rounds.
This implies $\nD(v)\leq \frac{n^2}{q^2}\leq\frac{n}{2}$, since $q^2 \geq 2n$.
\end{proof}

We are now ready to give the missing proof for \Cref{lemma:c_geq0}.
\begin{proof}[Proof of \cref{lemma:c_geq0}]
    We do a case distinction on the degree of $v$.
    \begin{itemize}[label=--, left=0pt, itemindent=!, listparindent=\parindent, parsep=0pt]
        \item[] \emph{Case $\frac{q}{2}\leq \degM(v)\leq q$:} Let $x$ be such that $\degM(v) = \frac{q}{2}+x$.  If $x=0$, then
        \begin{align*}
           \bud(v)
            &=\left(\frac{q}{2}\right)^2 - \frac{1}{2}\nT(v) - \frac{1}{2}\nH(v) - \nC(v) - \frac{\frac{q}{2}}{q}\nD(v) - \sum_{w\in \Nearly(v)}\frac{\alpha_v}{q}\nD(w)\\
            &=\left(\frac{q}{2}\right)^2 - \frac{1}{2}\nT(v) - \frac{1}{2}\nH(v) - \frac{1}{2}\nD(v)\\
            &\ge \frac{1}{2}\big(n-\nT(v)-\nH(v)-\nD(v)\big)\\
            & \stackrel{\mathclap{\ref{eq:THDC<n}}}{\geq} 0,
        \end{align*} 
        where in the first inequality, we used $q^2\ge 2n$ and that $v$ is still open if $\degM(v)=\frac{q}{2}$.
        
        If $x>0$, note that
        \[\frac{\alpha_vq}{2x}=\frac{\left(\frac{2\degM(v)}{q}-1\right)q}{2x}=\frac{2\degM(v)-q}{2x}=\frac{q+2x-q}{2x}=1.\]
        Thus, we can conclude
        \begin{align*}
           \bud(v)
            &= \frac{q}{2}\left(\frac{q}{2}+x\right) - \frac{1}{2}\nT(v) - \frac{1}{2}\nH(v) - \nC(v) - \frac{\frac{q}{2}+x}{q}\nD(v) - \sum_{w\in \Nearly(v)}\frac{\alpha_v}{q}\nD(w)\\
            & \geq \frac{1}{2}\left(n - \nT(v) - \nH(v) - \nD(v)-\nC(v)\right) + \frac{x}{q}\left(n-\nC(v)-\nD(v) - \frac{\alpha_v}{x}\sum_{w\in \Nearly(v)}\nD(w)\right)\\
            & \stackrel{\mathclap{\ref{eq:THDC<n}}}{\geq} \frac{x}{q}\left(n-\nC(v)-\nD(v) - \frac{\alpha_v}{x}\sum_{w\in \Nearly(v)}\nD(w)\right)\\
            & \stackrel{\mathclap{\ref{eq:Dv<=Sw}}}{\geq} \frac{x}{q}\left(n -\nC(v) - \nD(v)-\frac{\alpha_v}{x}|\Nearly(v)|\nS(v)\right)\\[7pt]
            & \geq \frac{x}{q}\left(n - \nC(v) - \nD(v)-\frac{\alpha_vq}{2x}\nS(v)\right)\\[5pt]
            & = \frac{x}{q}\left(n - \nC(v) - \nD(v)-\nS(v)\right) \stackrel{\ref{eq:DSC<n}}{\geq} 0,
        \end{align*} 
        where in the first inequality we used $q^2\geq 2n$, and $\nC(v)=0$ unless $\frac{x}{q}\ge\frac{1}{2}$.

        \item[] \emph{Case $\degM(v)<  \frac{q}{2}$:}
        \begin{align*}
           \bud(v) 
            =&  \frac{q}{2}\degM(v) - \frac{1}{2}\nT(v)  - \frac{\degML(v)}{q}\nD(v) - \sum_{u\in \Ninvearly(v)}\frac{1-\alpha_u}{q}\nD(v)\\
            =& \sum_{\substack{w\in N_{G_M}(v)\\ \degM(w) < \frac{q}{2}}} \frac{q}{2} - \frac{\nT_w(v)}{2} - \frac{\nD(v)}{q}
            +\sum_{\substack{w\in N_{G_M}(v)\\ \degM(w) \geq \frac{q}{2}\\ w\not\in \Ninvearly(v)}} \frac{q}{2} - \frac{\nT_w(v)}{2}
            + \sum_{w\in \Ninvearly(v)} \frac{q}{2}- \frac{\nT_w(v)}{2} - \frac{1-\alpha_w}{q}\nD(v)\\
            &\hspace{-.87cm}\stackrel{\ref{eq:T_u(v)},\ref{eq:degM{v}},\ref{lemma:dangerous n/2}}{\geq} \sum_{\substack{w\in N_{G_M}(v)\\ \degM(w) < \frac{q}{2}}}  \frac{q}{2} - \frac{q}{4} - \frac{n}{2q} 
            +\sum_{\substack{w\in N_{G_M}(v)\\ \degM(w) \geq \frac{q}{2}\\ w\not\in \Ninvearly(v)}} \frac{q}{2} - \frac{q}{2} 
            + \sum_{w\in \Ninvearly(v)} \frac{q}{2}- \frac{\degM(w)}{2} - \frac{(1-\alpha_w)n}{2q}\\
            \geq & \sum_{w\in \Ninvearly(v)} \frac{q}{2}- \frac{\degM(w)}{2} - \frac{\left(1-\left(\frac{2\degM(w)}{q}-1\right)\right)n}{2q}\\
            = & \sum_{w\in \Ninvearly(v)} \frac{q}{2}- \frac{\degM(w)}{2} - \left(\frac{n}{q} - \frac{\degM(w)n}{q^2}\right)\\
            \geq & \sum_{w\in \Ninvearly(v)}\frac{q}{2}- \frac{\degM(w)}{2} - \left(\frac{q}{2} - \frac{\degM(w)}{2}\right)\geq 0.\qedhere
        \end{align*}
    \end{itemize}
\end{proof}

Now, we have proved that \Cref{strat:TA} is a winning strategy for the \btatg played on $K_n$ if $q$ is at least the smallest even integer satisfying $q\ge\ceil{\sqrt{2n}}$. Therefore, it is also a winning strategy for the \btg played on $K_n$ for those~$q$. Thus, we can ask again how big the budget usage is. As it turns out, it is still the worst possible amount asymptotically. 

\begin{proposition}
    Let $q$ be the smallest even integer satisfying $q\ge\ceil{\sqrt{2n}}$ and let $\mathcal S$ be \Cref{strat:TA} for the \btg played on $K_n$. Then $\budget(\mathcal S)=\Theta\big(n^2\big)$.
\end{proposition}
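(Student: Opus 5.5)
The upper bound $\budget(\mathcal S)\le\binom n2=\O(n^2)$ is trivial, since Breaker claims at most $\binom n2$ edges in total. For the lower bound, I would mimic the proof of \Cref{prop: budget usage}. The goal is a Maker strategy that first builds many vertices of Maker degree $q-1$ without triggering any large Breaker moves. Maker then forces Breaker to close these vertices one per round, so that each such round costs Breaker $\Omega(n)$ edges.

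\textbf{Construction.} Let $k\coleq\floor{n/4}$ and take distinct vertices $u_1,\dots,u_k,w_1,\dots,w_k$. Let $G$ be the bipartite circulant graph in which $u_i$ is adjacent to $w_i,\dots,w_{i+q-2}$ (indices mod $k$). Since $q=\Theta(\sqrt n)<k$ for large $n$, $G$ is $(q-1)$-regular and triangle-free.

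\textbf{Maker builds $G$.} Maker claims the edges of $G$ one by one. We must check that Breaker never claims an edge of $G$ while this happens.
\begin{itemize}
\item Threat edges lie inside the sides of the bipartition, never in $G$, since $G$ is bipartite.
\item No vertex reaches degree $q$, so no vertex is closed. (Step~\ref{strat:TA claim all} does not trigger, since far fewer than $\binom n2/(q+1)$ rounds have passed.)
\item High-degree edges are the main concern: once vertices reach degree $q/2$, Breaker claims all unclaimed edges between high-degree vertices, which could include edges of $G$. To avoid this, Maker orders her moves in two phases. First she builds a $(q/2-1)$-regular subgraph of $G$, so all vertices stay low-degree. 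Then she completes vertices one at a time: she finishes all remaining $G$-edges at $u_1$, then at $u_2$, and so on.
\item This ordering alone is not enough, because completing $u_i$ pushes its $w$-neighbours to high degree early. The cleanest fix, which I would try first, is to replace the circulant by a bipartite $(q-1)$-regular graph $G$ on two sides and have Maker claim its edges in an order where every remaining edge of $G$ always has at least one low-degree endpoint. An alternative is to accept that Breaker steals some $G$-edges and only require that each chosen vertex ends with Maker degree $q-1$ after adding replacement edges to fresh vertices.
\item Dangerous edges: an edge is dangerous only if $\threats>q$. An edge of $G$ has one low-degree endpoint and a neighbourhood that is mostly inside the bipartite structure, so I would bound $\threats$ on such edges directly.
\end{itemize}
I expect this careful ordering, making sure Breaker's steps~\ref{strat:TA high} and~\ref{strat:TA dangerous} never hit $G$, to be the main obstacle.

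\textbf{Forcing the closures.} Once $\Theta(n)$ vertices have Maker degree $q-1$ and the remaining $\Theta(n)$ vertices are untouched, Maker repeatedly claims an unclaimed edge from such a vertex $v$ to a fresh vertex. This raises $\degM(v)$ to $q$ and forces Breaker to close $v$, claiming all its unclaimed edges. Before $v$ is closed, Breaker has claimed at most $O(q^2)=O(n)$ edges at $v$ through threats and high-degree edges, plus whatever dangerous edges touch $v$. Hence I would instead keep $\Omega(n)$ fresh vertices untouched and count only closing edges into that fresh set, giving at least $\Omega(n)$ new edges per closure.

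\textbf{Counting.} Over $\Theta(n)$ such rounds Breaker claims $\Omega(n^2)$ edges, while his per-round allowance totals only $\Theta(n)\cdot q=O(n^{3/2})$. Therefore the excess sequence sums to $\Omega(n^2)$, which gives $\budget(\mathcal S)=\Theta(n^2)$.
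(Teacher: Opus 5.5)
Your proposal has a genuine gap exactly where you expected it. The construction phase is never carried out, and the fix you would try first is impossible. Take any $(q-1)$-regular $G$ and let $\{a,b\}$ be the last edge of $G$ that Maker claims. Just before that move, $\deg_M(a),\deg_M(b)\ge q-2\ge q/2$ (recall $q\ge 4$), so both endpoints are already high-degree. Hence at an earlier Breaker turn $\{a,b\}$ was an unclaimed high-degree edge, and he claimed it in step~(iii). More generally, the last $G$-edge Maker claims must have an endpoint of $G$-degree at most $q/2$. So no ordering lets her complete a graph in which all degrees are $q-1$. The ``replacement edges to fresh vertices'' alternative and the bound on dangerous edges are only promised, not given.

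Your parenthetical about step~(i) is also false. Step~(i) fires exactly when $(q+1)e(G_M)\ge\binom n2$: Breaker's remaining allowance and the number of unclaimed edges both drop by $e(G_B)$, so $e(G_B)$ cancels. Your graph gives $(q+1)e(G)=\floor{n/4}(q^2-1)\ge\floor{n/4}(2n-1)$, which is at least $\binom n2$, e.g.\ whenever $4\mid n$. So Breaker would claim every remaining edge before your closure phase could even start.

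The missing idea is to use step~(iii) as the source of Breaker's large moves, not treat it as an obstacle. The paper sets $d=q/2-1$ and lets $G$ be $k=\floor{n/(4d)}$ vertex-disjoint copies of $K_{d,d}$. While Maker claims $G$, every vertex stays low-degree. Consequently:
\begin{itemize}
\item steps~(iii)--(v) never fire, since an edge with two low-degree endpoints creates fewer than $q$ threats;
\item Breaker claims only threat edges inside the sides of each copy;
\item $e(G)\le nq/8$ stays well below the step~(i) threshold $\binom n2/(q+1)$, which is roughly $nq/4$.
\end{itemize}
Then, for at most $2dk$ rounds, Maker claims an edge at a low-degree vertex of $G$ that still has an unclaimed edge, which makes it high-degree. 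Afterwards all $(2d)^2\binom k2=\Theta(n^2)$ edges between different copies are claimed, all but at most $2dk$ of them by Breaker. This happens in $\Theta(n)$ rounds whose total allowance is only $\Theta(n^{3/2})$. No closures are needed.
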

\begin{proof}
    As Breaker claims at most $\binom{n}{2}$ edges, we have $\budget(\mathcal S)\le \binom{n}{2}=\O\big(n^2\big)$.

    For the other direction, we give a strategy for Maker such that, if Breaker plays according to \Cref{strat:TA}, then his play has a budget usage of $\Omega\big(n^2\big)$. For this, let $d\coleq \frac{q}{2}-1$ and $k\coleq \floor{\frac{n}{4d}}$. Note that $d=\Theta\big(\sqrt n\big)$ and $k=\Theta\big(\sqrt n\big)$. Furthermore, let $G$ be the union of $k$ vertex-disjoint copies of $K_{d,d}$.
    
    In the first phase, Maker claims the edges of $G\subs K_n$. As all vertices have a degree smaller than $q/2$, all vertices are still low-degree, so Breaker does not claim any high-degree, closing, or dangerous edges. Furthermore, $e(G)$ rounds are clearly not enough rounds for Breaker to have enough budget to claim all remaining unclaimed edges. Therefore, Breaker might only claim some threat edges, but as $G$ is triangle-free, no edge of $G$ will be claimed by Breaker. After Maker claimed all edges of $G$, $G_M$ consists of $k$ copies of $K_{d,d}$ and no edges between the copies of $K_{d,d}$ have been claimed by Maker or Breaker. 

    In the next $\le 2dk$ rounds, while there is still a vertex $v\in V(G)$ which is low-degree and incident to an unclaimed edge, Maker will claim an edge incident to $v$. That way, $v$ becomes high-degree, and Breaker has to claim all edges from $v$ to any other high-degree vertices.

    After these $\le 2dk$ rounds, every $v\in V(G)$ is high-degree or not incident to an unclaimed edge. Therefore, all $(2d)^2\binom{k}{2}$ edges between the copies of $K_{d,d}$ have been claimed and Maker has claimed at most $2dk$ of them. Thus, Breaker has claimed at least
    \[(2d)^2\binom{k}2-2dk=\Theta\big(n^2\big)\]
    edges in $2dk=\Theta(n)$ rounds. As the budget Breaker got in these $2dk$ rounds is $2dk\cdot q=\Theta\big(n^{3/2}\big)$, this implies that the budget usage of $\mathcal S$ is $\Theta\big(n^2\big)$.
    \end{proof}

%% file: Chapter/4_kfour.tex
\section{The \texorpdfstring{\kfourg}{KFG}}\label{sec: K_4 game}
In this section, we prove the first upper bounds for the \kfourg and the \bkfourg (\Cref{thm: budget K_4} and \Cref{thm: non-budget K_4}). A lower bound for the \kfourg is given in \Cref{thm: kfour lower bound}. As this is done by a careful analysis of the existing proofs for the asymptotic bound given by \cite{hgameforhypergraphs}, we postpone the proof of \Cref{thm: kfour lower bound} to appendix \ref{sec: appendix lower bound kfourg}. \medskip

To prove an upper bound for the \bkfourg and the \kfourg, we state a winning strategy for Breaker. The winning strategy will be the same for both variants of the game. For this, we first have to generalise the threat definition to the $K_4$ setting.

\begin{definition}
    An unclaimed edge $\{u,v\}$ is a \emph{threat} if there are two distinct vertices $w,x\in V(G)\setminus\{u,v\}$ such that $\binom{\{u,v,w,x\}}{2}\setminus\big\{\{u,v\big\}\}\subs E(G_M)$.
\end{definition}

The idea behind Breaker's strategy is the following: if $\{u,v\}$ is a threat, then there must be two vertices $w,x\in V(G)$ such that $G_M$ contains both triangles $\{u,w,x\}$ and $\{v,w,x\}$ which share an edge $\{w,x\}$. Therefore, if Breaker can bound the number of triangles an edge is part of in $G_M$, he can also bound the number of threats Maker can create. Breaker will use one part of his edges to prevent these unions of triangles and the other part to close the threats that Maker is still able to create. 

To formalise this strategy, we first need the standard notion of a $t$-book.

\begin{definition}
    Given two vertices $v$ and $w$, a \textit{wedge over $v$ and $w$} consists of two edges of the form $\{v,u\},\{w,u\}$ where $u\in V(G)$. A \textit{$t$-book} consists of an edge $\{v,w\}$ and $t$ distinct wedges over $v$ and $w$, i.e., of $t$ triangles sharing an edge.
\end{definition}

Note that if there are no $t$-books in $G_M$, then every edge in $E(G_M)$ can be part of at most $t-1$ triangles in $G_M$. Our strategy for Breaker relies on the following result given by \citet{beck1982remarks} on a winning strategy in the \hypermakerbreaker, which we state here, adapted to our setting. See appendix \ref{sec: appendix} for a description of the \hypermakerbreaker and a transformation from an $H$-Game.

\begin{theorem}[\citet{beck1982remarks}, Theorem 1]\label{thm: beck}
    For a graph $H$ and a positive integer $n$, denote by $\mu_n(H)$ the number of copies of $H$ in the complete graph $K_n$. If $\mu_n(H)(1+q)^{-|E(H)|} < (1+q)^{-1}$, then Breaker has a winning strategy in the \hgame played on~$K_n$.
\end{theorem}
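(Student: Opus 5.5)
The plan is a Beck-type weight (potential) argument with Breaker playing greedily. Throughout the game, call a copy $F$ of $H$ in $K_n$ \emph{alive} if Breaker has claimed none of its edges, and write $u(F)$ for the number of its still unclaimed edges. Define the potential
\[
\Phi\coleq\sum_{F\text{ alive}}(1+q)^{-u(F)}.
\]
For an unclaimed edge $e$, let $\Phi(e)$ be the same sum restricted to alive copies containing $e$. Maker has completed a copy $F$ exactly when $F$ is alive with $u(F)=0$, and such a copy contributes $1$ to $\Phi$. So it suffices to show that $\Phi<1$ immediately after every Maker move.

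At the start, $\Phi_0=\mu_n(H)(1+q)^{-|E(H)|}<(1+q)^{-1}$ by hypothesis. Maker's first edge $e$ multiplies the terms of the alive copies through $e$ by $1+q$. The potential therefore grows by exactly $q\Phi(e)\le q\Phi_0$, so $\Phi\le(1+q)\Phi_0<1$ after Maker's first move.

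Next I fix Breaker's strategy: in each turn he claims $q$ edges one at a time (all remaining edges if fewer are left), each time taking an unclaimed edge $f_i$ that maximises the current value $\Phi(f_i)$. Claiming $f_i$ kills precisely the alive copies through $f_i$, so $\Phi$ drops by exactly $\Phi_i(f_i)$, the value at the moment of choice. During Breaker's turn no weight of any copy increases; copies only die. Hence $\Phi(e)$ is non-increasing during the turn for every edge $e$ that remains unclaimed.

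The key step compares one Breaker turn with the Maker move that follows it. Let $e$ be Maker's next edge. It was unclaimed throughout Breaker's turn, so greediness gives $\Phi_i(f_i)\ge\Phi_i(e)\ge\Phi_{\mathrm{end}}(e)$ for every $i$. Breaker's turn therefore decreases $\Phi$ by at least $q\,\Phi_{\mathrm{end}}(e)$. Maker's move then increases $\Phi$ by exactly $q\,\Phi_{\mathrm{end}}(e)$. Consequently the potential measured after Maker's moves is non-increasing, and it stays below $1$ forever. This shows that Maker never completes a copy of $H$.

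The only delicate points are the following, and I regard the accounting as the main thing to get right.
\begin{enumerate}
\item Since Maker moves first, the round has to be shifted as \enquote{Breaker turn, then Maker move}. This is exactly why the hypothesis carries the extra factor $(1+q)^{-1}$.
\item The decrease caused by $f_i$ must be measured at the moment $f_i$ is chosen, so that no copy is counted twice.
\item If fewer than $q$ edges remain, Breaker claims all of them and the game ends with no unclaimed edge left for Maker, so the comparison step is never needed.
\end{enumerate}
Since this strategy respects the per-turn limit of $q$, it is also admissible in the budget variant.
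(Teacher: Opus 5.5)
Your proof is correct. It is the standard biased Erdős--Selfridge potential argument with a greedy Breaker: the decrease from each Breaker edge is measured at the moment it is chosen, and the game is regrouped as a Breaker turn followed by a Maker move, which accounts for the extra factor $(1+q)^{-1}$. The paper does not prove this statement itself; it simply cites \citet{beck1982remarks}, whose proof is this same argument.
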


Now, we are ready to state Breaker's strategy for the \bkfourg and the \kfourg. The value of $t$ is set in the corresponding theorems later and depends on which game is played.

\begin{strategy}[Breaker's strategy for the \bkfourg and \kfourg]\label{strat: k4 budget}
    At each turn, Breaker does the following:
    \begin{enumerate}
        \item\label{enum: K4 book} Use the strategy given by \citet{beck1982remarks} to prevent Maker from claiming a $t$-book for some fixed $t$.
        \item\label{enum: K4 threat} Claim all threat edges. 
    \end{enumerate}
\end{strategy}

To prove that Breaker can apply this strategy, we will show that per turn Breaker will never claim more than $q_1$ edges to play part (i) of \cref{strat: k4 budget} and not more than $q_2$ edges to play part (ii) of \cref{strat: k4 budget} (amortised in the case of a budget). It then only remains to prove that $q_1 + q_2 \leq q$. The values for $q_1$ and $q_2$ will differ in the proofs depending on whether Breaker plays the \bkfourg or the \kfourg. Note that to play the strategy, it is not important to know the values of $q_1$ and $q_2$ but only $t$.\par

Let us start by giving an upper bound on the number of edges needed for part (i) of \cref{strat: k4 budget}.

\begin{lemma}\label{lem: bound q1 for strat (i)}
     Let $t=\omega(\log n)$ with $n$ running to infinity. If $n$ is large enough, part (i) of \Cref{strat: k4 budget} requires at most $\big(1+o(1)\big)\sqrt{ne/t}$ edges each round, without a budget.
\end{lemma}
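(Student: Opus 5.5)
We plan to apply \Cref{thm: beck} with $H$ a $t$-book. Write $B_t$ for the $t$-book: it has $t+2$ vertices and $2t+1$ edges. The number of copies of $B_t$ in $K_n$ is
\[\mu_n(B_t)=\binom n2\binom{n-2}{t}\le \frac{n^2}{2}\cdot\frac{n^t}{t!},\]
since a copy is determined by the spine edge and an unordered set of $t$ apex vertices. Part (i) of \Cref{strat: k4 budget} is a $(1:q_1)$ Maker Breaker game on the hypergraph of $t$-books, in which Breaker claims $q_1$ edges per round. Its winning sets are copies of $B_t$, and Maker's edges that are irrelevant to this game only help Breaker (via the standard reduction in the appendix). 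It therefore suffices to find $q_1$ with
\[\mu_n(B_t)(1+q_1)^{-(2t+1)}<(1+q_1)^{-1},\qquad\text{i.e.}\qquad \mu_n(B_t)<(1+q_1)^{2t}.\]

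Next we solve for $q_1$. Using Stirling, $t!\ge (t/e)^t$, the requirement is implied by
\[\frac{n^2}{2}\Big(\frac{ne}{t}\Big)^t<(1+q_1)^{2t}.\]
Taking $2t$-th roots, it is enough that
\[1+q_1> \Big(\frac{n^2}{2}\Big)^{1/(2t)}\sqrt{\frac{ne}{t}}.\]
Since $t=\omega(\log n)$, we have $(n^2/2)^{1/(2t)}=\exp\big(O(\log n)/t\big)=1+o(1)$. So $q_1=\big(1+o(1)\big)\sqrt{ne/t}$ suffices, where the $-1$ is absorbed into the $o(1)$ as long as $\sqrt{n/t}\to\infty$. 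If instead $t$ is so large that $\sqrt{ne/t}$ is bounded, we would just take $q_1$ to be the integer ceiling, and the claimed bound holds in the sense of the lemma. In both cases we choose $q_1$ as the smallest integer satisfying the displayed inequality.

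The one point needing care, and the likely main obstacle, is justifying that Beck's criterion applies to the book game while Breaker simultaneously plays part (ii). Breaker's extra threat edges are simply additional Breaker edges. Beck's potential-function strategy is monotone in Breaker's claims: an already-claimed edge kills the sets containing it and only decreases the potential. This is standard for Erdős--Selfridge-type arguments, so Breaker can follow Beck's strategy using exactly $q_1$ edges per round, with no budget needed. If a suggested edge has already been claimed in part (ii), Breaker claims an arbitrary edge instead, or fewer edges, which does not hurt the potential argument. Maker's one edge per round is fixed by the game. Hence part (i) costs at most $\big(1+o(1)\big)\sqrt{ne/t}$ edges per round.
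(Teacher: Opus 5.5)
Your proposal is correct and follows the paper's proof. Both apply Beck's criterion to the $t$-book, with $\binom{n}{2}\binom{n-2}{t}$ copies and $2t+1$ edges, bound $\binom{n-2}{t}\le (ne/t)^t$, and use $t=\omega(\log n)$ so that the $\binom{n}{2}$-factor contributes only $1+o(1)$ after taking roots. Your remark that Beck's potential argument is unaffected by the extra edges claimed in part (ii) is a point the paper leaves implicit. Your worry about the $-1$ is unnecessary: the smallest integer $q_1$ satisfying your displayed inequality is already at most its right-hand side.
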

\begin{proof}
    This is a direct consequence of \Cref{thm: beck}. Indeed, a $t$-book has $1+2t$ edges and in a complete graph there are $\binom{n}{2}\binom{n-2}{t}$ $t$-books. Following \Cref{thm: beck}, Breaker can prevent Maker from claiming a $t$-book in $K_n$ if the number $q$ of edges that Breaker may claim every turn fulfils
    \begin{align}
        &\binom{n}{2}\binom{n-2}{t}(q+1)^{-(1+2t)} < 1/(q+1)\notag\\
        \Leftrightarrow \quad & \left(\binom{n}{2}\binom{n-2}{t}\right)^{1/t} < (q + 1)^2.\label{eq: erdos selfridge q1 k4}
    \end{align}
    For $n$ large enough, it holds that
    \begin{equation*}
        \binom{n}{2}^{1/t} < 2^{-1/t}n^{2/t} = 2^{-1/t}e^{\frac{2\log n}{t}}  = 1+ o(1),
    \end{equation*}
    where we used $t= \omega(\log n)$ in the last equality. Furthermore, 
    \begin{align*}
        \binom{n-2}{t}^{1/t} < \binom{n}{t}^{1/t} < \left(\frac{ne}{t}\right)^{t/t} = \frac{ne}{t}.
    \end{align*}
    Therefore, if $n$ is large enough, the maximum number of edges needed to perform part (i) of \Cref{strat: k4 budget} is $\big(1+o(1)\big)\sqrt{ne/t}$.
\end{proof}
It is unknown if this result can be improved if Breaker were allowed to store edges in a budget. With this lemma in hand, we are finally able to prove the upper bounds for the \bkfourg and the \kfourg.

\thmbudgetkfour*
\begin{proof}
    Fix any $\eps>0$, let $n$ be large enough and set $q\coleq\ceil{\left(5\cdot 2^{-9/5}e^{2/5} + \eps \right)n^{2/5}}$.
    We will show that \Cref{strat: k4 budget} with $t \coleq \ceil{\left(2^{-2/5}e^{1/5}\right)n^{1/5}}$ is a winning strategy for Breaker for the \bkfourg played on  $K_n$.

    By \ref{enum: K4 threat} of \Cref{strat: k4 budget}, Maker will never be able to claim a~$K_4$. All we have to show is that Breaker can always perform the steps \ref{enum: K4 book} and \ref{enum: K4 threat} of \Cref{strat: k4 budget} without claiming more than $q\cdot  e(G_M)$ edges.
    
    Define $q_1 \coleq \ceil{(1+\eps/10)\sqrt{ne/t}} \le  \left(2^{1/5}e^{2/5}+\eps/2\right)n^{2/5}$ and $q_2 \coleq \ceil{t^2/2} \le \left(2^{-9/5}e^{2/5}+\eps/2\right)n^{2/5}$.
    Since $t \in\omega(\log n)$, \Cref{lem: bound q1 for strat (i)} implies that the number of edges Breaker needs to play part~\ref{enum: K4 book} of \Cref{strat: k4 budget} is bounded by~$q_1$. Next, we prove that a budget increasing by $q_2$ edges each turn is enough to play part~\ref{enum: K4 threat} of \Cref{strat: k4 budget}, i.e., to close all threats.
    \begin{claim}
        If the graph $G_M$ does not contain a $t$-book, then Maker can have created at most $|E(G_M)|\frac{t^2}{2}$ threats in total.
    \end{claim}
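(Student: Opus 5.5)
We bound the number of threats newly created by each single Maker edge and then sum over all Maker edges. A threat $\{u,v\}$ exists at some moment only if there are $w,x$ such that all five edges of $\binom{\{u,v,w,x\}}{2}\setminus\{\{u,v\}\}$ are in $G_M$. Such a threat is \emph{created} at the moment Maker claims the last of these five edges. So it suffices to show that when Maker claims an edge $e=\{a,b\}$ and $G_M$ (including $e$) contains no $t$-book, the number of unclaimed edges that become threats is at most $t^2/2$, or at least that the total over the game is at most $e(G_M)\,t^2/2$.

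Fix the new Maker edge $e=\{a,b\}$ and consider a threat $\{u,v\}$ created by it, witnessed by $w,x$. Up to symmetry there are two cases for the role of $e$ among the five edges.

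\emph{Case 1: $e=\{w,x\}$ is the spine.} Then $u$ and $v$ are both apexes of triangles on $e$, i.e.\ $u,v$ lie in the common Maker neighbourhood of $a$ and $b$. Since $G_M$ has no $t$-book, this common neighbourhood has at most $t-1$ vertices. Hence this case yields at most $\binom{t-1}{2}$ threats.

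\emph{Case 2: $e$ is one of the four side edges,} say $e=\{u,w\}$ (so $a=u$, $b=w$ up to swapping). Then $x$ is a common neighbour of $a$ and $b$, which gives at most $t-1$ choices. Given $x$, the vertex $v$ is a common neighbour of $w=b$ and $x$ other than $a$. The edge $\{b,x\}$ lies in $G_M$, so by the no-$t$-book condition it has at most $t-1$ common neighbours, one of which is $a$; this leaves at most $t-2$ choices for $v$. Counting both orientations ($a=u$ or $a=w$, and symmetrically $u\leftrightarrow v$) gives at most $2(t-1)(t-2)$ threats.

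Adding the cases gives roughly $\tfrac52 t^2$ threats per edge, which is a factor $5$ too large for the claimed $t^2/2$. The main obstacle is therefore to avoid counting each $K_4^-$ up to five times. I would try a charging argument: each created threat corresponds to a $K_4^-$ in $G_M$, i.e.\ two triangles sharing the spine $\{w,x\}$. Charge each threat to its spine edge instead of to the edge that completed it. Each unordered pair of apexes $\{u,v\}$ among the at most $t-1$ common neighbours of the spine gives at most one threat. So every Maker edge is charged at most $\binom{t-1}{2}\le t^2/2$ times over the whole game, regardless of when the charge occurs. Summing over spines bounds the total number of threats ever created by $e(G_M)\,t^2/2$, because distinct threats are distinct (pair, spine) incidences. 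Together with the per-round budget $q_2=\lceil t^2/2\rceil$, this amortised bound is exactly what the budget version needs.

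The delicate point I would double-check is that the spine must be a Maker edge. If $\{u,v\}$ is a threat, all edges other than $\{u,v\}$ are Maker edges, so $\{w,x\}$ is indeed in $G_M$ and the charging is well defined. The no-$t$-book hypothesis must also hold at all times up to the current one, which \ref{enum: K4 book} of \Cref{strat: k4 budget} guarantees.
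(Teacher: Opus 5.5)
\textbf{Verdict: correct, by a genuinely different route.} Your decisive argument is the charging paragraph, and it is correct. Each edge $\{u,v\}$ that was ever a threat has a witnessing spine $\{w,x\}\in E(G_M)$, and $u,v$ lie in the common Maker neighbourhood of $w$ and $x$. By the no-$t$-book hypothesis this neighbourhood has at most $t-1$ vertices. Assigning each threat one spine gives an injection from threats into pairs (spine, apex pair), since the apex pair is the threat itself. This yields the total bound $e(G_M)\binom{t-1}{2}\le e(G_M)\,t^2/2$.

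\textbf{How the paper argues.} The paper fixes a vertex $v$ and bounds the number $e_v$ of threat edges $\{v,w\}$ by following the walk $v\to x\to y\to w$:
\begin{itemize}
\item choose $x\in N_{G_M}(v)$, in $\deg_M(v)$ ways;
\item choose $y$ as a common neighbour of $v$ and $x$, in at most $t$ ways;
\item choose $w$ as a common neighbour of $x$ and $y$, in at most $t$ ways.
\end{itemize}
So the book bound is used twice. The paper then halves for the $x\leftrightarrow y$ symmetry and sums $e_v\le \deg_M(v)\,t^2/2$ over all $v$ with a factor $\tfrac12$.

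\textbf{Comparison.} Your approach uses the book bound only once per Maker edge. It is a direct amortised charge to Maker edges, which is exactly the form the budget accounting with $q_2$ per Maker move needs. It also gives the marginally sharper constant $\binom{t-1}{2}$. The paper's approach gives an extra local statement: at most $\deg_M(v)\,t^2/2$ threats are ever incident to each vertex $v$. That is the kind of input a vertex-wise budget analysis, as in \Cref{sec: budget triangle game}, would use. The leading term $t^2/2$ is the same in both, so the constant in \Cref{thm: budget K_4} is unaffected.

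\textbf{Minor remarks.}
\begin{itemize}
\item Your Cases 1 and 2 are not needed for this claim. They are essentially the per-turn count the paper uses for the non-budget \Cref{thm: non-budget K_4}, and they give roughly $\tfrac52 t^2$ per edge, as you noticed.
\item You do not need the no-$t$-book hypothesis at all earlier times. Maker's edges are permanent, so every witness configuration is present in the current $G_M$, and the assumption on the current graph suffices.
\end{itemize}
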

    \begin{proof}
        Fix some vertex $v$. First, we want to bound the number $e_v$ of edges incident to $v$ that were threat edges at some point. Assume $\{v,w\}$ has been a threat edge. Then there must be vertices $x$ and $y$ such that 
        \begin{equation}\label{eq: claim budget egm}
            \{v,x\},\{v,y\},\{w,x\},\{w,y\},\{x,y\}\in E(G_M).
        \end{equation}
        Thus, the number of vertex sets $\{w,x,y\}$ fulfilling \eqref{eq: claim budget egm} is an upper bound for $e_v$. Since $\{v,x\}\in E(G_M)$, there are at most $\degM(v)$ possibilities for $x$. Since $\{v,x\}$ can be in at most $t$ triangles in $G_M$, due to the assumption of the claim, there are at most $t$ choices for $y$ for a fixed $x$. The same argument applied to $\{x,y\}$ implies that there are at most $t$ choices for $w$ for fixed $x$ and $y$. Finally, $x$ and $y$ are interchangeable; hence  $e_v \leq \degM(v)\cdot \frac{t^2}{2}$.

        Summing over all vertices, we can bound the number of edges that became threats by 
        \begin{equation*}
            \frac{1}{2}\sum_{v\in V}e_v \le \frac{1}{2}\sum_{v\in V}\degM(v)\frac{t^2}{2} = |E(G_M)|\frac{t^2}{2}.\qedhere
        \end{equation*}
    \end{proof}
    
    Because $|E(G_M)|$ is equal to the number of Maker's turns, Breaker can play part~\ref{enum: K4 threat} of \Cref{strat: k4 budget} if he gets $q_2 \ge \frac{t^2}{2}$ edges every turn to claim or add to his budget.
    Since $q \geq q_1 + q_2$, Breaker wins playing \Cref{strat: k4 budget}.
\end{proof}

Note that if one could improve on the strategy given by \citet{beck1982remarks} by using a budget, the constant in \Cref{thm: budget K_4} might also change.\par  

Using the same strategy with another fixed $t$, we can give an upper bound on the threshold bias of the \kfourg. 

\thmnobudgetkfour*
\begin{proof}
    Fix any $\eps>0$, let $n$ be large enough and set $q\coleq \ceil{\left(5^{6/5}\;2^{-9/5}e^{2/5} + \eps \right)n^{2/5}}$. We will show that \Cref{strat: k4 budget} with $t\coleq\ceil{\left(10^{-2/5}e^{1/5}\right)n^{1/5}}$ is a winning strategy for Breaker for the \kfourg played on~$K_n$. 

    By \ref{enum: K4 threat} of \Cref{strat: k4 budget}, Maker will never be able to claim a~$K_4$. All we have to show is that Breaker can always perform the steps \ref{enum: K4 book} and \ref{enum: K4 threat} of \Cref{strat: k4 budget} without claiming more than $q$ edges in each turn.
    
    Define $q_1\coleq \ceil{(1+\eps/10)\sqrt{ne/t}} \le  \left(10^{1/5}\;e^{2/5}+\eps/2\right)n^{2/5}$ and $q_2 \coleq \ceil{\frac{5}{2}t^2} \le \left(5^{1/5}\;2^{-9/5}\;e^{2/5}+\eps/2\right)n^{2/5}$. The proof follows the same strategy as the proof of \Cref{thm: budget K_4}. Again, since $t=\omega(\log n)$, \Cref{lem: bound q1 for strat (i)} implies that the number of edges Breaker needs to play part~\ref{enum: K4 book} of \Cref{strat: k4 budget} is bounded by $q_1$. Next, we prove that Breaker does not claim more than $q_2$ edges each turn to play part~\ref{enum: K4 threat} of \Cref{strat: k4 budget}, i.e., to close all threats.

    \begin{claim}
        If the graph $G_M$ never contains a $t$-book, then Maker can create at most $\frac{5}{2}t^2\le q_2$ threats each turn.
    \end{claim}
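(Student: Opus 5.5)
The plan is to do a local version of the counting from the claim in the proof of \Cref{thm: budget K_4}. We restrict to the threats that are \emph{newly} created when Maker claims a single edge $\{u,v\}$. Since $G_M$ never contains a $t$-book (also right after Maker's move, which is when the new threats appear), every edge of $G_M$, including $\{u,v\}$, lies in at most $t-1\le t$ triangles of $G_M$. Equivalently, any two vertices adjacent in $G_M$ have at most $t$ common $G_M$-neighbours.

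A new threat $\{a,b\}$ requires two vertices $x,y$ such that all five edges of $\binom{\{a,b,x,y\}}{2}\setminus\{\{a,b\}\}$ lie in $G_M$. Because the threat is new, one of these five edges must be $\{u,v\}$. In this configuration $\{x,y\}$ is the spine shared by the triangles $\{a,x,y\}$ and $\{b,x,y\}$, and the other four edges each join a threat endpoint to a spine vertex. This gives two cases.

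\emph{Case 1: $\{u,v\}=\{x,y\}$ is the spine.} Then $a$ and $b$ are both common $G_M$-neighbours of $u$ and $v$. Since $\{u,v\}$ lies in at most $t$ triangles, there are at most $\binom{t}{2}\le t^2/2$ unordered pairs $\{a,b\}$.

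\emph{Case 2: $\{u,v\}$ joins a threat endpoint to a spine vertex.} Suppose first that $u$ is the threat endpoint and $v$ is a spine vertex, say $u=a$ and $v=x$. Then the other spine vertex $y$ is a common neighbour of $u$ and $v$, which gives at most $t$ choices. Given $y$, the vertex $b$ is a common neighbour of the Maker edge $\{v,y\}$, which gives at most $t$ further choices. So this subcase produces at most $t^2$ threats, each of the form $\{u,b\}$. The symmetric subcase, with $v$ the threat endpoint and $u$ a spine vertex, gives at most another $t^2$ threats.

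Adding the cases, the number of new threats created in one turn is at most $t^2/2+t^2+t^2=\frac52t^2\le q_2$, as claimed.

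The main obstacle is making sure the case split is exhaustive. Every one of the five edges of the $K_4$ minus an edge is either the spine or a spine–endpoint edge, so the cases cover all possibilities. Overcounting across cases is harmless since we only need an upper bound.

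A second point to check is that the book bound must hold in the graph after $\{u,v\}$ is added. This is guaranteed because the hypothesis says $G_M$ \emph{never} contains a $t$-book.

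With the claim in hand, the proof of \Cref{thm: non-budget K_4} finishes as before. Breaker spends at most $q_1$ edges per turn on part~\ref{enum: K4 book} and at most $q_2$ on part~\ref{enum: K4 threat}, and $q_1+q_2\le q$ by the stated estimates on $q_1$ and $q_2$.
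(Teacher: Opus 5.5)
Your proposal is correct and follows essentially the same argument as the paper. The paper makes the same split, according to whether the new Maker edge is opposite the missing edge (at most $\frac{t^2}{2}$ threats) or adjacent to it (at most $2t^2$ threats, via two successive common-neighbour choices). You also add the explicit observation that the book bound must hold right after Maker's move, which the paper leaves implicit.
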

    \begin{proof}
        All threats created in a turn must contain the new Maker edge $e = \{u,v\}$. We want to count how many possibilities there are to choose $w$ and $x$ such that $\{u,v,w,x\}$ is the vertex set of a threat. 
  
        First, assume that the missing edge for a $K_4$ is the edge opposite of $e$, i.e., the edge $\{w,x\}$. In this case, $\{u,v,w\}$ and $\{u,v,x\}$ form two triangles in~$G_M$. Due to the assumption of the claim and because of symmetry in $w$ and $x$, there are at most $\frac{t^2}{2}$ possibilities to choose $\{w,x\}$ in this case.  

        Now assume that the missing edge is incident to one of the endpoints of~$e$. At the cost of adding a factor of~2, we may assume that this endpoint is $v$; without loss of generality, let $\{v,x\}$ be the missing edge. Then $\{u,v,w\}$ and $\{u,w,x\}$ form triangles in~$G_M$. 
        Hence, there are at most $t$ choices for $w$, and, given $w$, at most $t$ choices for $x$. Therefore, there are at most $2t^2$ threats in this case.

        In total, Maker can create at most $\frac{1}{2}t^2 + 2t^2 = \frac{5}{2}t^2 \le q_2$ threats each turn.
    \end{proof}
    Hence, Breaker claims at most $q_2$ edges each round while following part (ii) of \Cref{strat: k4 budget}.
    Since $q \geq q_1 + q_2$, Breaker wins playing \Cref{strat: k4 budget}.
\end{proof}

%% file: Chapter/5_concluding.tex
\section{Concluding Remarks}\label{sec: Concluding}
In this paper, we showed that if Breaker is allowed to keep a budget, then the threshold bias of the \tg is $\big(1+o(1)\big)\sqrt{2n}$ and this is even true if he is not allowed to use the budget to close threats, i.e.\@ Breaker still loses if Maker creates more than $q$ threats in a single turn.

The big open question that still remains is whether the budget is necessary at all to get a threshold bias of $\big(1+o(1)\big)\sqrt{2n}$. To approach this, one can try to find winning strategies $\mathcal S$ in which the worst-case budget usage is smaller than that of \Cref{strat: Breaker triangle}.

\begin{problem}
    For $q=\qTG^b(n)$, is there a winning strategy $\mathcal S$ for Breaker in the \btg played on $K_n$ such that $\budget(S)=o\big(n^2\big)$?
\end{problem}

The definition of $\budget(\mathcal S)$ in \Cref{def: budget usage} was inspired by the norm $\norm\cdot_1$ for sequences. Of course, one could also look at other norms $\norm\cdot_p$. The larger the $p$, the more the large entries in the excess sequence are penalised. If one uses $\norm\cdot_\infty$, one tries to minimise the maximum entry of the excess sequence. Interestingly, in this setting,  \Cref{strat: Breaker triangle} is not of the order $\Theta(n^2)$ as both \ref{enum: threat} and \ref{enum: close} of \Cref{strat: Breaker triangle} only require Breaker to claim $\O(n)$ edges per turn.

Another way to bridge the gap between the budget and the non-budget version is as follows:

\begin{definition}
    For a fixed positive integer $n$, a real number $a\in [0,1]$, a family $\mathcal F\subs 2^{E(K_n)}$ of \emph{winning sets} and a non-negative integer $q$, the \emph{$(1:q)$ $a$-Budget Maker Breaker Game} is the following game with two players, called Maker and Breaker. The players take alternate turns, with Maker beginning and $B\coleq 0$ at the start. In her turn, Maker claims one of the previously unclaimed edges of $K_n$. In his turn, Breaker can claim up to $\floor{q+B}$ previously unclaimed edges of $K_n$. If $b\le q+B$ is the number of edges he claimed, $B$ is updated in the following way:
    \[B\coleq \begin{cases}
        B-(b-q)&b> q\\B+a(q-b)&b\le q.
    \end{cases}\]
    Maker wins if she manages to claim all elements of an $F\in\mathcal F$. Breaker wins if he manages to claim an element of each winning set. 
\end{definition}
In this game, $B$ keeps track of the budget Breaker currently has, and he can spend it at any time. However, when he claims fewer than $q$ in his turn, only an $a$ fraction of his unused edges is stored in the budget. The smaller $a$ is, the more incentive Breaker has to play his given $q$ edges per turn immediately. Note that $a=0$ yields the original $(1:q)$ Maker Breaker game, whereas $a=1$ yields the $(1:q)$ Budget Maker Breaker Game studied in this paper. Therefore, this version gives a continuous way to go from the budget to the non-budget version.

\begin{problem}
    Determine the infimum of all $a\in[0,1]$ for which the threshold bias of the \abtg is $\big(1+o(1)\big)\sqrt{2n}$.
\end{problem}

To get used to the $(1:q)$ $a$-Budget Maker Breaker Game, it might make sense to consider the \emph{$(1:q)$ $a$-Budget Hamilton Game} first, where Maker's goal is to claim the edges of a Hamilton cycle. \cite{Hamiltonresult} showed that in the non-budget version, i.e.\@ for $a=0$, the threshold bias is of the order $\Theta(n/\log n)$, whereas it is easy to see that the threshold bias for $a=1$ is just 1. It would be interesting to know how the threshold function behaves for values of $a$ that are between 0 and~1.

\begin{problem}
    Determine the threshold bias of the $a$-Budget Hamilton Game for all $a\in[0,1]$.
\end{problem}

%% file: Chapter/A_appendix.tex
\section{Appendix} \label{sec: appendix}

\subsection{Proof of the Lower Bound in the \texorpdfstring{\btg}{(1:q) Budget Triangle Game}}\label{sec: lower bound}
Here, we prove \Cref{lem: tg lower bound}, which uses the same strategy and arguments as \erdos{} and Chvátal used to prove the corresponding result in the \tg.

\tglowerbound*

To prove \Cref{lem: tg lower bound}, we have to describe a winning strategy for Maker if $q<\sqrt{2n-2}-\frac{3}{2}$. It turns out that the same strategy works as the one described by \cite{chvatal1978biased} when they initiated the study of the \tg, but we will analyse it a bit more thoroughly to get the above bound. We repeat the strategy here for completeness.

\begin{strategy}[Maker's strategy for the \tg, \cite{chvatal1978biased}]\label{strat: Maker triangle}
    Fix an arbitrary vertex $u$ throughout the game. At each turn, Maker does the following:
    \begin{enumerate}
        \item While there is still an unclaimed edge incident to $u$, claim it.
        \item Otherwise, find an unclaimed edge $vw$ such that $uv,uw\in E(G_M)$ and claim it.
    \end{enumerate}
\end{strategy}

The idea of the strategy is to build a large star so that eventually Breaker cannot claim all edges between endpoints of the star.

\begin{proof}[Proof of \Cref{lem: tg lower bound}]
We show that if $q<\sqrt{2n-2}-\frac{3}{2}$, Maker wins using \Cref{strat: Maker triangle}. To this end, we have to show that if all edges incident to $u$ are claimed for the first time, Maker can find the desired edge~$\{v,w\}$. Suppose this is not possible, and let $\ell\coleq\degM(u)$ at the time when all edges incident to $u$ are claimed.

Since all edges incident to $u$ are claimed and Maker cannot find the desired edge $\{v,w\}$, Breaker must have claimed at least $(n-1-\ell)+\binom{\ell}{2}$ edges. Therefore, we must have
\begin{align*}
   &&(n-1-\ell)+\binom{\ell}{2}&\le \ell\cdot q\\ 
   &\iff&\frac{1}{2}\ell^2-\!\left(q+\frac{3}{2}\right)\ell+(n-1)&\le 0\\
   &\iff& \ell^2-(2q+3)\ell+(2n-2)&\le 0.
\end{align*}
We can conclude 
\begin{equation}\label{eq: ell upper and lower bound}
    q+\frac{3}{2}-\sqrt{\left(q+\frac 32\right)^2-2n+2}\le \ell\le q+\frac{3}{2}+\sqrt{\left(q+\frac 32\right)^2-2n+2}.
\end{equation}
But since $q<-\frac{3}{2}+\sqrt{2n-2}$, we get
\[\left(q+\frac 32\right)^2-2n+2<0,\]
so the radicands in \eqref{eq: ell upper and lower bound} are negative, which is a contradiction.
\end{proof}

\subsection{Asymptotic bound of the \texorpdfstring{\bhgame}{(1:q) Budget H-Game}} \label{sec: appendix asymp bound}
Here, we prove that the asymptotic bound for the threshold bias of the \hgame also holds for the \bhgame.

\Hresult*

\citet{bednarska2000biased} showed this statement for the threshold bias of the original \hgame, i.e., they proved $q_H = \Theta\big(n^{1/m(H)})$.  Since $q_{\HG}^b(n)\le q_{\HG}(n)$, we only have to show that $q_{\HG}^b(n)=\Omega\big(n^{1/m(H)}\big)$. For this, we follow the same random strategy that is used by \cite{bednarska2000biased} to show that $q_{\HG}(n)=\Omega\big(n^{1/m(H)}\big)$. 

\begin{strategy}(Random Maker-Strategy, \cite{bednarska2000biased})\label{strat: random maker}
Maker always selects the next edge uniformly at random from all edges she has not sampled yet. If, by any chance, she selects an edge that has already been claimed by Breaker, her turn is marked as a \emph{failure} and she does not claim an edge in that round.
\end{strategy}

Following this strategy, after $m$ rounds, $G_M$ will be an instance of the random graph $\bG(n,m)$ where some edges are marked as failures. The core of the analysis is the following lemma, which says that even if a small fraction $\delta m$ of the edges in $\bG(n,m)$ are marked as failures, there will be a copy of $H$ in the remaining $(1-\delta)m$ edges of $\bG(n,m)$.

\begin{lemma}[\cite{bednarska2000biased}, Lemma 4]\label{lem: Random graph fact}
    For every graph $H$ containing at least one cycle, there exist constants $0< \delta<1$ and $n_0$ such that for $n\ge n_0$ and $m=2\ceil{n^{2-1/m(H)}}$ with probability at least 2/3, each subgraph of $\bG(n,m)$ with $\floor{(1-\delta)m}$ edges contains a copy of $H$.
\end{lemma}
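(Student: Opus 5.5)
The plan is to use a Janson-type lower-tail estimate together with a union bound over all possible failure sets. The union bound is affordable because the Janson exponent is linear in $m$.

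\textbf{Setup.} First I would pass to the binomial model. Let $N=\binom n2$ and $p\coleq 2m/N$, so that $p=\Theta\big(n^{-1/m(H)}\big)$. The property ``every subgraph with $\floor{(1-\delta)m}$ edges contains $H$'' is not monotone, so instead I would prove the following monotone statement: with probability $1-o(1)$, $\bG(n,p)$ has no set $D$ of $\delta' m$ edges whose removal leaves an $H$-free graph, for a suitable $\delta'$. The statement then transfers to $\bG(n,m)$ by the standard coupling: $\bG(n,m)\subs\bG(n,p)$ with high probability, and deleting the extra edges of $\bG(n,p)$ is itself a bounded deletion. This ``no small destroying set'' event is what I would bound.

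\textbf{Union bound.} Fix any set $D$ of $k\coleq\delta' m$ edges of $K_n$. By independence in $\bG(n,p)$,
\[\Pr\big(D\subs G\text{ and }G-D\text{ is $H$-free}\big)= p^{k}\,\Pr\big(\bG(n,p)\text{ restricted to }K_n-D\text{ is $H$-free}\big).\]
Summing over $D$, the prefactor is $\binom{N}{k}p^k\le (eNp/k)^k=(2e/\delta')^{\delta' m}=\exp\big(\delta'\log(2e/\delta')\,m\big)$. It therefore suffices to show that the second factor is at most $\exp(-c\,m)$ for a constant $c>0$ depending only on $H$. Then I would pick $\delta'$ so small that $\delta'\log(2e/\delta')<c/2$.

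\textbf{Janson step (main obstacle).} I would apply Janson's inequality to the family of copies of $H$ in $K_n-D$. Removing $D$ destroys only an $O(k/N)=o(1)$ fraction of copies, so $\mu=\Theta(n^{v(H)}p^{e(H)})$. Janson gives $\Pr(\text{no copy})\le\exp\big(-\mu^2/(2(\mu+\Delta))\big)$, and the standard computation yields
\[\frac{\mu^2}{\mu+\Delta}=\Omega\Big(\min_{H''\subs H,\;e(H'')\ge1} n^{v(H'')}p^{e(H'')}\Big).\]
For $e(H'')=1$ this minimum term is $n^2p=\Theta(m)$. For $v(H'')\ge3$ I would write
\[n^{v''}p^{e''}=n^2p\cdot\big(n^{v''-2}p^{e''-1}\big).\]
Here $e''-1\le m(H)(v''-2)$ and $p\ge n^{-1/m(H)}$, so the bracket is at least $\big(pn^{1/m(H)}\big)^{e''-1}\ge 1$ for $n$ large; this is exactly where the choice of $m(H)$ with its $(e-1)/(v-2)$ normalisation enters. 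Hence the exponent is at least $c\,m$. This is the delicate point: the density threshold must be the $2$-density rather than the ordinary density, precisely so that the single-edge term $n^2p$, which is what makes the exponent comparable to $m$, is the bottleneck. The hypothesis that $H$ contains a cycle guarantees $m(H)\ge1$, so that $p\to0$ and all the $o(1)$ corrections in $\mu$ and $\Delta$ are justified.

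Combining the two bounds, the failure probability is at most $\exp(-cm/2)=o(1)<1/3$. Translating $\delta'$ back through the coupling gives the constant $\delta$ in the statement.
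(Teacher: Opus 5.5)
\textbf{The transfer to $\bG(n,m)$ fails as written.} The rest of your argument is sound. For $p\ge n^{-1/m(H)}$ the $2$-density normalisation makes the Janson exponent $\Theta(n^2p)=\Theta(m)$. Deleting $D$ costs only an $O(k/N)$ fraction of $\mu$. The prefactor $\binom{N}{k}p^k\le (eNp/k)^k$ is absorbed once $\delta'$ is small.

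The problem is your choice $p=2m/N$. Then $\bG(n,p)$ has about $2m$ edges, so under the natural coupling $\bG(n,m)\subs\bG(n,p)$ the ``extra'' edges number about $m$, not a $\delta'$-fraction. A subgraph of $\bG(n,m)$ with $\floor{(1-\delta)m}$ edges is obtained from $\bG(n,p)$ by deleting roughly $(1+\delta)m$ edges, more than half of them. Your union bound only controls deletions of $\delta' m$ edges with $\delta'\log(2e/\delta')<c/2$, which forces $\delta'$ to be tiny.

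This cannot be fixed by adjusting constants. For non-bipartite $H$ (e.g.\ the triangle), the edges outside a maximum cut are at most half of all edges, and deleting them leaves a bipartite, hence $H$-free, graph. So the $\bG(n,p)$-statement your transfer would need is false. Monotonicity does not rescue it either: your property is increasing, so it passes from a subgraph to a supergraph, whereas here $\bG(n,m)$ is the subgraph.

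\textbf{The repair is routine.} Take $p\coleq m/N$, or slightly smaller.
\begin{itemize}
\item Under the natural coupling, $\bG(n,p)$ and $\bG(n,m)$ are then nested and differ in only $O(\sqrt m\log m)=o(\delta m)$ edges with high probability.
\item If $\bG(n,p)\subs\bG(n,m)$, a subgraph of $\bG(n,m)$ with $\floor{(1-\delta)m}$ edges contains $\bG(n,p)$ minus at most $\ceil{\delta m}$ of its edges.
\item If $\bG(n,m)\subs\bG(n,p)$, such a subgraph equals $\bG(n,p)$ minus at most $2\delta m$ edges.
\item Your $\bG(n,p)$-statement with $\delta'=2\delta$ covers both cases.
\end{itemize}
Nothing else changes: $m/N\ge 4n^{-1/m(H)}$, so the Janson exponent is still $\Omega(m)$, and the prefactor becomes $(e/\delta')^{\delta' m}$.

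Alternatively, apply Pittel's inequality $\Pr(\bG(n,m)\in\mathcal Q)=O(\sqrt m)\,\Pr(\bG(n,m/N)\in\mathcal Q)$, which holds for every graph property $\mathcal Q$. Apply it directly to the event that some $\ceil{\delta m}$ edges of $G$ meet every copy of $H$. The factor $\sqrt m$ is swallowed by $e^{-cm/2}$, and no monotonicity is needed.

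The paper itself gives no proof of this lemma; it simply cites Bednarska and \L{}uczak. So there is no proof in the paper to compare your route against.
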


The same strategy also works for the budget version, because if Breaker does not claim all possible edges in his turn and instead builds up a budget, this only decreases the probability that a turn by Maker is a failure. 

\begin{proof}[Proof of \Cref{prop: H-game thresholds} (analogous to \cite{bednarska2000biased}).]
It suffices to show that $q_{\HG}^b(n)=\Omega\big(n^{1/m(H)}\big)$.

First, we consider the case where $H$ is acyclic. If $H$ is a matching, then, as $H$ has at least two edges, $m(H)=1/2$ and Maker can win greedily in $e(H)$ moves as long as $q\le\binom{n}{2}/\big(e(H)-1\big)-2n$. If $H$ contains a path with three vertices, then $m(H)=1$ and Maker can win greedily in $e(H)$ moves if $q\le \big(n-2e(H)\big)/\big(e(H)-1\big)$. Thus, we can assume in the following that $H$ contains a cycle.

Let $0<\delta<1$ and $n_0$ be given as in \Cref{lem: Random graph fact} and assume that $q=0.1\cdot \delta n^{1/m(H)}$ and $n> n_0$. Let Maker follow \Cref{strat: random maker} and consider the game after 
\[m\coleq 2\ceil{n^{2-1/m(H)}}\le \frac{\delta}{2(q+1)}\binom n2\]
rounds.
Now, $G_M$ is an instance of $\bG(n,m)$ where some edges may be marked as failures. At each of these $m$ rounds, at most a proportion of $\delta/2$ of the edges have been claimed. Thus, at each turn of Maker, the probability that this turn will be a failure is at most $\delta/2$. Thus, the expected number of rounds that were failures is at most $\frac\delta 2m$. Therefore, Markov's inequality implies that the probability that at most $\delta m$ rounds were failures is at least $1/2$. By \Cref{lem: Random graph fact}, the probability that Maker has claimed the edges of a copy of $H$ is at least $1/6$. Thus, $q_{\HG}^b(n)=\Omega(n^{1/m(H)}\big)$.
\end{proof}

\subsection{Lower bound for the \texorpdfstring{\kfourg}{(1:q) K4-Game}}\label{sec: appendix lower bound kfourg}
Our goal is to prove the following lower bound for the \kfourg which was stated in \Cref{sec: introduction}. 

\kfourlowerbound*

It follows from a careful analysis of the proof of the asymptotic bound given by \cite{hgameforhypergraphs}. The underlying statement is stated in the context of a \hypermakerbreaker, so let us start by defining this type of game. A \emph{\hypermakerbreaker} is played by two players on a hypergraph $\CH$, in the following way: in each round, Maker first claims one of the previously unclaimed vertices in $V(\CH)$, whereafter Breaker claims $q$ of the previously unclaimed vertices of $V(\CH)$. Maker wins the game if, at some point, she has claimed all vertices of some hyperedge $e\in E(\CH)$. \hypermakerbreakers generalise \hgames. Indeed, given some graph $H$, we can model the \hgame by creating a hypergraph $\CH$ such that vertices of $\CH$ correspond to edges of $K_n$ and hyperedges of $\CH$ correspond to edge sets in $K_n$ that form a copy of $H$ in $K_n$. This implies $|V(\CH)| = |E(K_n)|$ and $|e| = |E(H)|$ for $e \in E(\CH)$. 

Before we can state the theorem from \cite{hgameforhypergraphs} that we rely on, we need to introduce some notation used by \cite{hgameforhypergraphs}. 

\begin{definition}
    For some hypergraph $\CH$, denote by $v(\CH) \coloneqq |V(\CH)|$ the number of vertices of $\CH$ and by $e(\CH)$ the number of edges of $\CH$. Furthermore, denote by $d(\CH) = e(\CH)/v(\CH)$ the density of $\CH$. 
    
    For a subset $S\subs V(\CH)$ we define $\deg(S) \coloneqq \vert\{e\in \CH: S\subs e\}\vert$ and set $\Delta_\ell(\CH) \coloneqq \max \{\deg(S):S\subs V(\CH), |S| = \ell\}$ to be the maximum $\ell$-degree of $\CH$ for any integer $\ell$. 

    Finally, if $\CH$ is $k$-uniform, we define $f(\CH) \coloneqq \min_{2\leq \ell \leq k}\left(\frac{d(\CH)}{\Delta_\ell(\CH)}\right)^{1/(\ell-1)}$.
\end{definition}

Now, we are ready to state the theorem from \cite{hgameforhypergraphs} that we will use to prove \Cref{thm: kfour lower bound}.

\begin{theorem}[\citet{hgameforhypergraphs}, Theorem 2.1]\label{thm: kusch 2.1}
    For every $k\geq 2$ and every positive $c_1\geq k$, there exist $c = c(k,c_1)> 0$ and $\overline{c} = \overline{c}(k,c_1)>0$ such that the following holds. If $\CH$ is a $k$-uniform hypergraph satisfying 
    \begin{align*}
        &\Delta_1(\CH)\leq c_1 d(\CH)\\[5pt]
        &f(\CH) > 1,\\[5pt]
        &\frac{v(\CH)}{f(\CH)}\left(1-\frac{1}{f(\CH)}\right) \geq \overline{c},
    \end{align*}
    then Maker has a winning strategy in the {\upshape\hypermakerbreaker} on $\CH$ provided {$q\leq cf(\CH)-1$}.
\end{theorem}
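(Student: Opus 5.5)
The plan is to follow the random-strategy approach of \cite{bednarska2000biased}, in the form of \Cref{strat: random maker}, transferred from graphs to the $k$-uniform hypergraph $\CH$. Maker samples vertices of $\CH$ uniformly at random among those she has not yet sampled. A round counts as a \emph{failure} if the sampled vertex is already claimed by Breaker. We run the game for $m$ rounds with $m\coleq\ceil{Kv(\CH)/f(\CH)}$, where $K=K(k,c_1)$ is a large constant. Let $R$ be the set of sampled vertices; it is a uniformly random $m$-subset of $V(\CH)$, and we write $p\coleq m/v(\CH)\approx K/f(\CH)$. Since Maker\textquoteright s strategy is randomised and the game is determined, it suffices to show that Maker wins with positive probability against every Breaker strategy. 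The condition $f(\CH)>1$ gives $p\le 1$ once we use $\min(K/f,1/2)$ where needed. The third hypothesis gives $m\ge K\overline{c}$, so $m$ is large.

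\textbf{Step 1 (failures are rare).} After $m$ rounds Breaker has claimed at most $qm\le cf(\CH)\cdot Kv(\CH)/f(\CH)=cKv(\CH)$ vertices. Choosing $c\le \delta/(4K)$, at every round at most a $\delta/2$ fraction of the still-unsampled vertices is Breaker\textquoteright s (the sampled part is also small, since $p$ is small after shrinking $K$ relative to the bound). Hence the expected number of failures is at most $\delta m/2$, and by Markov\textquoteright s inequality at most $\delta m$ failures occur with probability at least $1/2$. On this event Maker owns a subset of $R$ of size at least $(1-\delta)m$.

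\textbf{Step 2 (robust containment lemma).} This is the main work. I will show that with probability at least $2/3$, every $S\subs R$ with $\abs{S}\le\delta m$ leaves a hyperedge inside $R\setminus S$. Let $X$ be the number of hyperedges inside $R$, so that $\mathbb E X\approx e(\CH)p^k=d(\CH)v(\CH)p^k$. Its variance is a sum over $\ell$ of pairs of hyperedges sharing $\ell$ vertices, which is at most $\sum_{\ell=1}^{k} e(\CH)\Delta_\ell(\CH)p^{2k-\ell}$. The $\ell=1$ term uses $\Delta_1\le c_1d$, and for $\ell\ge2$ we use $\Delta_\ell\le d f^{-(\ell-1)}$. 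This gives
\[
\frac{\operatorname{Var}X}{(\mathbb E X)^2}\lesssim \frac{1}{v p}\Big(c_1+\sum_{\ell\ge2}(fp)^{-(\ell-1)}\Big)\lesssim \frac{c_1+1}{K\,v/f},
\]
which is small because $v(\CH)/f(\CH)\ge\overline{c}$ (the factor $1-1/f$ handles the regime $f$ close to $1$). Chebyshev then yields $X\ge\frac12\mathbb E X$. To control deletions, let $Y\coleq\sum_{x\in R}\deg_R(x)^2$. Expanding $\mathbb E Y$ again only involves pairs of hyperedges sharing $\ell\ge1$ vertices, so by the same estimates $\mathbb E Y\lesssim (c_1d p^{k-1})^2\,vp\cdot C(k)$. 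By Markov, $Y\le 10\,\mathbb E Y$. For any $S$ with $\abs S\le\delta m$, Cauchy--Schwarz bounds the number of hyperedges meeting $S$ by
\[
\sum_{x\in S}\deg_R(x)\le\sqrt{\delta m\,Y}\lesssim \sqrt{\delta}\,c_1\,d\,v\,p^k,
\]
which is less than $\frac12\mathbb E X$ when $\delta=\delta(k,c_1)$ is small. Sampling without replacement versus independent $p$-sampling only changes the moments by constant factors, which I will handle by direct computation or by the standard coupling.

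\textbf{Step 3 (combining).} The fixed order of constants is: first $K$ and $\delta$ are chosen depending on $k$ and $c_1$, then $\overline{c}$, and finally $c$. With probability at least $1/2+2/3-1>0$ both events hold, and Maker\textquoteright s claimed set then contains a hyperedge. I expect the main obstacle to be the second-moment bound for $Y$, in particular the bookkeeping that ensures every overlap pattern is controlled by the $\Delta_\ell$ via the definition of $f(\CH)$.
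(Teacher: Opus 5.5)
The paper does not prove this theorem. It imports it from \citet{hgameforhypergraphs} and only records how the constant arises there: $c=x/4$ with $x(1-\ln x)<(c_1 2^{k+2})^{-1}$. That condition has the shape $\binom{N}{xN}\le e^{x(1-\ln x)N}$, which suggests the cited argument uses an entropy/union-bound estimate over exponentially many sets.

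Your proof is correct and takes a different, more elementary route. You use the Bednarska--{\L}uczak random strategy, Chebyshev for the number $X$ of hyperedges inside $R$, and a deletion step via Cauchy--Schwarz and Markov applied to $Y=\sum_{x\in R}\deg_R(x)^2$. The computations go through:
\begin{itemize}
\item disjoint pairs are negatively correlated for a uniform $m$-set;
\item each overlap class $\ell\ge2$ is controlled by $\Delta_\ell\le d f^{-(\ell-1)}$ together with $fp\ge K\ge1$;
\item the class $\ell=1$ is controlled by $\Delta_1\le c_1 d$.
\end{itemize}
This gives $\operatorname{Var}X/(\mathbb E X)^2=O_{k,c_1}(1/m)$ and $\mathbb E Y=O_k(c_1)\,d^2vp^{2k-1}=O_k(c_1)(\mathbb E X)^2/m$, which is exactly what the deletion step needs.

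Your route is self-contained and transparent. It uses only $v/f\ge\overline{c}$; the factor $1-1/f$ plays no role. The constants $\delta$, $\overline c$, $c$ can be tracked explicitly from $k$, $c_1$ and the Markov/Chebyshev slack, instead of being read off someone else's proof as the paper does before specialising to $K_4$. The cited route's advantage is simply that it is already published with a concrete admissible $c$, which is all the paper needs.

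Two small clean-ups:
\begin{itemize}
\item The remark about using $\min(K/f,1/2)$ is unnecessary. Since $q\ge0$ and $q\le cf-1$ force $f\ge1/c$, choosing $c$ last makes $p=m/v\le Kc+1/v$ automatically small. Also $m(q+1)\le cKv+cf<v$ guarantees the game lasts $m$ rounds.
\item $\mathbb E Y$ is linear in $c_1$, since only the $\ell=1$ term uses $\Delta_1$. Moreover $K=1$ already suffices, because the largeness of $m$ comes from $\overline c$.
\end{itemize}
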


We want to state $c$ explicitly. In Section 2.1 of \cite{hgameforhypergraphs}, $c$ is defined to be $c = (1-\delta)/4> 0$ for some carefully chosen $\delta < 1$. After the statement of Lemma 2.5 in \cite{hgameforhypergraphs}, they set $\delta \in (1/2,1)$ to be such that $(1-\delta)(1-\ln(1-\delta)) < c'/4$ for some fixed $c'$. Finally, shortly before stating Lemma 2.5 in \cite{hgameforhypergraphs}, they set $c' = 1/(c_12^k)$. Hence, \ref{thm: kusch 2.1} is true for any $c$ that equals $\frac{x}{4}$ for some $x \in (0,1/2)$ fulfilling $x(1-\ln x)< (c_12^{k+2})^{-1}$.\bigskip

Next, we want to rewrite the statement in the context of \hgames. First, we adapt the notation. 

\begin{definition}
    For some fixed graph $H$ and a fixed $n$, let $\mu(H,n)$ denote the number of copies of $H$ in~$K_n$. Set $d(H,n) = \frac{2\mu(H,n)}{n(n-1)}$. Furthermore, for some $S\subs E(K_n)$ let $\mu(H,n,S)$ denote the number of copies of $H$ in $K_n$ that contain $S$. With this, we can define $\Delta_\ell(H,n) \coloneqq \max\{\mu(H,n,S): S\subs E(K_n), |S| = \ell\}$ and finally 
    \begin{equation*}
        f(H,n) \coloneqq \min_{2\leq \ell \leq |E(H)|} \left(\frac{d(H,n)}{\Delta_\ell(H,n)}\right)^{1/(\ell-1)}.
    \end{equation*} 
\end{definition}
It is easy to verify that this notation is compatible with our reduction of a \hgame to a \hypermakerbreaker. For this, also note that the constructed hypergraph $\CH$ is $|E(H)|$-uniform. 

Furthermore, note that $\overline{c}$ in the statement of \Cref{thm: kusch 2.1} only depends on $k = |E(H)|$ and $c_1$ and therefore, in our reduction, does not depend on $n$. All of this allows us to rewrite the theorem in the context of \hgames in the following way.

\begin{theorem}\label{thm: kusch with H}
    Fix $H$, $c \geq |E(H)|$ and $N\in \mathbb{N}$ and assume they satisfy 
    \begin{align}
        &\Delta_1(H,n) \leq c\cdot d(H,n) & \text{for all } n\geq N\label{eq: thm kusch 1}\\[5pt]
        &f(H,n) > 1 & \text{for all } n\geq N\label{eq: thm kusch 2}\\[5pt]
        &\frac{n^2}{f(H,n)}\left(1-\frac{1}{f(H,n)}\right) \stackrel{n\to\infty}{\to}\infty.\label{eq: thm kusch 3}
    \end{align}
    Fix $0<x<\frac{1}{2}$ which satisfies $x(1-\ln x) < \left(c\cdot 2^{|E(H)|+2}\right)^{-1}$. Then Maker has a winning strategy in the \hgame provided $q \leq \frac{x}{4} f(H,n) - 1$ and $n$ is large enough.
\end{theorem}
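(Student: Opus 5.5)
This is a direct translation of \Cref{thm: kusch 2.1} through the reduction of the \hgame to a \hypermakerbreaker described above. Fix $H$ and let $\CH_n$ be the hypergraph whose vertex set is $E(K_n)$ and whose hyperedges are the edge sets of copies of $H$ in $K_n$. The game is then $k$-uniform with $k=|E(H)|$, and the Maker--Breaker game on $\CH_n$ coincides with the \hgame on $K_n$: claiming a vertex of $\CH_n$ is claiming an edge of $K_n$, and the winning sets agree. I apply \Cref{thm: kusch 2.1} with $c_1\coleq c$, which is allowed since the hypothesis $c\ge|E(H)|=k$ is exactly the requirement $c_1\ge k$.

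First I check that the parameters match. We have $v(\CH_n)=\binom n2=n(n-1)/2$ and $e(\CH_n)=\mu(H,n)$, so $d(\CH_n)=2\mu(H,n)/(n(n-1))=d(H,n)$. For $S\subs V(\CH_n)=E(K_n)$, $\deg(S)=\mu(H,n,S)$, so $\Delta_\ell(\CH_n)=\Delta_\ell(H,n)$ for every $\ell$, and hence $f(\CH_n)=f(H,n)$. Consequently, \eqref{eq: thm kusch 1} gives $\Delta_1(\CH_n)\le c_1 d(\CH_n)$ and \eqref{eq: thm kusch 2} gives $f(\CH_n)>1$ for all $n\ge N$.

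For the third hypothesis, note that $v(\CH_n)/f(\CH_n)\cdot(1-1/f(\CH_n))=\frac{n(n-1)}{2n^2}\cdot\frac{n^2}{f(H,n)}(1-1/f(H,n))$. The prefactor tends to $1/2$ and the rest tends to infinity by \eqref{eq: thm kusch 3}. The constant $\overline c=\overline c(k,c_1)$ is independent of $n$, so for all sufficiently large $n$ the expression is at least $\overline c$. Thus every hypothesis of \Cref{thm: kusch 2.1} holds for large $n$.

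Finally, the constant $c$ of \Cref{thm: kusch 2.1} has to be made explicit. By the discussion following \Cref{thm: kusch 2.1}, the theorem holds with constant $x/4$ for any $x\in(0,1/2)$ with $x(1-\ln x)<(c_1 2^{k+2})^{-1}$, and this is exactly our assumption on $x$. Hence Maker wins on $\CH_n$, and therefore in the \hgame, whenever $q\le \frac x4 f(H,n)-1$ and $n$ is large enough.

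The only step needing care is this last one: justifying that the proof of \cite{hgameforhypergraphs} works for any such $x$ (i.e.\ $\delta=1-x\in(1/2,1)$ with the stated inequality), rather than for one specific choice. This is a faithful reading of their constants rather than a new argument. Everything else is bookkeeping.
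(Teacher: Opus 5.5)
Your proposal is correct and follows the paper's route. The paper also obtains this theorem by translating \Cref{thm: kusch 2.1} through the edge-to-vertex reduction with $c_1=c$, noting that $\overline{c}$ does not depend on $n$, and reading off from the constants in \cite{hgameforhypergraphs} that any $x/4$ with $x\in(0,1/2)$ and $x(1-\ln x)<(c\,2^{|E(H)|+2})^{-1}$ is admissible; your explicit matching of $d$, $\Delta_\ell$, $f$, and of the factor between $v(\CH_n)=\binom n2$ and $n^2$ in the third condition, fills in what the paper only calls easy to verify.
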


Now, we are ready to prove \Cref{thm: kfour lower bound}. 

\begin{proof}[Proof of \Cref{thm: kfour lower bound}.]
    We apply \Cref{thm: kusch with H} with $H = K_4$, $c = 6$ and $x = 5\cdot10^{-5}$. It remains to verify the conditions in \Cref{thm: kusch with H} and that it gives the desired bound. 

    It holds that $\mu(K_4,n) = \binom{n}{4}$ and therefore $d(H,n)=\frac{\binom{n}{4}}{\binom{n}{2}} = \frac{(n-2)(n-3)}{12}$. Every fixed edge $\{x,y\}\in E(K_n)$ is contained in a $K_4$ by choosing two further vertices; therefore $\Delta_1(K_4,n) = \binom{n-2}{2}$. Let $S\coleq \{e,f\}\subs E(K_n)$ be an edge set of size~2. If $e$ and $f$ are not adjacent, then $\mu(K_4,n,S) = 1$; otherwise $\mu(K_4,n,S) = n-3$ since the missing vertex of a $K_4$ can be chosen in $n-3$ ways. Hence, $\Delta_2(K_4,n) = n-3$. Similarly $\Delta_3(K_4,n) = n-3$. Here, the maximum occurs when the three edges in $S$ form a triangle; then the fourth vertex of the $K_4$ can again be chosen in $n-3$ ways.
    
    Finally,
    \begin{equation*}
        \Delta_4(K_4,n) = \Delta_5(K_4,n) = \Delta_6(K_4,n) = 1.
    \end{equation*}
     Indeed, any set of at least four edges, if contained in a $K_4$ at all, determines that $K_4$ uniquely.

     This allows us to verify Condition (\ref{eq: thm kusch 1}). Indeed, 
     \begin{equation*}
         \Delta_1(K_4,n) = \binom{n-2}{2} = \frac{(n-2)(n-3)}{2} = 6\frac{(n-2)(n-3)}{12} = c\cdot d(K_4,n).
     \end{equation*}

     Next, we compute
     \begin{equation*}
         f(K_4,n) = \min\left\{\frac{d(K_4,n)}{n-3}, \left(\frac{d(K_4,n)}{n-3}\right)^{1/2}, d(K_4,n)^{1/3}, d(K_4,n)^{1/4}, d(K_4,n)^{1/5}
        \right\}.
     \end{equation*}
     Since $d(K_4,n) = \Theta(n^2)$, the minimum is attained, for all sufficiently large $n$, by the last term. Hence,
     \begin{equation*}
         f(K_4,n) = \left(\frac{(n-2)(n-3)}{12}\right)^{1/5} \geq 32^{-1/5}n^{2/5} = \frac{1}{2}n^{2/5},
     \end{equation*}
     for $n$ large enough. Therefore $f(H,n) \to \infty$ as $n$ increases and the same holds for
     \begin{equation*}
         \frac{n^2}{f(H,n)}\left(1-\frac{1}{f(H,n)}\right) = \Theta(n^{8/5}).
     \end{equation*}
     Therefore, Conditions (\ref{eq: thm kusch 2}) and (\ref{eq: thm kusch 3}) are also fulfilled for large enough $n$. Finally, for $x = 5\cdot10^{-5}$, 
     \begin{equation*}
         x(1-\ln x) < 5.5\cdot 10^{-4} < 6.5\cdot 10^{-4} < \frac{1}{1536} = \frac{1}{6\cdot 2^{8}} = \frac{1}{c\cdot 2^{|E(K_4)|+2}}. 
     \end{equation*}
     Hence, for $n$ large enough and $q\leq 6\cdot 10^{-6}n^{2/5}$, we have 
     \begin{equation*}
         q < \frac{5}{8}\cdot 10^{-5}n^{2/5} -1 \leq \frac{x}{4}f(H,n)-1.
     \end{equation*}
     Thus, by \cref{thm: kusch with H}, Maker has a winning strategy in the \kfourg for $q\leq 6\cdot 10^{-6}n^{2/5}$.
\end{proof}